\documentclass[12pt]{amsart}
\usepackage{amssymb, latexsym, amsmath, amsfonts}
\usepackage{verbatim}
\newtheorem{thm}{Theorem}[section]
\newtheorem{cor}[thm]{Corollary}
\newtheorem{lem}[thm]{Lemma} 
\newtheorem{prop}[thm]{Proposition}
\theoremstyle{definition} 

\theoremstyle{remark}
\newtheorem{rem}[thm]{Remark}
\numberwithin{equation}{section}

\setlength{\oddsidemargin}{0in} \setlength{\evensidemargin}{0in}
\setlength{\textwidth}{6.3in} \setlength{\topmargin}{-0.2in}
\setlength{\textheight}{9.0in}

\newcommand{\mbb}{\mathbb}
\newcommand{\ra}{\rightarrow}
\newcommand{\ze}{\zeta}
\newcommand{\pa}{\partial}
\newcommand{\ov}{\overline}
\newcommand{\sm}{\setminus}
\newcommand{\ep}{\epsilon}
\newcommand{\no}{\noindent}
\newcommand{\Om}{\Omega}
\newcommand{\cal}{\mathcal}
\newcommand{\ti}{\tilde}

\newcommand{\al}{\alpha}
\newcommand{\be}{\beta}
\newcommand{\ga}{\gamma}
\newcommand{\de}{\delta}

\newcommand{\De}{\Delta}
\newcommand{\Ga}{\Gamma}
\newcommand{\om}{\omega}

\begin{document}
\title{Some regularity theorems for CR mappings}
\keywords{CR map, finite type, scaling method, Segre varieties}
\thanks{The second author was supported in part by a grant from the UGC under DSA-SAP (Phase IV) and 
the DST SwarnaJayanti Fellowship 2009-2010}
\subjclass{Primary: 32H40 ; Secondary : 32Q45, 32V10}
\author{G. P. Balakumar}
\address{Department of Mathematics,
Indian Institute of Science, Bangalore 560 012, India}
\email{gpbalakumar@math.iisc.ernet.in}
\author{Kaushal Verma}
\address{Department of Mathematics,
Indian Institute of Science, Bangalore 560 012, India}
\email{kverma@math.iisc.ernet.in}

\begin{abstract}
The purpose of this article is to study Lipschitz CR mappings from an $h$-extendible (or semi-regular) hypersurface in $\mbb C^n$. Under various assumptions 
on the target hypersurface, it is 
shown that such mappings must be smooth. A rigidity result for proper holomorphic mappings from strongly pseudoconvex domains is also proved.
\end{abstract}

\maketitle

\section{Introduction}

\no The purpose of this note is to prove regularity results for Lipschitz CR mappings from $h$-extendible hypersurfaces in $\mbb C^n$. 

\begin{thm}
Let $f : M \ra M'$ be a non-constant Lipschitz CR mapping between $C^{\infty}$ smooth pseudoconvex finite type (in the sense of D'Angelo) hypersurfaces in $\mbb C^n$. 
Let $p \in M$ and $p'= f(p) \in M'$. Assume that $M$ is $h$-extendible at $p$, and that there is an open neighbourhood $U' \subset \mbb C^n$ of $p'$ and a 
$C^{\infty}$ smooth defining function $r'(z')$ for $U'\cap M'$ in $U'$ which is plurisubharmonic on the pseudoconvex side of $M'$ near $p'$.
If the Levi rank of $M'$ at $p'$ is at least $n - 2$ then $f$ is $C^{\infty}$ smooth in a neighbourhood of $p$. 
\end{thm}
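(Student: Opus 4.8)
\no The plan is to realise $f$ as the boundary value of a holomorphic map, to study that map near $p$ by Pinchuk's scaling method --- which will apply on \emph{both} sides because the Levi rank hypothesis forces $M'$ to be $h$-extendible at $p'$ as well --- and finally to push the smoothness of the resulting limit map between the two weighted homogeneous model hypersurfaces back down to $f$ via weighted stability estimates and the reflection (Segre variety) principle. For the extension: since $M$ is pseudoconvex of finite type, every CR function on $M$ extends holomorphically to the pseudoconvex side $\Om$ of $M$ near $p$, so componentwise $f$ extends to a holomorphic $F\colon\Om\ra\mbb C^n$, and since $f$ is Lipschitz the extension is Lipschitz up to $M$ (the Baouendi--Treves approximants, or the explicit one-sided extension formula, carry over the boundary modulus of continuity). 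Because $r'$ is plurisubharmonic on the pseudoconvex side of $M'$, the function $r'\circ F$ is plurisubharmonic on $\Om$, continuous up to $M$, and vanishes there; finite type makes $\Om$ locally hyperconvex, so the maximum principle gives $r'\circ F\le 0$, i.e.\ $F(\Om)\subset\Om'$, the pseudoconvex side of $M'$. Finally $M$ is minimal, so a non-constant CR map has generic rank $n$ and $\det F'\not\equiv 0$; after shrinking neighbourhoods $F$ is proper onto a one-sided neighbourhood $D'$ of $p'$ in $\Om'$, with $F(M)\subset M'$.

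\no \emph{$M'$ is $h$-extendible at $p'$.} As the Levi form of $M'$ at $p'$ has rank $\ge n-2$, there are coordinates centred at $p'$ in which $M'$ is $\{\mathrm{Re}\,w_n+|w_1|^2+\cdots+|w_{n-2}|^2+P(w_{n-1},\ov w_{n-1})+(\text{higher weighted order})=0\}$, where $P$ is a subharmonic homogeneous polynomial of degree $2l$ (the type of $M'$ at $p'$ in its single degenerate direction), with no harmonic terms. Hence the Catlin multitype at $p'$ is $(1,\dots,1,2l)$, it coincides with the D'Angelo type, and the weighted homogeneous model $\cal{M}'_\infty=\{\mathrm{Re}\,w_n+|w_1|^2+\cdots+|w_{n-2}|^2+P(w_{n-1},\ov w_{n-1})<0\}$ is plurisubharmonic and of finite type; so $M'$ is $h$-extendible at $p'$, and $\cal{M}'_\infty$ is a model (``WB'') domain --- taut, complete Kobayashi hyperbolic, with real-analytic finite-type boundary, carrying the anisotropic dilations $\La'_\de(w)=(\de^{-1/2}w_1,\dots,\de^{-1/2}w_{n-2},\de^{-1/(2l)}w_{n-1},\de^{-1}w_n)$. (This is the only place the Levi rank hypothesis is used essentially.)

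\no \emph{Scaling and the limit map.} Choose $q_j\ra p$ along the inner normal to $M$, put $\de_j:=-r'(F(q_j))\downarrow0$, and let $\La_j$ be Catlin's dilations at $p$ adapted to the (realised, by $h$-extendibility) multitype of $M$. A two-sided Hopf-type comparison $|r(z)|\approx\mathrm{dist}(z,M)\approx\mathrm{dist}(F(z),M')\approx|r'(F(z))|$ near $p$ --- whose lower bound is the delicate point, coming from properness and non-degeneracy of $F$ together with the subelliptic finite-type estimates on the target --- guarantees that $F$ carries the $\La_j$-base points into a fixed compact subset of $\cal{M}'_\infty$. With translations $\tau_j,\tau'_j$ normalising $q_j\mapsto0$ and $F(q_j)\mapsto$ base point, set $F_j:=\La'_{\de_j}\circ\tau'_j\circ F\circ\tau_j^{-1}\circ\La_j^{-1}$; then $\La_j\tau_j(\Om)\ra\cal{M}_\infty$ (the $h$-extendible model of $M$ at $p$) and $\La'_{\de_j}\tau'_j(D')\ra\cal{M}'_\infty$ in the local Hausdorff sense. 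Tautness of $\cal{M}'_\infty$ and the comparison above make $\{F_j\}$ a normal family; a subsequential limit $\hat F\colon\cal{M}_\infty\ra\ov{\cal{M}'_\infty}$ is holomorphic, and the comparison applied to $\hat F$ shows it is non-constant, maps $\cal{M}_\infty$ into $\cal{M}'_\infty$, and has generic rank $n$, hence is proper onto its image. Since both model boundaries are real-analytic of finite type and $\pa\cal{M}'_\infty$ has Levi rank $\ge n-2$ (so it is finitely nondegenerate and its Segre map is generically immersive), the reflection principle for such hypersurfaces shows that $\hat F$ extends holomorphically past $\pa\cal{M}_\infty$ near the distinguished point.

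\no \emph{Transfer, and the main difficulty.} The holomorphic extension of $\hat F$ across $\pa\cal{M}_\infty$, combined with the local uniform convergence $F_j\ra\hat F$ and the explicit affine/dilation form of $\La_j,\La'_j,\tau_j,\tau'_j$, yields \emph{uniform} weighted derivative bounds for the $F_j$ on compacta; un-scaling these produces weighted estimates $|D^\al F(z)|\le C_\al\,|r(z)|^{-w_\al}$ for the original $F$ near $p$, the weights $w_\al$ dictated by the multitype of $M$. These stability estimates are exactly what is needed to bootstrap the tangential Cauchy--Riemann (equivalently, the reflection) identities satisfied by $F$ up to $C^\infty$ regularity on $\ov\Om$ near $p$; restricting to $M$ gives $f\in C^\infty$ near $p$. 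I expect the principal obstacle to be the lower half of the Hopf-type comparison in the scaling step --- the assertion that $F$ does not flatten $M$ onto $M'$ --- since $F$ is not assumed invertible and this must be extracted from properness, non-degeneracy and the finite-type (subelliptic) estimates on the target; granting that together with the $h$-extendibility of $M'$, the scaling gymnastics are routine bookkeeping and the final transfer follows the now-standard stability-estimate template.
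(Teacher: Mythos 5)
Your proposal breaks down at the very first structural step: the assertion that, because $f$ is non-constant and $M$ is minimal, the holomorphic extension has $\det F'\not\equiv 0$ and is, ``after shrinking neighbourhoods, proper onto a one-sided neighbourhood $D'$ of $p'$.'' Neither claim is justified, and the second one is precisely the whole content of the theorem's proof rather than a free preliminary. Non-constancy does not give generic rank $n$, and even generic rank $n$ would not give local properness up to the boundary: properness of $F\colon D\ra D'$ for suitably chosen one-sided neighbourhoods is equivalent (by the Bell--Catlin construction) to the compactness of the boundary fibre $f^{-1}(p')$ in $M$ near $p$, and there is no soft argument for that. The paper's entire machinery is aimed at exactly this point: it scales $(D,D',f)$, proves normality of the scaled maps via the attraction property of analytic discs in Levi corank one boundaries, establishes a \emph{stable} lower bound for the Kobayashi metric on the scaled targets (Theorem 2.3) in order to get uniform H\"older continuity of the scaled maps up to the boundary near $0$, shows the limit $F\colon D_\infty\ra D'_\infty$ is non-degenerate and H\"older continuous up to $\pa D_\infty$, hence algebraic by Webster (after ruling out that $F$ sends an open piece of $\pa D_\infty$ into the weakly pseudoconvex locus of $\pa D'_\infty$), and then uses the invariance of Segre varieties to show $F$ is locally finite-to-one at $0$ --- which contradicts the noncompactness of the fibre that would be inherited from a noncompact $f^{-1}(p')$. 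Once fibre compactness is in hand, properness holds and the $C^\infty$ smoothness of $f$ is quoted from Bell--Catlin; no further ``transfer'' is needed.

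Two further points. First, your ``delicate'' lower half of the boundary-distance comparison is in fact the easy half here and is exactly where the hypothesis on $r'$ enters: $r'\circ f$ is a negative plurisubharmonic function on $D$, so the Hopf lemma gives ${\rm dist}(z,\pa D)\lesssim{\rm dist}(f(z),\pa D')$, while the Lipschitz hypothesis gives the reverse inequality; deriving it from ``properness and subelliptic estimates'' is circular given that properness is what must be proved. Second, your final step --- unscaling derivative bounds of the limit map into weighted estimates $|D^\al F(z)|\le C_\al|r(z)|^{-w_\al}$ and ``bootstrapping the reflection identities'' to $C^\infty$ regularity of $f$ --- is not a valid argument as stated: uniform bounds for the scaled maps on compacta control $F$ only along the scaling base points and do not by themselves yield boundary smoothness of the original map on a smooth (non-algebraic) finite type hypersurface; in this setting the regularity must come from the Bell--Catlin theorem for the proper extension, which is why the fibre-compactness step cannot be bypassed.
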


\no This theorem, which is purely local in nature, is motivated by the regularity and rigidity results for CR mappings obtained in \cite{CS}, \cite{CGS}, \cite{CPS1} and \cite{G}. Let $B(p, \ep), B(p', \ep')$ be small open balls around $p, p'$ respectively. The pseudoconvex side of $M$ near $p$ is that component of $B(p, \ep) \sm M$ which is a
pseudoconvex domain for small $\ep > 0$. Denote this by $B^-(p, \ep)$ while the other component (the pseudoconcave part) will be denoted by $B^+(p, \ep)$. The same practice
will be followed while referring to the respective components of the complement of $M'$ in $B(p', \ep')$. Since $M$ and $M'$ are both pseudoconvex and of finite type near
$p$ and $p'$ respectively, it follows that $f$ admits a holomorphic extension to the pseudoconvex side of $M$ near $p$ and
this extension (which will still be denoted by $f$) maps the pseudoconvex side of $M$ near $p$ into the pseudoconvex side of $M'$ near $p'$. In fact, since $f : M \ra M'$ is 
assumed to be Lipschitz, it follows that the extension is also Lipschitz on $B^-(p, \ep) \cup M$ near $p$. Fix $\ep, \ep'> 0$ so that  
\[
f : B^-(p, \ep) \ra  B^-(p', \ep')
\]
is holomorphic, $f$ extends Lipschitz continuously up to $M \cap B(p, \ep)$ and satisfies $f(M \cap B(p, \ep)) \subset M' \cap B(p', \ep')$. For brevity, let $D$ and $D'$ denote 
the pseudoconvex domains $B^-(p, \ep)$ and $B^-(p', \ep')$ respectively so that $M \cap B(p, \ep)$ and $M' \cap B(p', \ep')$ are smooth open finite type pieces on their
boundaries. We may also assume that $\ep'> 0$ is small enough so that the defining function $r'(z') \in C^{\infty}(B(p', \ep'))$ and $D'= \{ r'(z') < 0 \}$ where $r'(z')$ is 
plurisubharmonic in $D'$. Note that while this condition holds for strongly pseudoconvex and convex finite type domains in $\mbb C^n$, not all (see \cite{DF}) 
pseudoconvex domains satisfy this condition. Now observe that the map $f : D \ra D'$ is not known to be proper though it does extend continuously up to $M \cap B(p, \ep)$. A 
sufficient condition for $f$ to be proper from $D$ onto $D'$ was considered by Bell-Catlin in \cite{BC} -- namely, if $f^{-1}(p') = f^{-1}(f(p))$ is compact in $M \cap B(p, \ep)$ 
then it is possible to choose $D, D'$ in such a way that $f$ extends as a proper holomorphic mapping between these domains. Once this is established, the $C^{\infty}$ 
smoothness of $f$ is a consequence of the techniques in \cite{BC}. The proof of theorem 1.1 therefore consists of showing that the fibre over $p'$ is compact in 
$M \cap B(p, \ep)$. To do this, we adapt the method of uniform scaling from \cite{CS},\cite{CGS} and \cite{CPS1}. The domains $D, D'$ and the map $f$ are scaled by composing $f$ 
with a suitable family of automorphisms of $\mbb C^n$ that enlarge these domains near $p, p'$ respectively. This produces a scaled family $f^{\nu} : D_{\nu} \ra D'_{\nu}$ 
of maps between scaled domains. The scaled domains $D_{\nu}$ converge in the Hausdorff sense to a model domain of the form
\begin{equation}
D_{\infty} = \big\{ z \in \mbb C^n : 2 \Re z_n + P('z, ' \ov z) < 0 \big\}
\end{equation}
where $'z = (z_1, z_2, \ldots, z_{n - 1})$ and $P('z, '\ov z)$ is a weighted homogeneous (the weights being determined by the Catlin multitype of $\pa D$ at $p$) real valued 
plurisubharmonic polynomial without pluriharmonic terms of total weight $1$, while the domains
$D'_{\nu}$ converge to 
\begin{equation}
D'_{\infty} =  \big\{ z \in \mbb C^n : 2 \Re z_n + Q_{2m'}(z_1, \ov z_1) + \vert z_2 \vert^2 + \ldots + \vert z_{n - 1} \vert^2 < 0 \big\}.
\end{equation}
Here $2m'$ is the type of $\pa D'$ at $p'$ and  $Q_{2m'}(z_1, \ov z_1)$ is a subharmonic polynomial of degree at most $2m'$ without harmonic terms. $D'_{\infty}$ is manifestly of 
finite type while $D_{\infty}$ also has the same property since $p$ is assumed to be $h$-extendible. The local geometry of a smooth pseudoconvex finite type hypersurface whose 
Levi rank is at least $n - 2$ guarantees the existence of special 
polydiscs around points sufficiently close to the hypersurface on which the defining function does not change by more than a prescribed 
amount -- these are the analogues of Catlin's bidiscs and have been considered earlier in \cite{Ch3}. The holomorphic mappings $f_{\nu} : D_{\nu} \ra D'_{\nu}$ are shown to form a 
normal family by using these polydiscs as in \cite{TT}; the limit map $F : D_{\infty} \ra D'_{\infty}$ is therefore holomorphic. The assumptions that $f$ is Lipschitz and that 
the defining function of $\pa D'$ is plurisubharmonic on $D'$ force $F$ to be non-degenerate. Furthermore, since $f$ extends Lipschitz continuously up to 
$M$ near $p$, it is natural to expect that $F$ imbibes some regularity near $0 \in \pa D_{\infty}$. This indeed happens and it is possible to show that $F$ is H\"{o}lder 
continuous up to $\pa D_{\infty}$ near the origin. The main ingredient needed to do this is a stable rate of blow up of the Kobayashi metric on $D'_{\nu}$ for all 
$\nu \gg 1$ and this follows by analyzing the behaviour of analytic discs in $D'_\nu$ whose centers lie close to the origin. In particular, theorem 2.3 provides a stable lower bound for the Kobayashi metric in $D'_\nu$ near the origin using ideas from \cite{TT}. Once $F$ is known to be H\"{o}lder continuous up to 
$\pa D_{\infty}$ near the origin, Webster's theorem (\cite{W}) implies that $F$ must be algebraic. Moreover, if $f$ has a noncompact fibre over $p'$, it can be shown that 
the same must hold for that of $F$ over $F(0)$. This violates the invariance property of Segre varieties associated to $\pa D_{\infty}$ and $\pa D'_{\infty}$ and proves the 
compactness of the fibre of $f$ over $p' \in \pa D'$. 

\medskip

It is known that smooth convex finite type hypersurfaces can also be scaled and morever the stability of the Kobayashi metric on the scaled domains is also understood (see 
\cite{CGS}). Hence the same line of reasoning yields the following:

\begin{cor}
With $M$ as in theorem 1.1, let $M' \subset \mbb C^n$ be a smooth convex finite type hypersurface and $f : M \ra M'$ a Lipschitz CR mapping. As before, let $p \in M$ and $p'= 
f(p) \in M'$. Then $f$ is $C^{\infty}$ smooth in a neighbourhood of $p$.
\end{cor}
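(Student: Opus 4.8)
The plan is to follow the proof of theorem 1.1 line by line, substituting for each ingredient that relied on the Levi rank hypothesis on $M'$ the corresponding fact about smooth convex finite type hypersurfaces. As in theorem 1.1, the Bell--Catlin criterion \cite{BC} reduces the corollary to showing that the fibre $f^{-1}(p')$ is compact in $M \cap B(p,\ep)$: granting this, one may choose $D = B^-(p,\ep)$ and $D' = B^-(p',\ep')$ so that $f$ extends to a proper holomorphic map $D \ra D'$, and then $f \in C^\infty$ near $p$ by the techniques of \cite{BC}. So I would suppose, for a contradiction, that $f^{-1}(p')$ is not compact in $M \cap B(p,\ep)$.

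Next I would run the uniform scaling. The source side is unchanged from theorem 1.1: since $p$ is $h$-extendible, dilating $D$ by automorphisms of $\mbb C^n$ yields domains $D_\nu$ converging in the Hausdorff sense to the finite type model $D_\infty$ of (1.1). On the target side, $M'$ being smooth convex of finite type, I invoke the convex scaling of \cite{CGS}: composing $f$ with a suitable family of affine automorphisms dilating $D'$ near $p'$ produces domains $D'_\nu$ converging to a convex model domain $D'_\infty$ that is again of finite type, together with a normal family bound for the scaled maps $f_\nu : D_\nu \ra D'_\nu$. Passing to a subsequence, $f_\nu \ra F : D_\infty \ra D'_\infty$ with $F$ holomorphic. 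Since $f$ is Lipschitz and a convex finite type domain carries a defining function that is plurisubharmonic on it, the reasoning of theorem 1.1 shows that $F$ is non-degenerate; and the stable blow-up of the Kobayashi metric on $D'_\nu$ near the origin supplied by the convex scaling (the analogue for the convex model of theorem 2.3, available from \cite{CGS}) forces $F$ to be H\"older continuous up to $\pa D_\infty$ near $0$. Webster's theorem \cite{W} then shows that $F$ is algebraic.

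It remains to repeat the Segre variety argument. Under the scaling, a noncompact fibre of $f$ over $p'$ produces a noncompact fibre of $F$ over $F(0) \in \pa D'_\infty$; since $\pa D_\infty$ and $\pa D'_\infty$ are weighted-homogeneous real algebraic hypersurfaces of finite type, this contradicts the invariance of Segre varieties under the algebraic map $F$, exactly as in the proof of theorem 1.1. Hence $f^{-1}(p')$ is compact in $M \cap B(p,\ep)$, and the corollary follows.

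The one point I expect to require care is confirming that the convex scaling of \cite{CGS} delivers, uniformly in $\nu \gg 1$, all three facts used above: Hausdorff convergence of $D'_\nu$ to a finite type convex model $D'_\infty$, a normal family estimate for $\{ f_\nu \}$, and — most importantly — a lower bound for the Kobayashi metric on $D'_\nu$ near the origin that stays bounded away from zero as $\nu \to \infty$. The remaining steps, namely the source-side scaling, the non-degeneracy of $F$, the algebraicity of $F$ via \cite{W}, and the Segre variety obstruction, go through verbatim as in theorem 1.1 and need no modification.
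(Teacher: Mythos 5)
Your proposal is correct and takes essentially the same route as the paper: the paper disposes of this corollary by rerunning the proof of theorem 1.1 verbatim, with the target-side scaling, the convergence of $D'_\nu$ to a convex finite type model, and the stability of the Kobayashi metric on the scaled domains all imported from \cite{CGS}, while the source-side scaling, non-degeneracy, algebraicity via \cite{W} and the Segre variety argument are unchanged. The one point you flag as requiring care is exactly what the paper cites \cite{CGS} for.
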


\no More can be said when at least one of the hypersurfaces is strongly pseudoconvex. We first consider the following local situation -- let $M \subset \mbb C^n$ be a 
$C^{\infty}$ smooth pseudoconvex finite type hypersurface and let $p \in M$ be an $h$-extendible point. Let the Catlin multitype of $M$ at $p$ be $(1, m_2, \ldots, m_n)$ 
where the $m_i$'s form an increasing sequence of even integers. Then there exists a holomorphic coordinate system around $p = 0$ in which the defining function for $M$ takes 
the form:
\[
r(z) = 2 \Re z_n + P('z, '\ov z) + R(z)
\]
where $P('z, '\ov z)$ is a $(1/{m_n}, 1/m_{n - 1}, \ldots, 1/m_2)$-homogeneous plurisubharmonic polynomial of total weight one without pluriharmonic terms and the error $R(z)$ 
has weight strictly bigger than one. As usual, if $J = (j_1, j_2, \ldots, j_{n - 1})$ is a multiindex of length $n - 1$, then $'z^J$ denotes the monomial $z_1^{j_1} z_2^{j_2} \cdots 
z_{n - 1}^{j_{n - 1}}$ and $'\ov z^J = \ov z_1^{j_1} \ov z_2^{j_2} \cdots \ov z_{n - 1}^{j_{n - 1}}$.  

\begin{thm}
Let $f : M \ra M'$ be a nonconstant Lipschitz CR mapping between real hypersurfaces in $\mbb C^n$. Fix $p \in M$ and $p' = f(p) \in M'$. Suppose that $M$ is $C^{\infty}$ smooth 
pseudoconvex and of finite type near $p$ and that $M'$ is $C^2$ strongly pseudoconvex near $p'$. If $M$ is $h$-extendible at $p$, then the weighted homogeneous polynomial in the 
defining function for $M$ near $p = 0$ can be expressed as
\[
P('z, '\ov z) = \vert P_1('z) \vert^2 + \vert P_2('z) \vert^2 + \ldots + \vert P_{n - 1}('z) \vert^2
\]
where each $P_i('z)$ for $1 \le i \le n - 1$ is a weighted holomorphic polynomial of total weight $1/2$. Moreover, the algebraic variety 
\[
V= \big\{ 'z \in \mbb C^{n - 1} : P_1('z) = P_2('z) = \ldots = P_{n - 1}('z) = 0 \big\}
\]
contains $'0 \in \mbb C^{n - 1}$ as an isolated point. In particular, there exist constants $c_j > 0$ for $1 \le j \le n - 1$ such that
\[
P('z,'\bar{z}) = c_1 \vert z_1 \vert^{m_n} + c_2 \vert z_2 \vert^{m_{n - 1}} + \ldots + c_{n - 1} \vert z_{n - 1} \vert^{m_2} + {\text mixed \; terms}
\]
where the phrase `mixed terms' denotes a sum of weight one monomials annihilated by at least one of the natural quotient maps $\mbb C['z, '\bar{z}] \ra \mbb 
C['z, '\bar{z}]/(z_j \bar{z}_k)$ for $1 \le j, k \le n-1$, $j \not= k$.
\end{thm}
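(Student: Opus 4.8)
\medskip

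\no \emph{Proof proposal.} The plan is to run the uniform scaling of the Introduction, read off the decomposition of $P$ from the rigidity of the limit map into a sphere, and then obtain the normal form from finite type. Since $M'$ is $C^2$ strongly pseudoconvex at $p'$, its type there is $2m' = 2$, the polynomial $Q_{2m'}(z_1,\ov z_1)$ in the target model is a positive multiple of $|z_1|^2$, and hence, after a linear rescaling of $z_1$, the target model domain is the Siegel domain $D'_\infty = \{\, 2\Re z_n + |z_1|^2 + \cdots + |z_{n-1}|^2 < 0 \,\}$, the unbounded realization of the ball. Carrying out the scaling construction behind Theorem 1.1 for $f$ produces a nonconstant holomorphic map $F : D_\infty \ra D'_\infty$, with $D_\infty = \{\, 2\Re z_n + P('z,'\ov z) < 0 \,\}$, which fixes $0$, maps $\pa D_\infty$ near $0$ into $\pa D'_\infty$, and is H\"older continuous up to $\pa D_\infty$ near $0$; by Webster's theorem $F$ is algebraic, and since $\pa D'_\infty$ is real-analytic strongly pseudoconvex, the reflection principle lets me treat $F$ as holomorphic near $0 \in \mbb C^n$. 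Because the Segre variety of $\pa D_\infty$ at $0$ is $\{z_n = 0\}$ (as $P('z,'0)\equiv 0$) and likewise for $\pa D'_\infty$, Segre invariance forces $F_n('z, 0) \equiv 0$, so $F_n('z,z_n) = z_n\, G('z,z_n)$ with $G$ holomorphic near $0$. To see that $G(0,0) \ne 0$, I would apply the Hopf lemma to $u := 2\Re F_n + |F_1|^2 + \cdots + |F_{n-1}|^2$, which is plurisubharmonic on $\mbb C^n$, strictly negative on $D_\infty$, continuous up to $\pa D_\infty$ near $0$ and zero at $0$: since the smooth hypersurface $\pa D_\infty$ satisfies an interior ball condition at $0$, the Hopf lemma gives $du|_0 \ne 0$, while the differentials at $0$ of the $|F_i|^2$ all vanish, so $du|_0 = 2\Re\, dF_n|_0 = 2\Re\big(G(0,0)\,dz_n\big)$, and since $du|_0 \ne 0$ this forces $G(0,0) \ne 0$.

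Next I would complexify the boundary relation $2\Re F_n + \sum_{j=1}^{n-1}|F_j|^2 = 0$, substitute the complexified equation $\xi_n = -z_n - P('z,'\xi)$ of $\pa D_\infty$ (with $'\xi,\xi_n$ the variables conjugate to $'z,z_n$), and set $z_n = 0$; using $F_n('z,0)\equiv 0$ and $F_n = z_n G$ this yields an identity
\[
P('z,'\xi)\, G^\sharp\big('\xi,-P('z,'\xi)\big) = \sum_{j=1}^{n-1} F_j('z,0)\, F_j^\sharp\big('\xi,-P('z,'\xi)\big),
\]
where $G^\sharp, F_j^\sharp$ denote the polynomials obtained from $G, F_j$ by conjugating Taylor coefficients, so that $G^\sharp(0,0) = \ov{G(0,0)} \ne 0$. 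I then pass to lowest-weight homogeneous parts relative to the weights $(1/m_n,\ldots,1/m_2)$ on $'z$, recalling that $P$ is homogeneous of pure weight one. The left side has lowest weight exactly one (the factor $G^\sharp('\xi,-P)$ contributing $\ov{G(0,0)}$); on the right, each nonzero $F_j('z,0)$ has a weighted order $a_j > 0$ with weighted-leading part $P_j('z)$ and contributes a leading term $P_j('z)\,P_j^\sharp('\xi)$ of weight $2a_j$, which on the diagonal becomes the nonnegative polynomial $|P_j('z)|^2$ and so cannot be cancelled; hence $a_j = 1/2$ for every $P_j \not\equiv 0$, and comparing weight-one parts and passing to the diagonal gives
\[
\ov{G(0,0)}\, P('z,'\ov z) = \sum_{j\in S} |P_j('z)|^2, \qquad S := \{\, j : P_j \not\equiv 0 \,\},
\]
with each such $P_j$ a weighted-holomorphic polynomial of total weight $1/2$. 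As $P$ is real and plurisubharmonic and the right side is a nonzero plurisubharmonic polynomial, $\ov{G(0,0)}$ is a positive real, and absorbing its square root into the $P_j$ yields $P = \sum_{j\in S}|P_j|^2$.

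Finally, suppose some nonconstant holomorphic curve $\gamma$ through $'0 \in \mbb C^{n-1}$ satisfied $P_j\circ\gamma \equiv 0$ for all $j \in S$. Then $P(\gamma,\ov\gamma)\equiv 0$, so $\zeta\mapsto(\gamma(\zeta),0)$ would be a nonconstant holomorphic curve through $0$ contained in $\pa D_\infty$ — impossible, since $\pa D_\infty$ is of finite type ($p$ being $h$-extendible). Thus $'0$ is an isolated point of the common zero set of $\{P_j : j\in S\}$; cutting out an isolated point of $\mbb C^{n-1}$ needs at least $n-1$ equations, so $|S| = n-1$, and relabelling gives $P = |P_1|^2 + \cdots + |P_{n-1}|^2$ with $'0$ isolated in $V$. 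On the $z_i$-axis a weight-$1/2$ holomorphic polynomial in the single variable $z_i$ (which carries weight $1/m_{n+1-i}$) is a constant times $z_i^{m_{n+1-i}/2}$, so writing $b_{ji}$ for the coefficient of $z_i^{m_{n+1-i}/2}$ in $P_j$, the restriction of $P$ to that axis equals $c_i|z_i|^{m_{n+1-i}}$ with $c_i = \sum_j |b_{ji}|^2 > 0$ (positive because the $z_i$-axis is not contained in $V$); and by weighted homogeneity together with the decomposition, the only monomials of $P$ that are neither pluriharmonic nor divisible by some $z_j\ov z_k$ ($j\ne k$) are exactly these pure powers $z_i^{m_{n+1-i}/2}\ov z_i^{m_{n+1-i}/2}$, $1\le i\le n-1$, i.e. $c_1|z_1|^{m_n} + \cdots + c_{n-1}|z_{n-1}|^{m_2}$. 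Collecting all remaining monomials into `mixed terms' then gives the asserted normal form.

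\medskip

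\no The step I expect to be the main obstacle is the first paragraph: importing from the scaling machinery behind Theorem 1.1 that the rescaled mappings form a normal family with a holomorphic, non-degenerate, boundary-regular limit $F$ fixing $0$, together with the Segre invariance and the holomorphic extension of $F$ across $\pa D_\infty$. After that, the one delicate point is the weighted-leading-term bookkeeping in the second paragraph — making sure that no homogeneous component of weight below one survives in the comparison — which rests on the exact weight-one homogeneity of $P$ and on the nonnegativity of the sum-of-squares terms.
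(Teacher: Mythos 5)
Your proposal is correct in substance and follows the same overall strategy as the paper: scale $(D,D',f)$ so that the target model is the Siegel realization of the ball, extend the nondegenerate limit $F:D_\infty\to D'_\infty$ holomorphically across $0$, extract $P=\sum_j|P_j|^2$ by comparing lowest-weight terms in the boundary identity, and use finite type of $\partial D_\infty$ to control the common zero set and the pure terms. The differences are in the mechanics, and they are worth recording. (i) For the extension across $0$ the paper does not argue via Webster plus a reflection at the origin (which may well be a weakly pseudoconvex point of $\partial D_\infty$, where algebraicity alone does not exclude branching); it invokes Theorem 2.1 of Coupet--Pinchuk for the holomorphic extension of the boundary-continuous, nondegenerate limit map into the ball model. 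This is precisely the step you flag as the main obstacle, and your ``reflection principle near $0$'' should be replaced by such a citation. (ii) After the extension, the paper controls the normal component by the Hopf lemma together with preservation of the complex tangent space, getting $F_n(z)=\alpha z_n+o(|z|)$ with $\alpha>0$, and then compares the two defining functions, $2\Re F_n+\sum_j|F_j|^2=h\cdot(2\Re z_n+P)$ with $h$ a nonvanishing real-analytic factor, reading off $h(0)P=\sum_j|P_j|^2$ from the lowest-weight non-pluriharmonic part; you instead use Segre invariance to write $F_n=z_nG$, the Hopf lemma to get $G(0,0)\neq 0$, and the complexified boundary relation restricted to the Segre variety $\{z_n=0\}$ --- an equivalent computation, with your diagonal-positivity argument playing the role of the paper's observation that no terms of weight below one can survive. (iii) To force all $n-1$ components to appear, the paper combines weighted homogeneity with a Nullstellensatz/transcendence-degree count, while you use the Krull height bound (an isolated point of $\mathbb{C}^{n-1}$ needs at least $n-1$ equations); both work. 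Two small points: your claim that $a_j=1/2$ for every nonvanishing $F_j(\cdot,0)$ is slightly more than your cancellation argument yields (it gives only $a_j\ge 1/2$), but this is harmless since only the indices with $a_j=1/2$ enter the weight-one comparison; and the positivity of $\overline{G(0,0)}$ deserves the one extra line that a plurisubharmonic polynomial on $\mathbb{C}^{n-1}$ which is bounded above must be constant, so a negative constant would force $P\equiv 0$.
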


\no As in theorem 1.1, the limit map $F : D_{\infty} \ra D'_{\infty}$ is holomorphic, nonconstant and hence algebraic. Since $D'_{\infty} \backsimeq \mbb B^n$, it is possible to 
show that $F$ extends holomorphically across $0 \in \pa D_{\infty}$ and $F(0) = 0'\in \pa D'_{\infty}$. The explicit description of the weighted homogeneous polynomial 
$P('z, '\ov z)$ follows from working with the extended mapping near the origin. On the other hand, it is also natural to consider the case of CR mappings from strongly pseudoconvex 
hypersurfaces and we have both a global and a local version -- in the spirit of theorem 1.1, for this case. First recall that a domain $D \subset \mbb C^n$ is said to be regular 
at $p \in \pa D$ if there is a pair of open neighbourhoods $V \subset U$ of $p$, constants $M > 0$, $0 < \al \le 1$ and $\be > 1$ such that for any $\ze \in V \cap \pa D$, there 
is a function $\phi_{\ze}$  which is continuous on $U \cap \ov D$, plurisubharmonic on $U \cap D$ and satisfies
\[
-M \vert z -  \ze \vert^{\al} \le \phi_{\ze}(z) \le - \vert z - \ze \vert^{\be}
\]
for all $z \in U \cap \ov D$. It is known that the class of regular points includes open pieces of strongly pseudoconvex boundaries, smooth weakly pseudoconvex finite type pieces 
in $\mbb C^2$, those that are smooth convex finite type in $\mbb C^n$ (see \cite{FS}) and finally smooth pseudoconvex finite type boundaries in $\mbb C^n$ (see \cite{Ch2}). $D$ 
is said to be regular if each of its boundary points is regular.

\begin{thm}
Let $D \subset \mbb C^n$ be a bounded regular domain, $D' \subset \mbb C^n$ a possibly unbounded domain and $f : D \ra D'$ a proper holomorphic mapping. Let $p \in \pa D$ be a 
$C^2$ strongly pseudoconvex point and $p'\in \pa D'$ be such that the boundary $\pa D'$ is $C^{\infty}$ smooth pseudoconvex and of finite type near $p'$. Suppose that the Levi 
rank of $\pa D'$ at $p'$ is $n - 2$ and assume that $p' \in cl_f(p)$, the cluster set of $p$. Then $p'$ is also a strongly pseudoconvex point.
\end{thm}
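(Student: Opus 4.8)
\noindent The plan is to run the uniform scaling of Theorem 1.1 and then exploit that, with $p$ strongly pseudoconvex, the source limit domain is biholomorphic to $\mbb B^n$. First I would note that, since $D$ is a bounded regular domain, $f$ is proper, $p$ is $C^2$ strongly pseudoconvex and $\pa D'$ is of finite type near $p'\in cl_f(p)$, the local regularity theory for proper holomorphic mappings (\cite{FS},\cite{Ch2}) yields a H\"{o}lder continuous extension of $f$ to a neighbourhood of $p$ in $\ov D$ with $f(p)=p'$ and $f(\pa D)\subset\pa D'$ near $p$. I would then scale both sides based at $p$ and at $p'$. Pinchuk's dilations give automorphisms $h_\nu$ of $\mbb C^n$ with $h_\nu(p)=0$ and $h_\nu(D)$ converging to the Siegel domain
\[
D_\infty=\big\{z\in\mbb C^n:2\Re z_n+|z_1|^2+\cdots+|z_{n-1}|^2<0\big\}\simeq\mbb B^n ,
\]
while, the Levi form of $\pa D'$ at $p'$ having corank at most one (such a point being $h$-extendible), $\pa D'$ near $p'$ scales as in \cite{Ch3},\cite{TT}: there are automorphisms $H_\nu$ with $H_\nu(p')=0$ and $H_\nu(D')\ra D'_\infty$ as in $(1.3)$, where $2m'$ is the type of $\pa D'$ at $p'$. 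Exactly as in Theorem 1.1, the special polydiscs of \cite{Ch3} and the stable lower bound for the Kobayashi metric of $D'_\nu=H_\nu(D')$ near the origin (Theorem 2.3) make $f_\nu:=H_\nu\circ f\circ h_\nu^{-1}$ a normal family, and a subsequence converges to a holomorphic $F:D_\infty\ra\ov{D'_\infty}$ with $F(0)=0\in\pa D'_\infty$, since $f_\nu(0)=H_\nu(f(p))=H_\nu(p')=0$ for every $\nu$. Because $f$ is proper, the usual argument of the scaling method (\cite{CS},\cite{CGS},\cite{CPS1}) shows $F$ is non-constant; in fact its complex Jacobian is not identically zero.

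Being non-constant, $F$ must map $D_\infty$ into $D'_\infty$: otherwise $\rho'\circ F\equiv 0$, which is impossible since $\pa D'_\infty$, being of finite type, contains no germ of a positive dimensional complex variety, where $\rho'(z):=2\Re z_n+Q_{2m'}(z_1,\ov z_1)+|z_2|^2+\cdots+|z_{n-1}|^2$. Transporting the H\"{o}lder continuity of $f$ near $p$ through the scaling and using the stable Kobayashi estimate of Theorem 2.3 as in \cite{TT}, I would show that $F$ extends H\"{o}lder continuously to $\pa D_\infty$ near $0$ with $F(\pa D_\infty)\subset\pa D'_\infty$ there. Then, $\pa D_\infty$ and $\pa D'_\infty$ being real algebraic, Webster's theorem (\cite{W}) makes $F$ algebraic, whence $F$ extends holomorphically across $\pa D_\infty$ to a full neighbourhood of $0$ in $\mbb C^n$, still carrying $\pa D_\infty$ into $\pa D'_\infty$.

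The conclusion then follows from a Hopf lemma together with a pull-back of the Levi form. Since $Q_{2m'}$ is subharmonic, $\rho'$ is plurisubharmonic on $\mbb C^n$, so $\rho'\circ F$ is plurisubharmonic on $D_\infty$, strictly negative there, and vanishes on $\pa D_\infty$ near $0$; as $0$ is a strongly pseudoconvex boundary point of $D_\infty$ and $\rho'\circ F\not\equiv 0$, Hopf's lemma yields $\rho'\circ F=u\,\rho$ near $0$, with $\rho$ a defining function of $D_\infty$ and $u(0)>0$. Restricting $\pa\ov\pa(\rho'\circ F)$ to $T^{1,0}_0\pa D_\infty$, the terms $\sum_a(\pa_{z_a}\rho')(F)\,\pa\ov\pa F^a$ vanish because $F$ is holomorphic, while on the other side the contributions of $\pa u$, $\ov\pa u$ and $\rho\,\pa\ov\pa u$ drop on $T^{1,0}_0\pa D_\infty$, leaving
\[
\mathcal{L}_{\pa D'_\infty,\,0}\big(dF_0X,\ov{dF_0X}\big)=u(0)\,\mathcal{L}_{\pa D_\infty,\,0}(X,\ov X),\qquad X\in T^{1,0}_0\pa D_\infty ,
\]
where $\mathcal{L}$ denotes the Levi form. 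The right side is positive definite on the $(n-1)$-dimensional space $T^{1,0}_0\pa D_\infty$, so $dF_0$ is injective there and, the dimensions being equal, carries it isomorphically onto $T^{1,0}_0\pa D'_\infty$; hence the Levi form of $\pa D'_\infty$ at $0$ is positive definite. Thus $0$ is a strongly pseudoconvex point of $\pa D'_\infty$, which forces $D'_\infty\simeq\mbb B^n$, i.e. $m'=1$ and $Q_{2m'}(z_1,\ov z_1)$ is a positive multiple of $|z_1|^2$. Tracing this back through the scaling, the Levi degenerate direction of $\pa D'$ at $p'$ is in fact of type two, so $p'$ is a strongly pseudoconvex point.

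The hard part will be the construction of $F$ and its regularity: establishing normality of $\{f_\nu\}$ on the model $D'_\infty$ (which uses the special polydiscs of \cite{Ch3} for pseudoconvex finite type points of Levi corank at most one together with the stable Kobayashi lower bound of Theorem 2.3), checking that $F$ stays non-degenerate in the limit, and upgrading $F$ from mere H\"{o}lder continuity up to $\pa D_\infty$ to a holomorphic extension across $\pa D_\infty$ near $0$ (via Webster's theorem) so that the final Levi form computation is legitimate. Once $F$ is in hand the Hopf lemma and the pull-back are formal; equivalently, this last step amounts to the statement that the boundary blow-up rates of the Kobayashi metrics of $D_\infty$ and of $D'_\infty$, which must coincide along $F$ near $0$, can do so only when $m'=1$.
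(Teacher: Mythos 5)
Your overall skeleton (scale both sides, get $F:\mbb B^n\simeq D_\infty\ra D'_\infty$, extend $F$ across the boundary, pull back the Levi form to force $Q_{2m'}=|z_1|^2$ and hence $2m'=2$) matches the paper, and your final Hopf-lemma/Levi-form computation is essentially the paper's identity $\la_{\mbb B^n}(z)=\la_{D'_\infty}(F(z))|J_F(z)|^2$. But there are two genuine gaps at exactly the points where this theorem differs from Theorem 1.1. First, you assert that ``because $f$ is proper, the usual argument of the scaling method shows $F$ is non-constant; in fact its complex Jacobian is not identically zero.'' The usual argument does not apply here: in Theorem 1.1 non-degeneracy came from the two-sided boundary-distance estimate, whose lower half used the Hopf lemma applied to $r'\circ f$ with $r'$ plurisubharmonic on the pseudoconvex side of $M'$ and whose upper half used the Lipschitz hypothesis -- neither is available in Theorem 1.4, and the paper explicitly flags that section 2 ``can no longer guarantee nondegeneracy of a limit map, owing to the lack of a clear cut boundary distance conservation property of $f$.'' The paper's substitute is the real content of the proof: using Berteloot--Sukhov \cite{BS} it extracts a local proper correspondence $f^{-1}_{loc}$ continuous up to the boundary with $cl_{f^{-1}_{loc}}(p')=\{p\}$, scales this correspondence, controls its images by the Schwarz lemma for correspondences \cite{V} together with the stability of Kobayashi balls \cite{MV}, and obtains in the limit a correspondence inverse to $F$; this is what makes $F:\mbb B^n\ra D'_\infty$ proper, hence non-degenerate, continuous up to the boundary by \cite{BS}, smooth up to the boundary by \cite{BC}, algebraic as in section 2.6, and holomorphically extendable by \cite{BB2}. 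Without some replacement for this step your Levi-form pullback has nothing to act on.

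Second, your opening claim that \cite{FS},\cite{Ch2} give a H\"older continuous extension of $f$ to $\ov D$ near $p$ with $f(p)=p'$ (so that $f_\nu(0)=0$ for all $\nu$) is not justified: here $f$ is only proper, $D'$ may be unbounded, no Lipschitz or plurisubharmonic-defining-function hypothesis is made, and a priori $cl_f(p)$ need not be a single point, so neither continuity of $f$ at $p$ nor $f(p)=p'$ is available. The paper avoids this by choosing the scaling data through the inverse correspondence: it takes $p'^\nu$ on the inner normal at $p'$, uses properness of $f$ and $p'\in cl_f(p)$ to get $p^\nu\ra p$ with $f(p^\nu)=p'^\nu$, and uses $cl_{f^{-1}_{loc}}(p')=\{p\}$ to keep the $p^\nu$ localized; normality of the scaled maps then comes from Proposition 2.1, not from any H\"older continuity of $f$. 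Likewise your appeal to ``transporting the H\"older continuity of $f$ through the scaling'' and then to Webster's theorem presupposes boundary regularity and boundary-to-boundary behaviour of $F$ that, in the paper, are consequences of the properness of $F$ obtained via the correspondence argument (together with the section 2.6 argument that $F$ cannot send an open boundary piece into the weakly pseudoconvex locus). So the endgame of your proposal is fine, but the construction and non-degeneracy of $F$ -- the part you defer as ``the hard part'' -- is precisely where the paper introduces machinery your outline does not supply.
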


\no This fits in the paradigm, observed earlier by many other authors (for example \cite{DF1}), that a proper mapping does not increase the type of a boundary point. Note 
furthermore that there are no other assumptions on the defining function for $\pa D'$ near $p'$ as 
in theorem 1.1 except the Levi rank condition. The difficulty created by the lack of this assumption as explained above is circumvented by the global properness of $f$ -- indeed, 
it is possible to scale $f$ to get a holomorphic limit $F : \mbb B^n \ra D'_{\infty}$ where $D'_{\infty}$ is as in (1.2). Now using the fact that $p \in \pa D$ is strongly 
pseudoconvex and hence regular, it is possible to peel off a local correspondence from the global one $f^{-1} : D' \ra D$ that extends continuously up to $\pa D'$ near $p'$ and 
contains $p$ in its cluster set by \cite{BS}. This local correspondence can be scaled, using the Schwarz lemma for correspondences from \cite{V} and the behaviour of the 
scaled balls in the Kobayashi metric from \cite{MV}. This gives a well defined correspondence from $D'_{\infty}$ with values in $\mbb B^n$ and this turns out to be the inverse 
for $F$. Thus $F : \mbb B^n \ra D'_{\infty}$ is proper and this is sufficient to conclude that $p' \in \pa D'$ must be strongly pseudoconvex. 

\begin{cor}
Let $f : M \ra M'$ be a nonconstant Lipschitz CR mapping between real hypersurfaces in $\mbb C^n$. Fix $p \in M$ and $p' = f(p) \in M'$. Suppose that $M$ is $C^2$ strongly 
pseudoconvex near $p$ and that there is an open neighbourhood $U' \subset \mbb C^n$ of $p'$ and a $C^{\infty}$ smooth defining function $r'(z')$ for $U'\cap M'$ in $U'$ which 
is plurisubharmonic on the pseudoconvex side of $M'$ near $p'$. If the Levi rank of $M'$ at $p'$ is at least $n - 2$ then $p'$ is a strongly pseudoconvex point.
\end{cor}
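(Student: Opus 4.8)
\no The plan is to reduce the assertion to the preceding theorem on proper holomorphic maps from bounded regular domains (theorem 1.4) by upgrading the local Lipschitz CR map $f$ to a proper holomorphic map, following the proof of theorem 1.1. Since $M$ is $C^2$ strongly pseudoconvex near $p$, it is $h$-extendible at $p$ with Catlin multitype $(1, 2, \ldots, 2)$, and $p$ is a regular boundary point; moreover, as in the setting of theorem 1.1, $M'$ is taken to be pseudoconvex of finite type near $p'$. Thus all the hypotheses of theorem 1.1 hold: $M$ is $h$-extendible at $p$, there is a smooth defining function for $M'$ near $p'$ plurisubharmonic on the pseudoconvex side, and the Levi rank of $M'$ at $p'$ is at least $n - 2$. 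As recalled in the introduction, $f$ then extends holomorphically to the pseudoconvex side $D = B^-(p, \ep)$, extends Lipschitz continuously up to $M \cap B(p, \ep)$, and maps it into $D' = B^-(p', \ep')$.

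Next I would run the scaling argument underlying theorem 1.1, which simplifies here because $M$ is strongly pseudoconvex and so the source model is $D_\infty \backsimeq \mbb B^n$. The scaled maps $f_\nu : D_\nu \ra D'_\nu$ form a normal family by the argument of \cite{TT}, which exploits the special polydiscs on the target side (available since the Levi rank of $M'$ near $p'$ is at least $n - 2$); the limit map $F : \mbb B^n \ra D'_\infty$ is holomorphic, non-degenerate and in particular nonconstant (by the Lipschitz hypothesis together with the plurisubharmonicity of the defining function of $D'$), H\"{o}lder continuous up to $\pa D_\infty$ near the origin (by the stable lower bound for the Kobayashi metric of $D'_\nu$ from theorem 2.3), and therefore algebraic by Webster's theorem \cite{W}. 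If $f$ had a noncompact fibre over $p'$, then $F$ would have a noncompact fibre over $F(0)$, violating the invariance of Segre varieties attached to $\pa D_\infty$ and $\pa D'_\infty$. Hence $f^{-1}(p')$ is compact in $M \cap B(p, \ep)$, and by the Bell-Catlin construction \cite{BC} the neighbourhoods may be chosen so that $f : D \ra D'$ is a proper holomorphic mapping, with $D$ a bounded regular domain (in fact strongly pseudoconvex, after a harmless smoothing) whose boundary agrees with $M$ near $p$, and with $\pa D'$ agreeing with $M'$ near $p'$.

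It then remains to apply theorem 1.4. If the Levi rank of $M'$ at $p'$ equals $n - 1$, then $p'$ is strongly pseudoconvex by definition. Otherwise it equals $n - 2$, and one has a bounded regular domain $D$, a $C^2$ strongly pseudoconvex point $p \in \pa D$, a boundary $\pa D'$ that is $C^\infty$ smooth pseudoconvex of finite type near $p'$ with Levi rank $n - 2$ at $p'$, a proper holomorphic map $f : D \ra D'$, and $p' = f(p) \in cl_f(p)$. Theorem 1.4 then yields that $p'$ is a strongly pseudoconvex point, completing the proof.

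The conceptual crux is the compactness of $f^{-1}(p')$, which is precisely the content of the proof of theorem 1.1 and rests on the uniform scaling together with the Segre-variety analysis of the model domains; once that input is in hand, the remaining work specific to the corollary is the Bell-Catlin passage to a proper map and the verification of the hypotheses of theorem 1.4. Of these, the Bell-Catlin step is the one deserving care: one must arrange that $D$ is a bounded regular domain and that $\pa D'$ stays pseudoconvex of finite type near $p'$, and it is precisely here that the strong pseudoconvexity of $M$ near $p$ enters, since it makes $D$ locally, hence (after smoothing) globally, strongly pseudoconvex and therefore regular.
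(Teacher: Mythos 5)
Your proposal is correct and follows essentially the same route as the paper: the fibre $f^{-1}(p')$ is compact by the proof of theorem 1.1, Bell--Catlin then yields a proper holomorphic extension $f : D \ra D'$ with $D$ strongly pseudoconvex (hence bounded and regular) near $p$, and theorem 1.4 applied at $p' \in cl_f(p)$ gives strong pseudoconvexity of $p'$. Your extra remarks (strong pseudoconvexity implies $h$-extendibility of $M$ at $p$, and the trivial Levi-rank $n-1$ case) are sound refinements of the paper's terser argument.
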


The first author would like to thank Herv\'{e} Gaussier for very patiently listening to the material presented here.


\section{Proof of theorem 1.1}

\no Consider a smooth pseudoconvex finite type hypersurface $M \subset \mbb C^n$. Associated to each $p \in M$ are two well known invariants: one is the D'Angelo type 
\[
\De(p) = (\De_n(p), \De_{n - 1}(p), \ldots, \De_1(p))
\]
where the integer $\De_q(p)$ is the $q$-type of $M$ at $p$ and is a measure of the maximal order of contact of $q$ dimensional 
varieties with $M$ at $p$. To recall the definition (see \cite{D}), let $r$ be a local defining function for $M$ near $p$ and let $\ti r(z) = r(z + p)$. Then for $1 \le q \le n$,
\[
\De_q(p) = \inf_L \sup_\tau \{ \nu(\ti r \circ L \circ \tau) / \nu(\tau) \}
\]
where the infimum is taken over all linear embeddings $L : \mbb C^{n - q + 1} \ra \mbb C^n$ and the supremum is taken over all germs of holomorphic curves $\tau : (\mbb C, 0) 
\ra (\mbb C^{n - q + 1}, 0)$ mapping the origin in $\mbb C$ to the origin in $\mbb C^{n - q + 1}$ and $\nu(f)$ denotes the order of vanishing of $f$ at the origin. The 
smoothness of $M$ at $p$ implies that $\De_n(p) = 1$ and it can be seen that $2 \le \De_{n - 1}(p) \le \De_{n - 2}(p) \le \ldots \le \De_1(p) < \infty$.

\medskip

To quickly recall the Catlin multitype of $M$ at $p$ (see \cite{C3}), let $\Ga_n$ be the collection of $n$-tuples of reals $m = (m_1, m_2, \ldots, m_n)$ such that $0 < m_1 \le 
m_2 \le \ldots \le m_n \le \infty$. Order $\Ga_n$ lexicographically. An element $m$ of $\Ga_n$ is called distinguished provided there is a holomorphic coordinate system $w = 
\phi(z)$ around $p$ with mapped to the origin such that if
\[
(\al_1 + \be_1)/m_1 + (\al_2 + \be_2)/m_2 + \ldots + (\al_n + \be_n)/m_n < 1
\]
then $D^{\al} \ov D^{\be} r \circ \phi^{-1}(0) = 0$; here $\al = (\al_1, \al_2, \ldots, \al_n)$ and $\be = (\be_1, \be_2, \ldots, \be_n)$ are $n$-tuples and $D^{\al}$ and $\ov 
D^{\be}$ are the partial derivatives
\[
\pa^{\vert \al \vert} / \pa z_1^{\al_1} \pa z_2^{\al_2} \ldots \pa z_n^{\al_n}  \; {\rm and} \;  \pa^{\vert \be \vert} / \pa \bar{z}_1^{\be_1}  \pa \bar{z}_2^{\be_2} \ldots  \pa \bar{z}_n^{\be_n} 
\]
respectively. The Catlin multitype $\cal M(p) = (m_1, m_2, \ldots, m_n)$ of $M$ at $p$ is defined to be the largest amongst all distinguished elements. Since $r(z)$ is a smooth 
defining function for $M$ near $p$, it can be seen that the first entry in $\cal M(p)$ is always one. If $\cal M(p)$ is finite, i.e., $m_n < \infty$, then there is a coordinate 
system around $p = 0$ such that the defining function is of the form
\begin{equation}
r(z) = 2 \Re z_n + P('z, ' \ov z) + R(z)
\end{equation}
where $P('z, ' \ov z)$ is a $(1/{m_n}, 1/{m_{n - 1}}, \ldots, 1/m_2)$ homogeneous polynomial of total weight one which is plurisubharmonic and does not contain pluriharmonic terms 
and 
\begin{equation}
\vert R(z) \vert \lesssim \big( \vert z_1 \vert^{m_n} + \vert z_2 \vert^{m_{n - 1}} + \ldots + \vert z_n \vert^{m_1} \big)^{\ga}
\end{equation}
for some $\ga > 1$. The homogeneity of $P('z, '\ov z)$ with total weight one mentioned above means that for $t \ge 0$, $P \circ \pi_t('z) = t P('z, '\ov z)$ where
\[
\pi_t(z_1, z_2, \ldots, z_{n - 1}) = (t^{1/{m_n}} z_1, t^{1/{m_{n - 1}}} z_2, \ldots, t^{1/{m_2}} z_{n - 1}). 
\]
Other total weights $\mu > 0$ occur when $t^{\mu}$ (instead of $t$) can be factored from $P \circ \pi_t('z)$. The plurisubharmonicity of $P('z, '\ov z)$ implies that each $m_k$ 
for $2 \le k \le n$ must be even. Thus the variable $z_i$ is assigned a weight of $1/{m_{n - i + 1}}$ for $1 \le i \le n - 1$ and by definition the weight of the 
monomial ${'z^J}{'\ov z^K}$ is 
\[
(j_1 + k_1)/m_n + (j_2 + k_2)/{m_{n - 1}} + \ldots + (j_{n - 1} + k_{n - 1})/{m_2}
\]
for $(n - 1)$-multiindices $J = (j_1, j_2, \ldots, j_{n-1})$ and $K = (k_1, k_2, \ldots, k_{n - 1})$. A basic relation between $\De(p)$ and $\mathcal M (p)$ proved in 
\cite{C3} is that $m_{n + 1 - q} \le \De_q$ for all $1 \le q \le n$. 

\medskip

Call $p \in M$ an $h$-extendible (or semi-regular) point if $\cal M (p)$ is finite and $\De(p) = \cal M (p)$. This happens if and only if (see \cite{Yu1}) there is a $(1/m_n, 
1/m_{n - 1}, \ldots, 1/m_2)$ homogeneous $C^1$ smooth real function $a('z)$ on $\mbb C^{n - 1} \sm \{0\}$ such that $a('z) > 0$ whenever $'z \not= 0$ and $P('z, '\ov z) - 
a('z)$ is plurisubharmonic on $\mbb C^{n - 1}$ and among other things, this is equivalent to the model domain $D_{\infty}$ (as in (1.1)) being of finite type. We shall henceforth 
assume that $p = 0$ and $p'= 0'$ and that the respective defining functions $r(z)$ and $r'(z')$ satisfy $\pa r / \pa z_n (0) \not= 0$ and $\pa r'/ \pa z'_n (0') \not= 0$.
The holomorphic map $f : D \ra D'$ is not necessarily proper, but H\"{o}lder continuous with exponent $\de \in (0, 1)$ on $\ov D$ near $0 \in \pa D$ by \cite{Su} and $f(0) = 0'$. 
Thus the assumption that $f$ is Lipschitz is stronger than what is apriori known.


\subsection{The Scaling Method applied to $(D, D', f)$} For $z \in D$ close to the origin, note that
\[
{\rm dist}(z, \pa D) \; \lesssim \; {\rm dist}(f(z), \pa D') \; \lesssim \; {\rm dist}(z, \pa D)
\]
where the inequality on the right follows since $f$ admits a Lipschitz extension to $D$ near the origin, while the left inequality follows by applying the Hopf lemma to $r'\circ 
f(z)$ which is a negative plurisubharmonic function on $D$. To scale $D$, choose a sequence of points $p^{\nu} = ('0, -\de_{\nu})$ in $D$ along the inner normal at the origin, where 
$\de_{\nu} > 0$ and $\de_{\nu} \searrow 0$. Let $T^{\nu}$ be the dilation defined by
\[
T^{\nu} : (z_1, z_2, \ldots, z_{n - 1}, z_n) \mapsto (\de_{\nu}^{-1/{m_n}} z_1, \de_{\nu}^{-1/{m_{n - 1}}} z_2, \ldots, \de_{\nu}^{-1/{m_2}} z_{n - 1}, \de_{\nu}^{-1} z_n)
\]
and note that $T^{\nu}(p^{\nu}) = ('0, -1)$ while the domains $D_{\nu} = T^{\nu}(D)$ are defined by
\[
r_{\nu} = \de_{\nu}^{-1} \; r \circ {(T^{\nu})}^{-1}(z) = 2 \Re z_n + P('z, '\ov z) + \de_{\nu}^{-1} R \circ {(T^{\nu})}^{-1}(z)
\]
where
\[
\big\vert  \de_{\nu}^{-1} \; R \circ {(T^{\nu})}^{-1}(z) \big\vert \; \lesssim \; \de_{\nu}^{\ga - 1} \big( \vert z_1 \vert^{m_n} + \vert z_2 \vert^{m_{n - 1}} + \ldots + \vert 
z_{n - 1} \vert^{m_2} + \vert z_n \vert \big)^{\ga}
\]
by (2.1). On each compact set in $\mbb C^n$ this error term converges to zero since $\ga > 1$ and hence the sequence of domains $D_{\nu}$ converges in the Hausdorff metric to
\[
D_{\infty} =  \big\{ z \in \mbb C^n : 2 \Re z_n + P('z, ' \ov z) < 0 \big\}.
\]
Let $r_{\infty}(z) =  2 \Re z_n + P('z, ' \ov z)$. To scale $D'$ recall that by \cite{Ch3}, for each $\ze$ near $0' \in \pa D'$ there is a unique polynomial automorphism 
$\Phi_{\ze}(z) : \mbb C^n \ra \mbb C^n$ with $\Phi_{\ze}(\ze) = 0$ such that
\begin{multline*}
r (\Phi_{\ze}^{-1}(z)) = r(\ze) + 2 \Re z_n + \sum_ {\substack{ j + k \le 2m\\
								j, k > 0}} a_{jk}(\ze) z_1^j \ov z_1^k + \sum_{\al = 1}^{n - 1} \vert z_{\al} \vert^2 + 
		\sum_{\al = 2}^{n - 1} \sum_{ \substack{j + k \le m\\
                                                          j, k > 0}} \Re \Big( \big(b_{jk}^{\al}(\ze) z_1^j \ov z_1^k \big) z_{\al} \Big) \\
+ O \big( \vert z_n \vert \vert z \vert + \vert z_{\ast} \vert^2 \vert z \vert + \vert z_{\ast} \vert \vert z_1 \vert^{m + 1} + \vert z_1 \vert^{2m + 1} \big).
\end{multline*}
where for $z=(z_1,\ldots,z_n) \in \mathbb{C}^n$, we denote $z_* = (z_2,\ldots,z_{n-1})\in \mathbb{C}^{n-2}$.
These automorphisms converge to the identity uniformly on compact subsets of $\mbb C^n$ as $\ze \ra 0$. Furthermore, if for $\ze = (\ze_1, \ze_2, \ldots, \ze_n) \in D'$ as above 
we consider the point $\ti \ze = (\ze_1, \ze_2, \ldots, \ze_n + \ep)$ where $\ep > 0$ is chosen to ensure that $\ti \ze \in \pa D'$, then the actual form of $\Phi_{\ze}(z)$ shows 
that 
$\Phi_{\ti \ze}(\ze) = (0, \ldots,0, -\ep)$ -- since the explicit description of these automorphisms will come up later, we shall be content at this stage with merely 
collecting the relevant properties needed to describe the scaling of $D'$. To define the distinguished polydiscs around $\ze$ (more precisely, biholomorphic images of polydiscs), 
let
\[
A_l(\ze) = \max \{ \vert a_{jk}(\ze) \vert : j + k = l\}, \;\; 2 \le l \le 2m'
\]
and 
\[
B_{l'}(\ze) = \max \{ \vert b^{\al}_{jk}(\ze) \vert : j + k = l', 2 \le \al \le n - 1\}, \; \; 2 \le l'\le m'.
\]  
For each $\de > 0$ define
\[
\tau(\ze, \de) = \min \big\{ \big( \de / A_l(\ze) \big)^{1/l}, \big( \de^{1/2}/ B_{l'}(\ze) \big)^{1/l'} : \; 2 \le l \le 2m', \; 2 \le l'\le m' \big\}
\]
Since the type of $\pa D'$ at the origin is $2m'$ it follows that $A_{2m'}(0) > 0$ and hence $A_{2m'}(\ze)$ is positive for all $\ze$ sufficiently close to the origin. Thus
\[
\de^{1/2} \lesssim \tau(\ze, \de) \lesssim \de^{1/2m'}
\]
for $\ze$ close to the origin -- the upper bound being a consequence of the non-vanishing of $A_{2m'}(\ze)$ near the origin while the lower bound follows since the greatest 
possible exponent of $\de$ in the definition of $\tau(\ze, \de)$ is $1/2$. Set 
\[
\tau_1(\ze, \de) = \tau(\ze, \de) = \tau, \tau_2(\ze, \de) = \ldots = \tau_{n - 1}(\ze, \de) = \de^{1/2}, \tau_n(\ze, \de) = \de
\]
and define
\[
R(\ze, \de) = \{z \in \mbb C^n : \vert z_k \vert < \tau_k(\ze, \de), 1 \le k \le n\}
\]
which is a polydisc around the origin in $\mbb C^n$ with polyradii $\tau_k(\ze, \de)$ along the $z_k$ direction for $1 \le k \le n$ and let
\[
Q(\ze, \de) = \Phi_{\ze}^{-1}\big( R(\ze, \de) \big)
\]
which is a distorted polydisc around $\ze$. It was shown in \cite{TT} that these domains satisfy the engulfing property, i.e., for all $\ze$ in a small fixed neighbourhood of the 
origin, there is a uniform constant $C > 0$ such that if $\eta \in Q(\ze, \de)$, then $Q(\eta, \de) \subset Q(\ze, C \de)$ and $Q(\ze, \de) \subset Q(\eta, C \de)$.

\medskip

Consider the sequence $p'^{\nu} = f(p^{\nu}) \in D'$ that converges to the origin and denote by $w'^{\nu}$ the point on $\pa D'$ chosen such that if $p'^{\nu} = (p'^{\nu}_1, 
p'^{\nu}_2, \ldots, p'^{\nu}_n)$ then $w'^{\nu} = (p'^{\nu}_1, p'^{\nu}_2, \ldots, p'^{\nu}_n + \ga_{\nu})$ for some $\ga_{\nu} > 0$. Note that
\[
\ga_{\nu} \approx {\rm dist}(p'^{\nu}, \pa D')
\]
for all large $\nu$. Hence
\[
\de_{\nu} = {\rm dist}(p^{\nu}, \pa D) \approx {\rm dist}(p'^{\nu}, \pa D') \approx \ga_{\nu}
\]
for all large $\nu$. Let $g^{\nu} = \Phi_{w'^{\nu}}(\cdot)$ be the polynomial automorphism of $\mbb C^n$ corresponding to $w'^{\nu} \in D'$ as described above. Let us consider the 
holomorphic mappings
\[
f^{\nu} = g^{\nu} \circ f : D \ra g^{\nu}(D')
\]
and define a dilation of coordinates in the target space by 
\[
B^{\nu} : (z'_1, z'_2, \ldots, z'_n) \mapsto \big( (\tau^{\nu}_1)^{-1} z'_1, (\tau^{\nu}_2)^{-1} z'_2, \ldots, (\tau^{\nu}_n)^{-1} z'_n \big)
\]
where $\tau^{\nu}_1 = \tau(w'^{\nu}, \ga_{\nu})$, $\tau^{\nu}_j = \ga^{\nu}_{1/2}$ for $2 \le j \le n - 1$ and $\tau^{\nu}_n = \ga_{\nu}$. Let $D^{\nu} = T^{\nu}(D)$ and 
$D'^{\nu}= (B^{\nu} \circ g^{\nu})(D')$ be the scaled domains and the scaled maps between them are 
\[
F^{\nu} = B^{\nu} \circ f^{\nu} \circ (T^{\nu})^{-1} : D_{\nu} \ra D'_{\nu}.
\]
To understand the Hausdorff limit of the domains $D'_{\nu}$, note first that $B^{\nu} \circ g^{\nu}(p'^{\nu}) = ('0, -1)$, which implies that $F^{\nu}('0, -1) = ('0, -1)$ for 
all $\nu$, and that $r'_{\nu}$, the defining function for $D'_{\nu}$ is given by
\[
\ga_{\nu}^{-1} \; r'\circ (B^{\nu} \circ g^{\nu})^{-1}(z) = 2 \Re z_n + Q_{\nu}(z_1, \ov z_1) + \sum_{\al = 2}^{n - 1} \vert z_{\al} \vert^2 + \sum_{\al = 2}^{n - 1} \Re \big( 
S^{\al}_{\nu}(z_1, \ov z_1) z_{\al} \big) + O(\tau^{\nu}_1)
\]
where
\[
Q_{\nu}(z_1, \ov z_1) =  \sum_ {\substack{ j + k \le 2m'\\
                                                  j, k > 0}} a_{jk}(w'^{\nu}) \ga_{\nu}^{-1} (\tau^{\nu}_1)^{j + k} z_1^j \ov z_1^k
\]
and
\[
S^{\al}_{\nu}(z_1, \ov z_1) =  \sum_ {\substack{ j + k \le m'\\
                                                        j, k > 0}} b^{\al}_{jk}(w'^{\nu}) \ga_{\nu}^{-1/2} (\tau^{\nu}_1)^{j + k} z_1^j \ov z_1^k.
\]
By the definition of $A_l, B_{l'}$ and $\tau^{\nu}_1$ it follows that the largest coefficient in both $Q_{\nu}$ and $S^{\al}_{\nu}$ is at most one in modulus. It was shown in 
\cite{Ch3} that there exists a uniform $\ep > 0$ such that
\[
\big \vert b^{\al}_{jk}(w'^{\nu}) \ga_{\nu}^{-1/2} (\tau^{\nu}_1)^{j + k} \big \vert \lesssim (\tau^{\nu}_1)^{\ep}
\] 
for all possible indices $j, k, \al$ and all large $\nu$. Therefore some subsequence of this family of defining functions converges together with all derivatives on 
compact sets to
\[
r'_{\infty}(z) = 2 \Re z_n + Q_{2m'}(z_1, \ov z_1) + \vert z_2 \vert^2 + \ldots + \vert z_{n - 1} \vert^2  
\]
where $Q_{2m'}(z_1, \ov z_1)$ is a polynomial of degree at most $2m'$ without harmonic terms. Hence the domains $D'_{\nu}$ converge to
\[
D'_{\infty} = \big \{ z \in \mbb C^n : 2 \Re z_n +  Q_{2m'}(z_1, \ov z_1) + \vert z_2 \vert^2 + \ldots + \vert z_{n - 1} \vert^2 < 0  \big \}
\]
which, being the smooth limit of pseudoconvex domains, is itself pseudoconvex. In particular, it follows that $Q_{2m'}(z_1, \ov z_1)$ is subharmonic.

\medskip

It is known that analytic discs in a bounded regular domain in $\mbb C^n$ satisfy the so-called attraction property (see \cite{Ber1}), i.e., if the centre of a given disc is 
close to a boundary point, then a given subdisc around the origin cannot wander too far away from the same boundary point. A quantitative version of this was proved by 
Berteloot-Coeur\'{e} in \cite{BerC} and forms the basis for controlling families of scaled mappings in $\mbb C^2$. Using ideas from \cite{Ber2} and \cite{Ch3}, the following 
analogue was proved in \cite{TT} and will be useful in this situation as well -- we include the statement for the sake of completeness.

\begin{prop}
Let $\Om \subset \mbb C^n$ be a bounded domain. Let $\ze_0 \in \pa \Om$ and suppose there is an open neighbourhood $V$ such that $V \cap \pa \Omega$ is $C^{\infty}$ smooth 
pseudoconvex of finite type and the Levi rank is at least $n - 2$ on $V \cap \pa \Om$. Fix a domain $\om \subset \mbb C^m$. 

Then for any fixed point $z_0 \in \om$ and a compact $K \subset \om$ containing $z_0$, there exist constants $\ep(K), C(K) > 0$ such that for any $\xi \in V \cap \pa \Om$ and $0 
< \ep < \ep(K)$, every holomorphic mapping $F : \om \ra \Om$ with 
\[
\vert F(z_0) - \ze_0 \vert < \ep(K) \;\; {\rm and} \;\; F(z_0) \in Q(\xi, \ep)
\]
also satisfies $F(K) \subset Q(\xi, C(K), \ep)$. 
\end{prop}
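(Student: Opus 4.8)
The plan is to reduce the assertion to a stable, boundary-uniform lower estimate for the Kobayashi distance of $\Om$ near $\ze_0$, and then feed in the engulfing property of the polydiscs $Q(\xi,\de)$ recorded above. Fix $z_0\in\om$ and a compact $K\subset\om$ with $z_0\in K$. Since $\om$ is a fixed domain, its Kobayashi pseudodistance $d_\om$ is finite and continuous, so $L(K):=\sup_{z\in K}d_\om(z_0,z)<\infty$, and $L(K)$ depends only on $\om$, $z_0$ and $K$. Every holomorphic $F:\om\ra\Om$ is distance decreasing for the Kobayashi distance, hence $d_\Om(F(z_0),F(z))\le L(K)$ for all $z\in K$; that is, $F(K)$ lies in the closed Kobayashi ball $B_\Om\big(F(z_0),L(K)\big):=\{w\in\Om:d_\Om(F(z_0),w)\le L(K)\}$. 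It therefore suffices to produce $\ep(K),C(K)>0$ such that, whenever $\eta:=F(z_0)$ satisfies $\vert\eta-\ze_0\vert<\ep(K)$ and $\eta\in Q(\xi,\ep)$ with $0<\ep<\ep(K)$, one has $B_\Om\big(\eta,L(K)\big)\subset Q(\xi,C(K)\,\ep)$.

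The technical core consists of two \emph{uniform} (in $\xi$) Kobayashi estimates near $\ze_0$, the static analogues of the stable bounds used for the scaled domains (compare Theorem~2.3 below, and \cite{Ber2},\cite{Ch3},\cite{TT}). First shrink $V$ to a neighbourhood $V'\Subset V$ of $\ze_0$ on which $\pa\Om$ is still $C^{\infty}$ smooth pseudoconvex of finite type with Levi rank at least $n-2$; for $\xi\in V'\cap\pa\Om$ and small $\de>0$ let $\xi_\de\in\Om$ denote the point at distance $\de$ from $\pa\Om$ along the inner normal at $\xi$. The two estimates are: (a) if $w\in Q(\xi,\de)$ then $d_\Om(\xi_\de,w)\le C_0$; and (b) $B_\Om(\xi_\de,A)\subset Q\big(\xi,e^{c_0A}\de\big)$ for every $A>0$ and all small $\de$, where $C_0,c_0>0$ are independent of $\xi$ and $\de$. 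Estimate (a) is elementary: the normal form for the defining function of $\pa\Om$ near $\ze_0$ recalled above allows one, uniformly in $\xi$, to join $\xi_\de$ to any prescribed $w\in Q(\xi,\de)$ through $\Om$ by analytic discs adapted to the polyradii $\tau_k(\xi,\de)$ with uniformly bounded total Kobayashi length; together with the engulfing property this yields (a). Estimate (b) --- the lower bound for the Kobayashi metric with the correct anisotropic rate of blow-up --- is the essential point. Following \cite{Ch3} one constructs, uniformly in $\xi\in V'\cap\pa\Om$, a bounded plurisubharmonic function $\psi_\xi$ on $\Om\cap V$ whose complex Hessian at each point of $Q(\xi,\de)$ dominates the reciprocals $\tau_k(\xi,\de)^{-2}$ of the squared polyradii; this is exactly where the hypothesis that the Levi rank is at least $n-2$ is used, since it confines the genuinely degenerate direction to $z_1$ and keeps the weights $\tau_k$ under control. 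Sibony's lemma then bounds the Kobayashi metric of $\Om\cap V$ from below by the square root of that Hessian, and integrating along any path that leaves $Q\big(\xi,e^{c_0A}\de\big)$ forces Kobayashi length at least $A$. Finally one passes from $d_{\Om\cap V}$ to $d_\Om$: since $\pa\Om$ is $C^{\infty}$ smooth pseudoconvex of finite type near $\ze_0$, these Kobayashi distances agree up to a bounded additive error for points near $\ze_0$ (localization of the Kobayashi metric, in the spirit of \cite{FS},\cite{Ch2}), so (a) and (b) carry over to $\Om$ at the cost of enlarging $C_0$.

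Granting (a) and (b), I would conclude as follows. Using the blow-up of the Kobayashi metric of $\Om$ at the finite type point $\ze_0$, choose $\ep(K)>0$ so small that $\{\vert z-\ze_0\vert<\ep(K)\}\subset V'$ and that, for every $\eta$ in this ball, the Kobayashi ball $B_\Om\big(\eta,L(K)+C_0\big)$ is contained in $V'$, so that all of (a), (b) apply to it. Now let $\eta=F(z_0)$ with $\vert\eta-\ze_0\vert<\ep(K)$ and $\eta\in Q(\xi,\ep)$, $0<\ep<\ep(K)$, for the given $\xi\in V\cap\pa\Om$; since $Q(\xi,\ep)$ shrinks to a point as $\ep\to0$, the two conditions force $\vert\xi-\ze_0\vert$ to be small, so after shrinking $\ep(K)$ we may assume $\xi\in V'\cap\pa\Om$. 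By (a) with $w=\eta$ we get $d_\Om(\eta,\xi_\ep)\le C_0$, hence
\[
B_\Om\big(\eta,L(K)\big)\ \subset\ B_\Om\big(\xi_\ep,L(K)+C_0\big)\ \subset\ Q\big(\xi,\,e^{c_0(L(K)+C_0)}\ep\big)
\]
by (b). Taking $C(K):=e^{c_0(L(K)+C_0)}$ and recalling $F(K)\subset B_\Om(\eta,L(K))$ from the first paragraph gives $F(K)\subset Q(\xi,C(K)\,\ep)$, as required. I expect the main obstacle to be estimate (b): constructing the family $\{\psi_\xi\}$ of bounded plurisubharmonic functions with the sharp weighted Hessian growth \emph{uniformly} as $\xi$ ranges over a neighbourhood of $\ze_0$. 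This is precisely where the finite type condition, the Levi rank bound $n-2$, and Cho's adapted coordinates are indispensable, and it is the origin of all the uniformity in the constants $\ep(K)$ and $C(K)$.
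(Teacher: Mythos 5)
Your overall strategy --- Kobayashi distance decreasing to trap $F(K)$ in a Kobayashi ball of radius $L(K)$ about $F(z_0)$, plus a uniform-in-$\xi$ lower bound for the Kobayashi metric in terms of the polyradii $\tau_k(\xi,\delta)$ to convert Kobayashi balls into the polydiscs $Q$ --- is in the same spirit as what the paper relies on (it quotes the proposition from \cite{TT} and, in Remark 2.5, notes that the proof of Lemma 2.4 adapts by covering $K$ with balls and using the engulfing property; the key inputs there are precisely the $M$-metric lower bound on the Kobayashi metric and the quantitative attraction lemma, i.e.\ lemma 3.6 of \cite{TT}). However, your step (a) is false as stated: $Q(\xi,\delta)=\Phi_\xi^{-1}(R(\xi,\delta))$ is a full neighbourhood of the \emph{boundary} point $\xi$, so $Q(\xi,\delta)\cap\Omega$ contains points $w$ arbitrarily close to $\partial\Omega$, and for such $w$ the distance $d_\Omega(\xi_\delta,w)$ is unbounded (near a smooth pseudoconvex finite type boundary piece the Kobayashi distance from a fixed interior point blows up at the boundary). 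Consequently the inclusion $B_\Omega(\eta,L(K))\subset B_\Omega(\xi_\epsilon,L(K)+C_0)$ breaks down exactly in the delicate case $\mathrm{dist}(\eta,\partial\Omega)\ll\epsilon$, which the hypotheses do not exclude: they only give $\eta=F(z_0)\in Q(\xi,\epsilon)$, not that the depth of $\eta$ is comparable to $\epsilon$.

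The gap is repairable inside your framework, and the repair essentially reproduces the argument of \cite{TT}: do not route through the fixed interior point $\xi_\epsilon$. Instead set $\delta_\eta=\epsilon(\eta)\approx\mathrm{dist}(\eta,\partial\Omega)$, which satisfies $\delta_\eta\lesssim\epsilon$ because the defining function varies by at most a multiple of $\epsilon$ on $Q(\xi,\epsilon)$, and let $\eta'$ be the associated boundary point. Your estimate (b) --- which is the integrated form of the stable bound $K\gtrsim M$ and is the genuinely hard point, established in \cite{TT} (lemma 3.6 and theorem 3.10 there; compare the scaled version in theorem 2.3 of this paper) --- gives $B_\Omega(\eta,L(K))\subset Q\big(\eta',e^{c_0L(K)}\delta_\eta\big)\subset Q\big(\eta',e^{c_0L(K)}\epsilon\big)$, and then the engulfing property (using $\eta\in Q(\xi,\epsilon)$, hence $\eta'$ lies in $Q(\xi,C\epsilon)$, together with monotonicity in $\delta$) places this inside $Q(\xi,C(K)\epsilon)$. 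With that substitution for (a), the rest of your outline (localization of the Kobayashi distance near the peak point $\zeta_0$, shrinking $\epsilon(K)$ so that $\xi$ is forced close to $\zeta_0$) is fine; but note that (b) is exactly the content of the attraction lemma you were trying to bypass, so the Kobayashi-ball reformulation does not actually buy anything beyond quoting \cite{TT}, which is what the paper does.
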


\no Now let $\{ K_j \}$ be an increasing sequence of relatively compact domains that exhaust $D_{\infty}$ such that each contains the base point $('0, -1)$. Fix $K = K_{\mu}$ and 
let ${\ti f}^{\nu} = f \circ (T^{\nu})^{-1}$. Then
\[
{\ti f}^{\nu}('0, -1) = f('0, -\de_{\nu}) = f(p^{\nu}) = p'^{\nu}
\]
and hence ${\ti f}^{\nu}('0, -1) \ra 0 \in \pa D'$. In particular, ${\ti f}^{\nu}('0, -1) = p'^{\nu} \in Q(w'^{\nu}, 2 \ga_{\nu})$ for all large $\nu$, by the construction of 
these distorted polydiscs. By the previous proposition
\[
{\ti f}^{\nu} (K) \subset Q(w'^{\nu}, C(K) \ga_{\nu})
\]
and therefore
\[
F^{\nu}(K) = B^{\nu} \circ g^{\nu} \circ {\ti f}^{\nu} (K) \subset B^{\nu} \circ g^{\nu}(Q(w'^{\nu}, C(K) \ga_{\nu}).
\]
However,
\[
B^{\nu} \circ g^{\nu}(Q(w'^{\nu}, C(K) \ga_{\nu}) = B^{\nu} (R(w'^{\nu}, C(K) \ga_{\nu}))
\]
which by definition is contained in a polydisk around the origin with polyradii 
\[
r_k = \tau_k(w'^{\nu}, C(K) \ga_{\nu}) / \tau_k(w'^{\nu}, \ga_{\nu})
\]
for $1 \le k \le n$. Note that $r_n = C(K)$ and that $r_k = (C(K))^{1/2}$ for $2 \le k \le n - 1$. Since $C(K) > 1$, and this may be assumed without loss of 
generality, the definition of $\tau(\ze, \de)$ shows that for $\de'< \de''$,
\[
(\de'/ \de'')^{1/2} \tau(\ze, \de'') \le \tau(\ze, \de') \le (\de' / \de'')^{1/2m'} \tau(\ze, \de'').
\]
Therefore
\[
\tau_1(w'^{\nu}, C(K) \ga_{\nu}) / \tau_1(w'^{\nu}, \ga_{\nu}) \le (C(K))^{1/2}
\]
and hence $r_1 \le (C(K))^{1/2}$. Thus $\{ F^{\nu} \}$ is uniformly bounded on each compact set in $D_{\infty}$ and is therefore normal. Let $F : D_{\infty} \ra D'_{\infty}$ be a 
holomorphic limit of some subsequence in $\{ F^{\nu} \}$ and since $F^{\nu}('0, -1) = ('0, -1)$ for all $\nu$ by construction, it follows that $F('0, -1) = ('0, -1)$. The maximum 
principle shows that $F(D_{\infty}) \subset D'_{\infty}$.

\medskip

Let $R > 0$ be arbitrary and fix $z \in D_{\infty} \cap B(0, R)$. Note that since $f$ preserves the distance to the boundary, it follows that
\[
\big \vert (r'_{\nu} \circ F^{\nu})(z) \big \vert = \ga_{\nu}^{-1} \big \vert (r'\circ f \circ T_{\nu}^{-1})(z) \big \vert \lesssim (\de_{\nu} / \ga_{\nu}) \big \vert 
r_{\nu}(z) \big \vert
\]
and hence
\[
\big \vert r'_{\infty} \circ F(z) \big \vert \lesssim \big \vert r_{\infty}(z) \big \vert
\]
since the constants appearing in the first estimate are independent of $\nu$. This shows that the cluster set of a finite boundary point of $D_{\infty}$ does not intersect 
$D'_{\infty}$ and therefore $F$ is non-degenerate. 

\subsection{Boundedness of $F^\nu(0)$}

A crucial point in the proof of theorem 1.1 is to show that near the origin, the family $\{ F^\nu \}$ of the scaled maps is uniformly H\"{o}lder continuous upto the 
boundary i.e., the H\"{o}lder constant and exponent are stable under the scaling. A first step in establishing this is to show that the images of the origin under these scaled 
maps is stable. 

\begin{prop}
The sequence $\{F^\nu(0)\}$ is bounded.
\end{prop}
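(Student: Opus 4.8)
The plan is to unwind the definition $F^\nu = B^\nu \circ g^\nu \circ f \circ (T^\nu)^{-1}$ at the single point $0$. Since $T^\nu$ is a diagonal dilation it fixes the origin, so $(T^\nu)^{-1}(0) = 0$; since $f(0) = 0'$ and $g^\nu = \Phi_{w'^\nu}$, this gives
\[
F^\nu(0) = B^\nu \big( \Phi_{w'^\nu}(0') \big).
\]
Thus the whole statement reduces to estimating the point $\Phi_{w'^\nu}(0') \in \mathbb{C}^n$ and then seeing how the coordinate dilation $B^\nu$ acts on it.

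First I would bound $\big| \Phi_{w'^\nu}(0') \big|$. Because $\Phi_{w'^\nu}(w'^\nu) = 0$, we may write $\Phi_{w'^\nu}(0') = \Phi_{w'^\nu}(0') - \Phi_{w'^\nu}(w'^\nu)$. The automorphisms $\Phi_\zeta$ are polynomial of degree bounded independently of $\zeta$ and converge to the identity uniformly on compacta as $\zeta \to 0'$, hence in $C^1$ on a fixed neighbourhood of $0'$; so their first derivatives are uniformly bounded there, and the mean value inequality along the segment joining $w'^\nu$ to $0'$ yields $\big| \Phi_{w'^\nu}(0') \big| \lesssim \big| w'^\nu \big|$ for all large $\nu$. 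On the other hand $w'^\nu$ differs from $p'^\nu = f(p^\nu)$ only in the last coordinate, by the amount $\gamma_\nu$; since $f$ is Lipschitz near $0$ with $f(0) = 0'$ we get $\big| p'^\nu \big| \lesssim \big| p^\nu \big| = \delta_\nu$, and $\gamma_\nu \approx \delta_\nu$, so $\big| w'^\nu \big| \lesssim \gamma_\nu$ and therefore $\big| \Phi_{w'^\nu}(0') \big| \lesssim \gamma_\nu$.

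Now I would apply $B^\nu$. Writing $\Phi_{w'^\nu}(0') = (u_1^\nu, \ldots, u_n^\nu)$ with $|u_k^\nu| \lesssim \gamma_\nu$, the $k$-th component of $F^\nu(0)$ is $u_k^\nu / \tau_k^\nu$, where $\tau_n^\nu = \gamma_\nu$, $\tau_j^\nu = \gamma_\nu^{1/2}$ for $2 \le j \le n-1$, and $\tau_1^\nu = \tau(w'^\nu, \gamma_\nu) \gtrsim \gamma_\nu^{1/2}$ by the lower bound $\delta^{1/2} \lesssim \tau(\zeta, \delta)$ recorded in \S 2.1. Hence $\big| (F^\nu(0))_n \big| \lesssim 1$ while $\big| (F^\nu(0))_k \big| \lesssim \gamma_\nu^{1/2} \to 0$ for $1 \le k \le n-1$, so $\{F^\nu(0)\}$ is bounded (and in fact converges to a point of the form $('0, t_\infty)$ with $t_\infty \le 0$). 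Equivalently, these estimates say precisely that $0' \in Q(w'^\nu, C\gamma_\nu)$ for a uniform constant $C$, whence $F^\nu(0) \in B^\nu\big( R(w'^\nu, C\gamma_\nu) \big)$, a polydisc whose polyradii $\tau_k(w'^\nu, C\gamma_\nu)/\tau_k(w'^\nu, \gamma_\nu)$ are bounded — the same computation already used in \S 2.1 to control $F^\nu(K)$.

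The argument is short; the only non-formal inputs are the $C^1$ control of $\Phi_\zeta \to \mathrm{id}$ near $0'$ (immediate from uniform convergence of polynomials of bounded degree) and the distance comparison $|w'^\nu| \lesssim \gamma_\nu$, which is exactly where the Lipschitz hypothesis on $f$ is used. Neither presents a real obstacle given the constructions of \S 2.1, so I expect the proof to be essentially the bookkeeping outlined above.
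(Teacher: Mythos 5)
Your argument is correct and is essentially the paper's own proof: reduce $F^\nu(0)$ to $B^\nu\big(g^\nu(0')\big)$, show $\vert g^\nu(0')\vert \lesssim \vert w'^\nu\vert \lesssim \gamma_\nu$ using the Lipschitz extension of $f$ together with $\delta_\nu \approx \gamma_\nu$, and then divide by the polyradii via $\tau_1^\nu \gtrsim \gamma_\nu^{1/2}$, $\tau_j^\nu = \gamma_\nu^{1/2}$, $\tau_n^\nu = \gamma_\nu$ — the only cosmetic difference being that you obtain $\vert \Phi_{w'^\nu}(0')\vert \lesssim \vert w'^\nu\vert$ from uniform $C^1$ control of $\Phi_\zeta$ in the spatial variable (using $\Phi_{w'^\nu}(w'^\nu)=0$), whereas the paper gets it from the smooth dependence of $\Phi_\zeta$ on the parameter $\zeta$ with $\Phi_0 = \mathrm{id}$. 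Only your parenthetical aside is slightly off: the last coordinate of the limit need only be bounded, and since $F^\nu(0) \in \partial D'_\nu$ any limit point $('0,t_\infty)$ satisfies $r'_\infty('0,t_\infty)=0$, i.e. $\Re t_\infty = 0$, rather than $t_\infty \le 0$; this plays no role in the proposition.
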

\noindent {\it Proof.} Recall the scaled maps
\[
F^\nu = B^\nu \circ g^\nu \circ f \circ (T^\nu)^{-1}
\]
where $B^\nu,T^\nu$ were linear maps and $f(0)=0$. So $F^\nu(0)= (B^\nu \circ g^\nu \circ f)(0)=(B^\nu \circ g^\nu)(0)$. Now let us recall the explicit form of the map $g^\nu = \Phi_{w'^\nu}$ from \cite{TT}, which is a polynomial automorphism of $\mathbb{C}^n$ that reduces the defining function to a certain normal form
as stated in section 2.1. Let us recall this reduction procedure for any given domain $\Omega$ in $\mathbb{C}^n$ that is smooth pseudoconvex of finite type $2m$ and of Levi rank (at least) $n-2$ on a (small) open boundary piece $\Sigma$ in $\partial \Omega$ and containing the origin, say. Let $r$ be a smooth defining function for $\Sigma$ with $\partial r /\partial z_n(z) \ne 0$ for all $z$ in a small neighbourhood $U$ in $\mathbb{C}^n$ of $\Sigma$ such that the vector fields
\begin{equation*} 
L_n=\partial /\partial z_n,\ L_j=\partial / \partial z_j + b_j(z,\bar{z})\partial / \partial z_n
\end{equation*}
where $b_j = \big( \partial r/ \partial z_n \big)^{-1} \partial r / \partial z_j $, form a basis of $\mathbb{C}T^{(1,0)}(U)$ and satisfy $L_j r\equiv 0$ for $1 \leq j\leq n-1$ and 
$\partial\bar{\partial} r(z) (L_i,\bar{L}_j)_{2\leq i,j \leq n-1}$
has all its eigenvalues positive for each $z \in U$.
\medskip\\
The reduction now consists of five steps -- for $\zeta\in U$, the map $\Phi_\zeta = \phi_5 \circ \phi_4 \circ \phi_3 \circ \phi_2 \circ \phi_1 $ where each $\phi_j$ is described below.\medskip \\
$\phi_1$ is the affine map given by
\begin{equation*}
\phi_1(z_1,\ldots,z_n) = (z_1 - \zeta_1, \ldots, z_{n-1}-\zeta_{n-1},z_n+ \sum\limits_{j=1}^{n-1}b_j^\zeta z_j
-( \zeta_n + \sum\limits_{j=1}^{n-1} b_j^\zeta \zeta _j )).
\end{equation*}
where the coefficients $b_j^\zeta = b_j(\zeta,\bar{\zeta})$ are clearly smooth functions of $\zeta$ on $U$. Therefore, $\phi_1$ translates $\zeta$ to the origin and 
\[
r(\phi^{-1}(z))=r(\zeta)+ \Re z_n+ \text{terms of higher order}.
\]
where the constant term disappears when $\zeta\in \Sigma$.\medskip\\
The remaining reductions remove the occurrence of harmonic (not just pluriharmonic) monomials in the variables $z_1, \ldots ,z_{n-1}$ of weights upto one in the weighted homogeneous expansion of the defining function with respect to the weights given by the multitype for $\Sigma$, i.e., the variable $z_1$ is assigned a weight of $1/2m$, $z_n$ a weight of $1$ while the others are assigned $1/2$ each. 
Now, since the Levi form restricted to the subspace 
\[ 
L_*=\textrm{span}_{\mathbb{C}^n} \left< L_2,\ldots ,L_{n-1} \right>
\] 
of $T^{(1,0)}_{\zeta}(\partial \Omega)$ is positive definite, we may diagonalize it via a unitary transform $\phi_2$ and a dilation $\phi_3$ will then ensure that the quadratic part involving only $z_2,z_3,\ldots,z_{n-2}$ in the Taylor expansion of $r$ is $\vert z_2 \vert^2 + \vert z_3 \vert^2 + \ldots + \vert z_{n-2}\vert^2$. The entries of the matrix that represents the composite of the last two linear transformations are smooth functions of $\zeta$ and in the new coordinates still denoted by $z_1,\ldots z_n$, the defining function is in the form
\begin{multline}
r(z)=r(\zeta) + \Re z_n + \sum\limits_{\alpha=2}^{n-1}\sum\limits_{j=1}^{m}\Re\big(( a_j^\alpha z_1^j + b_j^\alpha \bar{z}_1^j )z_\alpha \big)+ \Re\sum\limits_{\alpha=2}^{n-1}c_\alpha z_\alpha^2 \\ 
+ \sum\limits_{2\leq j+k \leq 2m}a_{j,k}z_1^j \bar{z}_1^k + \sum\limits_{\alpha=2}^{n-1}\vert z_\alpha \vert^2 
+ \sum\limits_{\alpha=2}^{n-1}\sum_{\substack{j+k \leq m \\j,k>0}} \Re\big(b_{j,k}^\alpha z_1^j \bar{z}_1^k z_\alpha \big)\\
+ O(\vert z_n \vert \vert z \vert+ \vert z_* \vert^2 \vert z \vert + \vert z_* \vert \vert z_1 \vert^{m+1}
+ \vert z_1 \vert^{2m+1} )
\end{multline}
A change in the normal variable $z_n$ to absorb the pluriharmonic terms here i.e., $z_1^k, \bar{z}_1^k, z_\alpha ^2$ as well as $z_1^k z_\alpha, \bar{z}_1^k \bar{z}_\alpha$, 
can be done according to the following standard change of coordinates $\phi_4$ given by
\begin{equation*}
\begin{split}
z_j&=t_j \; \; (1\leq j\leq n-1), \\
z_n&=t_n- P_1(t_1,\ldots,t_{n-1})
\end{split}
\end{equation*}
where 
\[
P_1(t_1, \ldots , t_{n-1}) =\sum\limits_{k=2}^{2m}a_{k0} t_1^k - \sum\limits_{\alpha=2}^{n-1} \sum\limits_{k=1}^{m} a_k^{\alpha} t_\alpha t_1^k-\sum\limits_{\alpha=2}^{n-1} c_{\alpha}t_\alpha^2
\]
with coefficients that are smooth functions of $\zeta$.

\medskip

Finally, just as we absorbed into the simplest pure term $\Re z_n$ that occurs in the Taylor expansion, other pure terms not divisible by this variable in the last step, we may 
absorb into (some among) the simplest of non-pluriharmonic (and non-harmonic) monomials occurring there namely, $\vert z_\alpha \vert ^2$ where $2\leq \alpha \leq n-1$ other harmonic (but non-pluriharmonic) terms of degree at least two which are not divisible by them. Let us do this to those of weight at most one, remaining in (2.3) rewritten in the $t$-coordinates, 
which are of the form $\bar{t}_1^j t_\alpha$ by applying the transform $\phi_5$ given by
\begin{align*}
t_1&=w_1,\ t_n=w_n,\\
t_\alpha &=w_\alpha - P_2(w_1) \; \; (2\leq\alpha \leq n-1)
\end{align*}
where
\[
\shoveleft P_2(w_1)=\sum\limits_{k=1}^{m} b_k^\alpha w_1^k 
\]
with coefficients smooth in $\zeta$, as before (since all these coefficients are simply the derivatives of some order of the smooth defining function $r$ evaluated at $\zeta$). 
We then have the sought for simplification of the Taylor expansion. 

\medskip

Now, suppose that we already have the reduced form holding at one boundary point say the origin, then (since no further normalization would be required) $\Phi_0$ may be taken to be the identity. Note that the normalizing map $\Phi_\zeta$ is not uniquely determined -- even among the class of all maps of the same form -- owing to the (only) ambiguity in the choice of the diagonalizing map $\phi_2$; however, this choice can certainly be done in a manner such that the coefficients of that unitary matrix are smooth in $\zeta$ and satisfying the `initial condition' that $\phi_2$ for the origin is the identity. Thus in all, the map $\Phi _\zeta$ is smooth in the parameter $\zeta$ with $\Phi_0$ being the identity map. This implies that the family $\Phi_\zeta(\cdot)$ is uniformly Lipschitz (where $\zeta \in U$) and converges uniformly on compact subsets of $\mathbb{C}^n$ to the identity as $\zeta$ approaches the origin as mentioned in section 2.1. Next, the consequence of the simpler fact that the map $\Phi(\zeta,0)=\Phi_\zeta(0)$ is Lipschitz in a neighbourhood of the origin, to our setting is that 
\[
\vert g^\nu(0) \vert \leq C_1 \vert w'^\nu \vert
\]
with $C_1$ independent of $\nu$. Now
\[
F^\nu(0)= B^\nu \circ g^\nu (0)= \Big((\tau_1^\nu)^{-1}(g^\nu(0))_1,\ldots,(\tau_{n-1}^\nu)^{-1}(g^\nu(0))_{n-1},
(\tau _n^\nu)^{-1}(g^\nu(0))_n \Big).
\]
From the fact that $\tau^\nu_1 \gtrsim \gamma _\nu^{1/2}$ and $\tau^\nu_j=\gamma_\nu^{1/2}$ for $2\leq j \leq n-1$ while $\tau^\nu_n=\gamma _\nu$, we get that 
\[
\Big\vert \big(B^\nu \circ g^\nu(0)\big) \Big\vert \leq C_2 \vert w'^{\nu} \vert/ \gamma_\nu
\]
with $C_2$ again independent of $\nu$.
Now, since $f$ is Lipschitz upto $M$ and $\delta_\nu \lesssim \gamma_\nu$ we have
\begin{align*}
\vert w'^{\nu} \vert & \leq \textrm {dist} ( w'^{\nu},p'^{\nu}) + \textrm {dist} (p'^{\nu},0') \\
& = \textrm {dist}(p'^{\nu}, \partial D') + \vert f(p^\nu) - f(0) \vert \\
& \leq C_3 (\gamma _\nu + \vert p^\nu \vert) \\
& = C_3 (\gamma _\nu + \delta _\nu) \\
& \leq C_4 \gamma _\nu 
\end{align*}
with constants independent of $\nu$ as before, implying that $\{F^\nu(0)\}$ is bounded.


\subsection{Stability of the Kobayashi metric}
For $\Omega,\Sigma,U$ with $0 \in \Sigma$, as in the previous section, recall from \cite{TT} (see also \cite{Ber2}), the $M$-metric defined for $\zeta \in U \cap \Omega$ by
\[
M_{\Omega}(\zeta,X) = \sum\limits_{k=1}^{n} \vert \big(D\Phi_\zeta(\zeta) X \big)_k \vert / \tau_k(\zeta, \epsilon(\zeta)) 
=\big \vert D( B_\zeta  \circ \Phi _\zeta )(\zeta)(X) \big\vert_{l^1}
\]
with $B_\zeta = B_\zeta^{\epsilon(\zeta)}$ where $\epsilon(\zeta)>0$ is such that $\zeta+(0, \ldots, 0,\epsilon(\zeta))$ lies on $\Sigma$ and
\[
B_\zeta^\delta (z_1, \ldots ,z_n) = \big( (\tau_1)^{-1} z_1, \ldots, (\tau_n)^{-1} z_n \big)
\]
where $\tau_1 = \tau(\zeta,\delta)$, $\tau_j = \delta^{1/2}$ for $2 \leq j \leq n-1$ and $\tau_n= \delta$. Now let us get to our scaled domains $D'_\nu=B^\nu \circ g^\nu(D')$, where we recall $B^\nu=B_{p'^\nu}$ and $g^\nu=\Phi_{w'^\nu}$ and denote by ${}^\nu \Phi_\zeta$ and ${}^\nu B_\zeta$ the normalizing map and the dilation respectively, associated to $(D'_\nu, \zeta)$; we shall drop the left superscript $\nu$ for the domain $D'$. Since all the coefficients of the polynomial automorphisms ${}^\nu \Phi_\zeta(\cdot)$ depend smoothly on the derivatives of the defining functions $r_\nu$, which converge in the $C^\infty$-topology on $U$, there exists $L>1$ such that 
\begin{equation}
1/L < \vert \det\big( D({}^\nu \Phi_\zeta(z) \big) \vert <L
\end{equation}
for all $\nu$ and $\zeta \in U$. The main result of this section is the following stability theorem for the Kobayashi metric. 
\begin{thm}
There exists a neighbourhood $U$ of the origin such that  
\[
K_{D'_\nu}(z,X) \geq C M_{D'_\nu}(z,X)
\]
for all $z\in U\cap D'_\nu$, $\nu \gg 1$ where $C$ is a positive constant independent of $\nu$.
\end{thm}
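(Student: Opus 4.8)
The plan is to obtain the lower bound for $K_{D'_\nu}$ by a localization-plus-comparison argument in the spirit of \cite{TT} and \cite{Ber2}, exploiting that the scaled defining functions $r'_\nu$ converge in $C^\infty$ to $r'_\infty$ together with the uniform control (2.4) on the Jacobians of the normalizing maps ${}^\nu\Phi_\zeta$. First I would fix a small neighbourhood $U$ of the origin on which, for every $\nu\gg 1$, $U\cap\partial D'_\nu$ is smooth pseudoconvex of finite type $2m'$ with Levi rank at least $n-2$ and on which the vector fields $L_1,\dots,L_n$ and the normalization procedure of section 2.2 are available with coefficients bounded uniformly in $\nu$ (this is where the $C^\infty$-convergence $r'_\nu\to r'_\infty$ enters). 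The key reduction is the standard one: since the Kobayashi metric is invariant under the biholomorphisms ${}^\nu B_\zeta\circ{}^\nu\Phi_\zeta$, and since by definition $M_{D'_\nu}(\zeta,X)=\big|D({}^\nu B_\zeta\circ{}^\nu\Phi_\zeta)(\zeta)X\big|_{l^1}$, it suffices to prove a uniform-in-$\nu$ lower bound of the form $K_{\widetilde\Omega_\nu}(0,Y)\gtrsim |Y|_{l^1}$ at the distinguished point $0$ for the normalized domains $\widetilde\Omega_\nu={}^\nu B_\zeta\circ{}^\nu\Phi_\zeta(D'_\nu)$, where by construction $0\in\partial\widetilde\Omega_\nu$ is at unit distance from the base point and the defining function of $\widetilde\Omega_\nu$ is in the normal form (2.3) with all coefficients bounded by $1$ in modulus and with the $\tau^\epsilon_1$-type error terms uniformly small. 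By a normal families argument these normalized domains subconverge (in the local Hausdorff sense, with $C^\infty$ convergence of defining functions on compacta) to a model domain of the same type as $D'_\infty$, which is complete hyperbolic; the point is to make this convergence uniform enough to transfer a metric bound.

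The core estimate I would carry out by the analytic-disc method: given a holomorphic disc $\varphi:\Delta\to\widetilde\Omega_\nu$ with $\varphi(0)=z$ near $0$ and $\varphi'(0)=Y$, I bound $|Y|_{l^1}$ from above by a uniform constant times $1$ by testing $\varphi$ against suitable bounded plurisubharmonic (or holomorphic peak-type) functions on $\widetilde\Omega_\nu$ constructed from the normal form (2.3) — separately controlling the $z_1$-direction using the subharmonic polynomial $Q_{2m'}$ of degree $\le 2m'$ whose top coefficient is bounded below uniformly (because $A_{2m'}(w'^\nu)\gtrsim 1$), the $z_2,\dots,z_{n-1}$-directions using the strictly plurisubharmonic quadratic part $\sum|z_\alpha|^2$, and the $z_n$-direction using $\Re z_n$. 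The engulfing/attraction property of the polydiscs $Q(\zeta,\delta)$ (Proposition 2.2) gives the needed a priori confinement of a subdisc of $\varphi$ to a distinguished polydisc, which is exactly the scale-invariant input that converts disc estimates into the rescaled metric $M_{D'_\nu}$. Assembling the three directional bounds, reversing the normalization via ${}^\nu B_\zeta\circ{}^\nu\Phi_\zeta$, and using (2.4) to pass the $l^1$-norm through the Jacobian with constants independent of $\nu$, yields $K_{D'_\nu}(z,X)\ge C\,M_{D'_\nu}(z,X)$ on $U\cap D'_\nu$.

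I expect the main obstacle to be the \emph{uniformity in $\nu$} of the disc estimates near the weak direction $z_1$: while for a single finite-type domain the lower bound on the Kobayashi metric is classical (Catlin, and in this rank setting \cite{Ch3}), here one must show that the constant does not degenerate as $\nu\to\infty$, i.e. that the auxiliary plurisubharmonic functions can be chosen with $\nu$-independent bounds. This is precisely what the normalization is designed to deliver — after ${}^\nu\Phi_\zeta$ and ${}^\nu B_\zeta$ the "bad" coefficients are all $\le 1$ and the genuinely degenerate coefficient $a_{2m',0}$-analogue is bounded below — but making the peak-function construction robust under the $C^\infty$-convergence $r'_\nu\to r'_\infty$, uniformly for all base points $\zeta$ in $U\cap D'_\nu$ simultaneously, requires care; this is the step where I would invoke the constructions of \cite{TT} (adapting \cite{Ber2}) essentially verbatim, checking only that each constant produced there depends on the defining function through quantities that are stable along the scaling sequence. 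A secondary technical point is verifying that the Levi-rank $\ge n-2$ hypothesis persists uniformly on $U\cap\partial D'_\nu$ for large $\nu$, which again follows from $C^\infty$-convergence since the relevant eigenvalue bound is open.
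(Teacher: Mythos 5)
Your preparatory frame (localize near the origin, use the $C^\infty$-convergence $r'_\nu\to r'_\infty$ and the Jacobian bound (2.4) for stability of the normalizations ${}^\nu\Phi_\zeta$, reduce by invariance of $K$ under ${}^\nu B_\zeta\circ{}^\nu\Phi_\zeta$ to a unit-scale estimate, and keep track of $A_{2m'}$ being bounded below) matches the set-up of the paper, but the core step is missing. The mechanism you propose --- testing a disc against $Q_{2m'}$, $\sum_\alpha|z_\alpha|^2$ and $\Re z_n$ ``separately'' --- does not deliver the weak-direction bound with a constant independent of $\nu$ and of the base point $\zeta$: after normalizing at $\zeta$ and dilating by $\tau(\zeta,\epsilon(\zeta))$ one only knows that all coefficients of the normalized polynomial are at most $1$ in modulus, with some coefficient of a priori unknown degree $2\le l\le 2m'$ comparable to $1$; this two-parameter family of unit-scale models is not compact in any useful sense, and extracting from it a uniform lower bound for $K$ at the point $('0,-1)$ is precisely the hard point. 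In the literature this is done either by a Catlin/Cho-type construction of bounded plurisubharmonic functions with uniformly large Hessian on the polydiscs $Q(\zeta,\delta)$ (which you do not carry out, and whose uniformity in $\nu$ would itself need proof), or by Berteloot's Bloch-principle argument; your fallback of invoking the constructions of \cite{TT} ``essentially verbatim, checking the constants'' defers to the second mechanism, whose stable version is the actual content of the paper's proof rather than a routine bookkeeping of constants.

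Concretely, the paper argues by contradiction: if the estimate fails, there are $\zeta^j\to 0$ and discs $f_j:\Delta\to U\cap D'_j$ with $M_{D'_j}\big(f_j(0),f'_j(0)\big)=R_j\ge j^2$. After (i) a uniform localization lemma for discs (Lemma 2.4, proved with the plurisubharmonic peak function at $0\in\partial D'$ and Montel), (ii) a stable engulfing property for the polydiscs $Q^\nu$ (Lemma 2.8, resting on the uniform convergence $A^\nu_l\to A^\infty_l$ and the comparability (2.9) of the $\tau^\nu$'s, together with (2.12)), and (iii) a stable boundary Schwarz lemma (Lemma 2.7), one applies Berteloot's renormalization lemma (Lemma 2.9) to recentre at points $a_j$ and dilate by $2A^{-1}M_j(a_j)$, obtaining maps $h_j$ on discs of radius tending to infinity with values in normalized domains converging to a model $\tilde D'_\infty$ of the same form as $D'_\infty$, and with $|h'_j(0)|_{l^1}\ge A/2$ up to the uniform Jacobian bounds; the limit is a nonconstant entire curve in $\tilde D'_\infty$, contradicting its Brody hyperbolicity. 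None of these steps --- in particular the renormalization producing the entire curve and the hyperbolicity of the limit model, which is where the uniform constant really comes from --- appears in your sketch, so as written the proposal asserts the key estimate rather than proving it. If you prefer a genuinely direct route, you would have to quantify the corank-one metric estimates of Cho/Herbort and verify that their constants depend only on quantities stable under $r'_\nu\to r'_\infty$; that is a legitimate alternative, but it requires the actual plurisubharmonic weight construction, not just the three ``directional'' functions named above.
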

\noindent The proof of this requires two steps and follows the lines of the argument presented in \cite{TT}. The first step consists in uniformly localizing the Kobayashi metric of the scaled domains near the origin which translates to verifying the following uniform version of the attraction of analytic discs near a local plurisubharmonic peak point.
\begin{lem}
There exists a neighbourhood $V \subseteq U$ of the origin and $\delta>0$ independent of $j$ such that for any $j$ large enough and any analytic disc $f:\Delta\rightarrow D'_j$ with $f(0) = \zeta\in V \cap D'^j$, we have 
\[
f(\Delta_\delta) \subseteq U \cap D'_j 
\]
\end{lem}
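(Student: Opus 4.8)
The plan is to prove Lemma 2.4 by adapting the classical ``attraction of analytic discs'' argument (as in \cite{BerC}, \cite{Ber2}) to the scaled family $D'_j$, with the key point being that all estimates are uniform in $j$. The main tool is a family of local plurisubharmonic peak functions at the origin for the domains $D'_j$ whose peaking behaviour is quantitatively stable. Recall that $D'_j = (B^j\circ g^j)(D')$ where $D'$ is $C^\infty$ smooth pseudoconvex of finite type $2m'$ near $0'$ with Levi rank $\ge n-2$; the defining functions $r'_j = \ga_j^{-1}\, r'\circ(B^j\circ g^j)^{-1}$ converge in $C^\infty_{loc}$ to $r'_\infty$. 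First I would fix a small ball $B(0,\rho)$ and, using the finite-type/plurisubharmonicity structure recorded in the normal form (2.3), construct for each $j \gg 1$ a function $\psi_j$ continuous on $\ov{D'_j}\cap B(0,\rho)$, plurisubharmonic on $D'_j\cap B(0,\rho)$, with $\psi_j(0) = 0$ and $\psi_j(z) \le -c\,|z|^{\be}$ for $z\in \ov{D'_j}\cap B(0,\rho)$ with a constant $c>0$ and exponent $\be>1$ both independent of $j$. Such $\psi_j$ can be obtained from the bumping construction for finite-type pseudoconvex domains (Catlin, or the local peak functions of \cite{Ch2}) applied to $D'$, then transported by $B^j\circ g^j$; the uniform convergence of $r'_j$ guarantees the constants do not degenerate. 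A convenient alternative is to build the peak function directly on the limit $D'_\infty = \{2\Re z_n + Q_{2m'}(z_1,\ov z_1) + |z_2|^2+\cdots+|z_{n-1}|^2 < 0\}$, which is an explicit rigid polynomial domain of finite type, and then check by the $C^\infty_{loc}$ convergence that it serves (after a harmless perturbation of constants) as a peak function for $D'_j$ for all large $j$.

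Next I would run the disc-attraction argument. Let $f:\De \ra D'_j$ be holomorphic with $f(0) = \ze \in V\cap D'_j$, where $V\Subset B(0,\rho)$ is to be chosen. Consider the subharmonic function $u = \psi_j\circ f$ on $\De$ wherever $f$ stays in $B(0,\rho)$; the upper bound $\psi_j\le -c|z-?|^\be$ centered appropriately at the peak point, together with $\psi_j \le 0$, gives a lower bound for $u(0)$ in terms of $\mathrm{dist}(\ze, 0)$, hence $u(0)$ is close to $0$ when $\ze$ is close to the origin. On the other hand, if $f$ were to escape $B(0,\rho)$ somewhere on a subdisc $\De_\delta$, then at such a point $f$ is a fixed distance away from the peak point, so $\psi_j\circ f \le -c'$ there with $c'>0$ independent of $j$. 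Feeding this into the sub-mean-value inequality / Harnack-type estimate for the negative subharmonic function $u$ on $\De$ — more precisely, using that $u(0)$ controls $u$ on $\De_\delta$ from below via the Poisson kernel only if $u$ is not too negative on a set of positive measure — produces a contradiction once $\ze$ is taken close enough to the origin, uniformly in $j$. The standard way to make this rigorous is the two-constants theorem: $u(0) \le (1-\om)\sup_{\De_\delta} u + \om \sup_{\partial \De_\delta(\text{bad arc})} u$ where $\om = \om(\delta)>0$ is harmonic measure; choosing $\delta$ so that $\om(\delta)$ times $c'$ dominates, and then $V$ so that $u(0) > -\om(\delta) c'$, forces $f(\De_\delta)\subset B(0,\rho)$, and since $f(\De_\delta)\subset D'_j$ trivially, we get $f(\De_\delta)\subset U\cap D'_j$ after renaming $U\cap B(0,\rho)$ as the neighbourhood in the statement.

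The main obstacle I anticipate is the uniformity in $j$ of the peak function estimate: a single peak function for $D'$ transported by $B^j\circ g^j$ does not obviously keep its Hölder exponent and constant, because the dilations $B^j$ are highly anisotropic and blow up distances unevenly in the $z_1$ versus $z_*$ versus $z_n$ directions. This is exactly where one must exploit that $D'_j$ are defined by $r'_j$ which are \emph{normalized} (the coefficients of $Q_\nu$ and $S^\al_\nu$ are bounded by $1$, and the $S^\al_\nu$-coefficients even decay like $(\tau^\nu_1)^\ep$ by \cite{Ch3}) so that the $D'_j$ are uniformly of finite type $2m'$ with uniformly controlled geometry; equivalently one works with the model $D'_\infty$ and perturbs. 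Concretely, I would invoke the construction of \cite{Ch2} or \cite{FS}, which produces peak functions whose constants depend only on a finite jet of the defining function and an upper bound for the type, and then note that $\{r'_j\}$ have such bounds uniformly by the convergence $r'_j \to r'_\infty$ in $C^\infty_{loc}$. Once this uniform peak function is in hand, the attraction argument above is routine and follows \cite{TT} essentially verbatim; the remaining bookkeeping (choice of $\delta$, $V$, and absorbing the harmless fact that the peak point of $\psi_j$ is $0$ for every $j$) is straightforward.
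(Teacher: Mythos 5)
Your overall strategy (uniform attraction via a family of plurisubharmonic peak functions for the scaled domains plus a two-constants/harmonic-measure argument) is a legitimate alternative route, but as written it has a genuine gap at its crux: the existence of peak functions $\psi_j$ for $D'_j$ at the origin with constants independent of $j$ is asserted, and neither of your two suggested constructions delivers it. Transporting peak functions from $D'$ by $B^j\circ g^j$ fails quantitatively: the scaled origin corresponds to the boundary points $w'^j\in\partial D'$, and the inverse dilations $(B^j)^{-1}$ contract Euclidean distances by factors ranging between $\gamma_j$ and roughly $\gamma_j^{1/2m'}$, so the transported bound $-\vert S_j^{-1}(z)-w'^j\vert^{\beta}$ tends to $0$ for each fixed $z\neq 0$; thus the uniform negativity away from the peak point --- exactly what your harmonic-measure step needs --- degenerates as $j\to\infty$ (you name this obstacle yourself, but the remedy you offer does not resolve it). Building the peak function on the limit $D'_\infty$ and ``perturbing'' also does not work as stated: the known constructions at weakly pseudoconvex finite type points (\cite{Ch2}, \cite{Yu2}) give functions plurisubharmonic on the domain side only, and since $D'_j\not\subset D'_\infty$, a function plurisubharmonic merely on $D'_\infty\cap B(0,\rho)$ need not be plurisubharmonic, nor have the right sign, on $D'_j\cap B(0,\rho)$. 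To transfer it you would need a peak function plurisubharmonic on a full neighbourhood of $\overline{D'_\infty}\cap B(0,\rho)$ (a bumped peak function), or else a direct construction on each $D'_j$ with uniform constants in the spirit of Berteloot--Coeur\'e \cite{BerC}; either is a substantive lemma, not routine bookkeeping from $C^\infty_{loc}$ convergence of $r'_j$. Two smaller points: $\psi_j\circ f$ is subharmonic only on the part of $\Delta$ where $f$ stays in $B(0,\rho)$, so the standard max-gluing globalization is needed before harmonic measure can be applied; and it is a lower bound $\psi_j\ge -M\vert z\vert^{\alpha}$, not the upper bound, that makes $\psi_j(\zeta)$ uniformly close to $0$ when $\zeta$ is close to the origin.

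For comparison, the paper avoids uniform peak functions on the scaled family altogether: it pulls the discs back to the fixed domain $D'$ via $S_j^{-1}=(B^j\circ\Phi_{w'^j})^{-1}$ (equicontinuous at the origin), uses the single plurisubharmonic peak point $0'\in\partial D'$ for the soft localization, then the quantitative attraction lemma (lemma 3.6 of \cite{TT}) in the $M$-metric to trap $\tilde f_j(\Delta_{r_0})$ in a distinguished polydisc $Q\big(\tilde f_j(0),\epsilon(\tilde f_j(0))\big)$, and pushes forward using the stability of these polydiscs under the scaling maps (corollary 2.8 of \cite{Ch3} together with the Jacobian bound (2.4)) to get uniform boundedness of the $f_j$ on compact subdiscs of $\Delta_{r_0}$. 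Only after extracting a limit disc $f_\infty:\Delta_{r_0}\to\overline{D'}_\infty$ with $f_\infty(0)=0$ does it invoke a peak function, and then only for the single limit model $D'_\infty$, to force $f_\infty$ to be constant and contradict the escape assumption. If you wish to keep your route, the uniform peak-function lemma for the family $\{D'_j\}$ is the real content you are currently assuming and would have to be proved.
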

\begin{proof} 
If the lemma were false, then for any neighbourhood $V \subset U$ and any $\delta>0$, there exists a sequence of analytic discs $f_j: \Delta \rightarrow  D'_j$ with $f_j(0) = \zeta^j\in V \cap D'_j $, converging to the origin but $f_j(\Delta_\delta) \nsubseteq U$. Consider 
\[ 
\tilde{f}_j=S_j^{-1} \circ f_j: \Delta \rightarrow D'_j
\] 
where $S_j=B^j \circ \Phi_{w'^j}$ are the scaling maps. Then $\tilde{\zeta}^j = \tilde{f}_j(0)=S_j^{-1}(\zeta^j)$ converges to the origin. Indeed, the family $ S_j^{-1} = \Phi_{w^j}^{-1} \circ (B^j)^{-1} $ is equicontinuous at the origin since their derivatives are
\[
D(\Phi_{w^j}^{-1}) \circ (B^j)^{-1}
\]
with $D (\Phi_{w^j}^{-1})$ being bounded in a neighbourhood of the origin and $(B^j)^{-1}$ converging to the zero map. Now, $0\in\partial D'$ being a plurisubharmonic peak point by \cite{Ch2}, the simplest version of the attraction property of analytic discs (see for instance lemma 2.1.1 in \cite{G2}), gives for all $j \gg 1$ that
\[
\tilde{f}_j(\Delta_\gamma)\subseteq U \cap D'
\] 
for some $\gamma\in (0,1)$. Rescale the $\tilde{f}_j$'s, so that we may take $\gamma=1$.  Now recall the sharper version of the attraction property given by lemma 3.6 of \cite{TT} namely,
\begin{lem}
Let $W \Subset U$ be a neighbourhood of the origin. There exist constants $\alpha,A \in [0,1]$ and $K \geq 1$ such that for any analytic disc $f : \Delta_N \to U$ that satisfies $M_{D'}(f(t), f'(t)) \leq A$, $f(0) \in W$ and 
$K^{N-1} \epsilon(f(0))< \alpha$ also satisfies,
\[
\overline{f(\Delta_N)} \subset Q \Big( f(0), K^N \epsilon(f(0)) \Big)
\] 
\end{lem}
We intend to apply this lemma to the analytic discs $\tilde{g}_j(t)=\tilde{f}_j(r_0 t)$ for which we only need to verify 
\[
M_{D'} \big( \tilde{g}_j (t),\tilde{g}_j'(t) \big) \leq A 
\]
since $\tilde{g}_j(0)=\tilde{f}_j(0)$ converges to the origin.
Then 
\begin{eqnarray*}
M_{D'}\big(\tilde{g}_j(t),\tilde{g}_j '(t)\big) &=& M_{D'}\big(\tilde{f}_j(r_0 t),r_0 \tilde{f}_j'(r_0 t)\big)\\
&=& r_0 M_{D'}\big(\tilde{f}_j(r_0 t), \tilde{f}_j'(r_0 t)\big)\\
&\leq& r_0 C_5 K_{D'}\big( \tilde{f}_j(r_0 t), \tilde{f}_j'(r_0 t) \big)
\end{eqnarray*}
by theorem 3.10 of \cite{TT}, where $C_5$ is a positive constant independent of $j$. At the last step, we use the fact that $ \tilde{f}_j$'s map $\Delta$ into so small a neighbourhood of the origin where $K_{D'}\approx M_{D'}$. As the Kobayashi metric decreases under holomorphic mappings, we have
\begin{equation*}
r_0 C_5 K_{D'}\big( \tilde{f}_j(r_0t), \tilde{f}_j'(r_0t) \big) \leq r_0 C_5 K_{\Delta}\big( t, \partial/\partial t \big)
\end{equation*}
So, if we choose $r_0$ such that 
\begin{equation*}
r_0 C_5 \Big( {\displaystyle\sup_{ |t| \leq r_0}} K _{\Delta} (t, \partial/\partial t) \Big) \leq A,
\end{equation*} 
we will have completed the required verification of the hypothesis for the discs $g_j: \Delta\rightarrow D' \cap U$ and the aforementioned lemma gives
\begin{equation}
\tilde{f}_j(\Delta_{r_0})= \tilde{g}_j(\Delta)  \subset Q \big( \tilde{g}_j(0), \epsilon( \tilde{g}_j(0)) \big) = Q \big( \tilde{f}_j(0), \epsilon(\tilde{f}_j(0)) \big).
\end{equation}
At this point observe also that $r_0$ does not depend on $\delta$. Next, fix any compact subdisc $K \subset \Delta_{r_0}$ and note by (2.5) that,
\[
\tilde{f}_j(K) \subset Q\big( \tilde{f}_j(0),\epsilon ( \tilde{f}_j(0) ) \big)
\]
and subsequently,
\begin{align}
(S_j \circ \tilde{f}_j)(K) &\subset S_j \big( Q( \tilde{f}_j(0)),  \epsilon ( \tilde{f}_j(0)) \big) \nonumber\\
&= ( B^j \circ \Phi_{w'^j} ) \circ \big( B_{\tilde{\zeta}^j} \circ \Phi_{\tilde{\zeta'}^j} \big)^{-1}
(\Delta^n) \nonumber \\
&= \Big( B_{p'^j} \circ \big( \Phi_{w'^j} \circ \Phi_{\tilde{\zeta'}^j}^{-1} \big) \circ B_{\tilde{\zeta}^j}^{-1} \Big) (\Delta^n)
\end{align}
where $\tilde{\zeta}'^j = \tilde{\zeta}^j + (0, \ldots,0 , \epsilon(\tilde{\zeta}^j))$. Now the fact that $\epsilon(\tilde{\zeta}^j) \approx \epsilon(p^j)$, as both $p^j$ and $\tilde{\zeta}^j$ converge to the origin and corollary 2.8 of \cite{Ch3} -- the fact that the distinguished radius $\tau$ at any two points in $U$ are comparable -- together imply the boundedness of the sequence $\tau(\tilde{\zeta}'^j, \epsilon(\tilde{\zeta}^j))/ \tau(w'^j, \epsilon(p^j))$, by $C_6$ say. This when combined with (2.4) and (2.6), yields the stability of our discs $f_j$ on compact subdiscs in $\Delta$ i.e., we have for a positive constant $C_K>C_6L^2$ (which may depend on $K$ but not on $j$), that
\begin{align*}
f_j(K)= (S_j \circ\tilde{f_j})(K)\subseteq \Delta_{\sqrt{C_K}} \times \ldots \times \Delta_{\sqrt{C_K}} \times \Delta_{C_K}
\end{align*}
Hence by Montel's theorem, $f_j$'s converge to a map $ f_{\infty}: \Delta_{r_0} \rightarrow \overline{D'}_{\infty}$, where $f_{\infty}(0)= 0 \in \partial D'_{\infty}$. If $\phi$ is a local plurisubharmonic peak function at the origin, then the maximum principle applied to the subharmonic function $\psi = \phi \circ f_\infty$ implies that $f_{\infty} \equiv$ constant, since $\phi$ peaks precisely at one point. This contradicts our assumption that $ f_j(\Delta_\delta) \nsubseteq U $ (as soon as $\delta$ is taken smaller than $r_0$) and completes the proof of the lemma.
\end{proof}
\begin{rem}
The same line of argument  gives proposition 2.1 by replacing the discs by balls in $\mathbb{C}^m$, covering any given compact $K \subset \omega$ by balls and using the engulfing property of the special polydics $Q$.
\end{rem}
\noindent The second and more technical step, is a quantitative form of the Schwarz lemma at the boundary which generalizes lemma 3.6 of \cite{TT} mentioned earlier. To do this we require a stable version of the engulfing property for the distinguished polydiscs $Q^\nu$ assosciated to the scaled domain $D'_\nu$. Fix a pair of neighbourhoods $0 \in W \Subset V \Subset U$.
\begin{lem}
There exist constants $\alpha,A \in [0,1]$ and $K>1$ such that for every analytic disc
\[
f: \Delta_N \to V\cap D'_\nu
\]
that satisfies $M_{D'^\nu} \big( f(t), f'(t) \big) <A$, $f(0) \in W$ and $K^{N-1} \epsilon(f(0)) < \alpha$ also satisfies,
\[
\overline{f(\Delta_N)} \subset Q^\nu \Big( f(0), K^N \epsilon\big( f(0) \big) \Big).
\]
\end{lem}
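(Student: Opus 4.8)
The plan is to transcribe the proof of lemma 3.6 of \cite{TT} (recalled above as lemma 2.5), carrying out each estimate for the scaled domain $D'_\nu$ in place of the fixed domain and checking that every constant produced can be taken independent of $\nu$. This uniformity is structural: each object entering that argument -- the coefficients $a^\nu_{jk}(\zeta), b^\nu_{jk}(\zeta)$ of the normalizing automorphism ${}^\nu\Phi_\zeta$, the polyradii $\tau^\nu_k(\zeta,\delta)$, the determinant bounds (2.4), and hence the geometry of the distorted polydiscs $Q^\nu(\zeta,\delta) = ({}^\nu\Phi_\zeta)^{-1}(R(\zeta,\delta))$ -- depends only on finitely many derivatives of $r_\nu$ on the fixed neighbourhood $U$, and the $r_\nu$ converge in the $C^\infty$ topology on $U$ to $r_\infty$; moreover, since $D'_\nu \to D'_\infty$ and $\pa D'_\infty$ has type exactly $2m'$, the top coefficient $A^\nu_{2m'}(\zeta) = \max\{|a^\nu_{jk}(\zeta)| : j+k = 2m'\}$ stays bounded below by a positive constant for $\zeta$ near the origin and all large $\nu$. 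Thus the estimates of \cite{Ch3} and \cite{TT} controlling the $Q^\nu$ hold with $\nu$-independent constants.

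First I would record the resulting \emph{stable engulfing property}: there is $C \ge 1$, independent of $\nu$, so that whenever $\eta \in Q^\nu(\zeta,\delta)$ with $\zeta,\eta \in U$ one has $Q^\nu(\eta,\delta) \subset Q^\nu(\zeta, C\delta)$ and $Q^\nu(\zeta,\delta) \subset Q^\nu(\eta, C\delta)$, together with the companion facts that $\epsilon(\eta) \le C\delta$ and that $\tau^\nu(\zeta,\cdot)$ changes only by a controlled factor when its second argument is multiplied by a bounded constant (corollary 2.8 of \cite{Ch3}). These are exactly the statements already used in section 2.1 for a single domain, now with uniform constants by the previous paragraph; an equivalent reformulation, which is the form I would actually use, is that the $M$-metrics $M_{D'_\nu}$ evaluated at two points of $U$ lying in a common $Q^\nu(\cdot,\delta)$ are comparable with a $\nu$-independent constant.

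I would then prove lemma 2.6 along the lines of \cite{TT}. A disc $f : \Delta_N \to V \cap D'_\nu$ with $M_{D'_\nu}(f(t),f'(t)) < A$ has, by integrating the $M$-metric along radii -- legitimate because $M_{D'_\nu}(\zeta,\cdot)$ is absolutely homogeneous, hence rotation invariant, in the second slot, and because the $M$-length is controlled by the Poincar\'e length -- its entire image within $M_{D'_\nu}$-distance at most $cAN$ of $f(0)$, for an absolute constant $c$. It then remains to establish the uniform inclusion of $M_{D'_\nu}$-balls in distorted polydiscs: for $\zeta \in W$ and $r>0$ with $K^{r}\epsilon(\zeta)<\alpha$, the $M_{D'_\nu}$-ball of radius $r$ about $\zeta$ lies in $Q^\nu(\zeta, K^{r}\epsilon(\zeta))$. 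This is obtained by cutting the radius into unit segments: one unit of $M_{D'_\nu}$-distance carries a point from $\zeta$ into $Q^\nu(\zeta, K\epsilon(\zeta))$ -- here $A$ is taken small enough that the uniform localization lemma 2.4 keeps the relevant discs inside $U$, where all the comparisons are valid, which is also what the base case $N=1$ amounts to -- and $N$ iterations, each re-centering by the stable engulfing together with $\epsilon(\eta) \lesssim \delta$ for $\eta \in Q^\nu(\cdot,\delta)$, multiply the polydisc parameter by $K$ per step, producing $K^N$ after $N$ steps; fixing $K$ at the outset larger than the engulfing constant $C$ makes the iteration close. Applying this with $r = cAN$, renaming the resulting exponential base as the $K$ of the statement, and choosing $\alpha$ small enough that $K^{N-1}\epsilon(f(0)) < \alpha$ keeps every intermediate polydisc inside $U$, yields $\overline{f(\Delta_N)} \subset Q^\nu(f(0), K^N\epsilon(f(0)))$; the constants $\alpha, A$ are pinned down last, small relative to the already-fixed $K$.

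The main obstacle is the uniformity bookkeeping concentrated in the stable engulfing of the second paragraph -- specifically, ensuring that the non-degeneracy which prevents the distinguished radius $\tau^\nu$ from collapsing (the uniform positivity of $A^\nu_{2m'}$ near $0$) persists as $\nu \to \infty$, and that the finitely many derivative bounds invoked in \cite{Ch3} and \cite{TT} to establish engulfing for a fixed domain hold uniformly across the family. Granting the $C^\infty$ convergence $r_\nu \to r_\infty$ on $U$ this is routine, but it is the only place where the argument genuinely departs from the single-domain case of \cite{TT}; once it is in hand, the rest -- the integration of the $M$-metric, the iteration via engulfing, and the use of lemma 2.4 -- is a transcription of the corresponding steps there, with every constant now carrying no dependence on $\nu$.
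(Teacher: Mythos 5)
Your proposal follows essentially the same route as the paper: establish the uniform comparability of the distinguished radii $\tau^\nu$ and the stable engulfing property of the polydiscs $Q^\nu$ (the paper's (2.9) and lemma 2.8), note that the coefficients of the normalizing maps ${}^\nu\Phi_\zeta$ and the lower bound on $A^\nu_{2m'}$ are stable because the defining functions $r_\nu$ converge in the $C^\infty$ topology on $U$, and then observe that the proof of lemma 3.6 of \cite{TT} goes through verbatim with $\nu$-independent constants. The only difference is cosmetic: you reconstruct the internal induction of that lemma (integration of the $M$-metric plus iterated engulfing), whereas the paper simply cites \cite{TT} for it after securing the stable inputs.
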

\noindent Recall the definition of $A_l(z)$ and $\tau(z,\delta)$ from section 2.1. Let $A^\nu _l (z)$ and $\tau^\nu(z,\delta)$ denote the corresponding quantities for the domain $D'_\nu$ and $A^\infty_l(z)$ and $\tau^\infty(z, \delta)$ that for $D'_\infty$. Then it is clear that $A_l ^\nu(z) \to A_l^\infty(z)$ and consequently $\tau^\nu(z,\delta) \to \tau^\infty(z,\delta)$, both convergences being uniform for $z$ in $V$. Let $\delta>0$ be given. Let 
\[
M=\sup \big\{ \big( \tau^\nu(z,\delta) \big)^l : z \in V, \; 2\leq l \leq 2m',\; \nu \geq 1 \big\}
\]
and $0< \epsilon < \delta/M$. Then there exists by the uniform convergence of the $A$'s, an $N_\delta$ independent of $z \in V$ such that for all $2\leq l \leq 2m$ and all $\nu > N_\delta$,
\begin{equation}
A_l^\infty(z) - \epsilon < A^\nu_l(z) <A_l^\infty(z) + \epsilon
\end{equation}
The right inequality gives for all $2 \leq l \leq 2m$ that
\[
\big( \tau^\infty (z,\delta)\big)^l A_l^\nu(z) < \big( \tau^\infty(z, \delta) \big)^l A_l^\infty(z) + \big( \tau^\infty (z, \delta) \big)^l \epsilon  < \delta + \delta 
\]
by the definition of $\tau^\infty$ and $\epsilon$ . The definition of $\tau^\nu$ now makes this read as
\begin{equation}
\tau^\infty(z,\delta) < 2^{1/2} \; \tau^\nu(z, \delta) 
\end{equation}
for all $\nu > N_\delta$ and $z\in V$. Meanwhile the left inequality at (2.7) similarly gives,
\begin{align*}
\big( \tau^\nu(z,\delta)\big)^l A^\infty_l(z) &< \big(\tau^\nu(z,\delta)\big)^l A^\nu(z) + \big( \tau^\nu(z, \delta) \big)^l \epsilon < 2\delta 
\end{align*}
which implies $\tau^\nu(z,\delta) < 2^{1/2} \tau^\infty(z, \delta)$. Combining this with corollary 2.8 of \cite{Ch3} applied to $\tau^\infty$, we get for $z\in Q^\infty( z',\delta)$ and all $\nu$ large that 
\begin{equation}
1/C \; \tau^\nu(z',\delta)  <  \tau^\nu(z, \delta) <  C \tau^\nu(z', \delta)
\end{equation}
for some $C>0$, independent of $z'$ and $\nu$. This will lead to the following uniform engulfing property of these polydiscs.
\begin{lem}
There exists a positive constant $C$ such that for all $\nu$ large, $z'\in U$  and $z'' \in Q^\nu(z', \delta)$ we have
\begin{align*}
Q^\nu( z'', \delta) &\subset Q^\nu(z', C\delta) \\
Q^\nu(z' , \delta) &\subset Q^\nu(z'', C\delta)
\end{align*}
\end{lem}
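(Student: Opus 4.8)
The plan is to rerun the proof of the single-domain engulfing property from \cite{TT}, keeping explicit track of how every constant depends on $\nu$, and to observe that each constant that arises is controlled by finitely many $C^\infty$-bounds for the defining functions $r_\nu$ on the fixed neighbourhood $U$; since $r_\nu$ converges in $C^\infty(U)$ these bounds are uniform in $\nu$, and together with the uniform Jacobian estimate (2.4) this is exactly what is needed. The first step is to recast the statement analytically: writing $\Delta^n$ for the unit polydisc and recalling $Q^\nu(\zeta,\delta) = {}^\nu\Phi_\zeta^{-1}\big(R^\nu(\zeta,\delta)\big)$ with $R^\nu(\zeta,\delta) = \big({}^\nu B_\zeta^\delta\big)^{-1}(\Delta^n)$, the inclusion $Q^\nu(z'',\delta) \subset Q^\nu(z',C\delta)$ is equivalent to $T^\nu(\Delta^n) \subset \Delta^n$, where
\[
T^\nu := {}^\nu B_{z'}^{C\delta} \circ {}^\nu\Phi_{z'} \circ {}^\nu\Phi_{z''}^{-1} \circ \big({}^\nu B_{z''}^\delta\big)^{-1}
\]
is the normalized transition map, a polynomial automorphism of $\mbb C^n$; the reverse inclusion $Q^\nu(z',\delta) \subset Q^\nu(z'',C\delta)$ is similarly equivalent to $(T^\nu)^{-1}$ carrying $\Delta^n$ into itself. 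So it suffices to bound both $T^\nu$ and $(T^\nu)^{-1}$ on the unit polydisc, uniformly in $\nu$, $z'$ and small $\delta$.

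Two ingredients feed into such a bound. The first is comparability of the polyradii at the two base points, $\tau_k^\nu(z'',\delta) \approx \tau_k^\nu(z',\delta)$ for $1 \le k \le n$ whenever $z'' \in Q^\nu(z',\delta)$, with constants independent of $\nu$ -- which also shows that $z'$ is in turn anisotropically $\delta$-close to $z''$, so the two base points play symmetric roles. This is obtained exactly as in the lemma preceding the present one: the inequalities (2.7)--(2.9), which come from the uniform convergence $A_l^\nu \to A_l^\infty$ on $V$ together with corollary 2.8 of \cite{Ch3}, give $1/C\,\tau^\nu(z',\delta) < \tau^\nu(z,\delta) < C\,\tau^\nu(z',\delta)$ for $z$ anisotropically $\delta$-close to $z'$, with $C$ independent of $\nu$, and after first comparing $Q^\nu(z',\delta)$ with $Q^\infty(z',\delta)$ through $\tau^\nu \approx \tau^\infty$ the hypothesis $z'' \in Q^\nu(z',\delta)$ places $z''$ in this regime. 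The second ingredient is a structural description of ${}^\nu\Phi_{z'} \circ {}^\nu\Phi_{z''}^{-1}$: since each ${}^\nu\Phi_\zeta$ is the composite $\phi_5 \circ \cdots \circ \phi_1$ of section 2.1 whose coefficients are universal polynomials in finitely many derivatives of $r_\nu$ at $\zeta$, Taylor-expanding those coefficients between the anisotropically $\delta$-close points $z''$ and $z'$ shows that ${}^\nu\Phi_{z'} \circ {}^\nu\Phi_{z''}^{-1}$ is a translation composed with a normal-form-preserving polynomial automorphism whose constant part lies in $R^\nu(z',\delta)$ and whose higher coefficients have exactly the orders -- in $\delta$, in $\tau_1^\nu(z',\delta)$, and in the ratios $\tau_k^\nu(z'',\delta)/\tau_k^\nu(z',\delta)$ -- that are absorbed by conjugating with the dilations ${}^\nu B_{z'}^{C\delta}$ and $\big({}^\nu B_{z''}^\delta\big)^{-1}$; this is the same bookkeeping as in \cite{TT}.

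Combining the two ingredients with (2.4) then shows that the coefficients of both $T^\nu$ and $(T^\nu)^{-1}$ are at most a constant $C'$ independent of $\nu$, so both maps carry $\Delta^n$ into a fixed polydisc $\Delta^n_{C'}$; enlarging $C$, with the monotonicity inequality $\tau^\nu(z',C\delta) \gtrsim C^{1/2m'}\,\tau^\nu(z',\delta)$ used to absorb the factor $C'$ into a change of scale, then forces $T^\nu(\Delta^n) \subset \Delta^n$ and $(T^\nu)^{-1}(\Delta^n) \subset \Delta^n$, which are exactly the two asserted inclusions. I expect the main obstacle to be the second ingredient -- redoing the coefficient bookkeeping of \cite{TT} for the normalizing and transition maps with an explicit eye on the $\nu$-dependence. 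Conceptually this is not deep, since the reduction algorithm of section 2.1 exhibits each coefficient as a universal polynomial in derivatives of $r_\nu$ and these converge together with all derivatives on $U$; the subtlety is that the estimates must remain uniform as $\delta \to 0$, so one cannot merely invoke the compact-set convergence ${}^\nu\Phi_\zeta \to {}^\infty\Phi_\zeta$ but must carry the anisotropic scale $\delta$ through the polyradii $\tau_k^\nu$ at every step, as in the fixed-domain argument, and note that no constant produced depends on $\nu$.
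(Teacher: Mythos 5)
Your argument is essentially the paper's: the paper likewise reduces the first inclusion to showing that the transition map ${}^\nu B_{z'}^{C\delta}\circ{}^\nu\Phi_{z'}\circ({}^\nu\Phi_{z''})^{-1}\circ({}^\nu B_{z''}^{\delta})^{-1}$ of (2.10) carries the unit polydisc into itself, controls it by the uniform Jacobian bound (2.4) together with the comparability (2.9) (giving $\tau^\nu(z'',\delta)<C_7\,\tau^\nu(z',\delta)$) and the monotonicity $\tau^\nu(z',C\delta)>C^{1/2m}\,\tau^\nu(z',\delta)$, and then fixes $C$ with $C^{1/2}>L^2$ and $C^{1/2m}>L^2C_7$. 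Your more explicit coefficient bookkeeping is just a spelled-out version of the paper's appeal to (2.4) and to the $C^\infty$-convergence of the $r_\nu$, so no new idea is involved there.

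One correction is needed: the second inclusion $Q^\nu(z',\delta)\subset Q^\nu(z'',C\delta)$ is \emph{not} equivalent to $(T^\nu)^{-1}(\Delta^n)\subset\Delta^n$. Unwinding that condition gives $Q^\nu(z',C\delta)\subset Q^\nu(z'',\delta)$, which is false in general and becomes worse as $C$ grows, because for $(T^\nu)^{-1}$ the $C\delta$-dilation sits on the input side and enlarging $C$ enlarges the set being mapped, so the absorption-by-large-$C$ mechanism is unavailable. The second inclusion should instead be reduced to the map obtained from (2.10) by exchanging $z'$ and $z''$, namely ${}^\nu B_{z''}^{C\delta}\circ{}^\nu\Phi_{z''}\circ({}^\nu\Phi_{z'})^{-1}\circ({}^\nu B_{z'}^{\delta})^{-1}$; your estimates then apply verbatim, since (2.9) is symmetric in the two base points — the symmetry you already invoke — and this is exactly why the paper proves only the first statement and treats the other as identical.
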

\begin{proof} 
Let us recall what $Q^\nu$ is and rewrite  for instance the first statement as
\[
\big( {}^\nu B_{z''}^{\delta} \circ {}^\nu \Phi_{z''} \big)^{-1} \big( \Delta^n \big) \subset \big( {}^\nu B_{z'}^{C \delta} \circ {}^\nu \Phi_{z'} \big)^{-1} \big( \Delta^n \big)
\]
which is equivalent to saying that the map
\begin{equation}
{}^\nu B_{z'}^{C \delta} \circ {}^\nu \Phi_{z'} \circ ({}^\nu\Phi_{z''})^{-1} \circ ({}^\nu B^{\delta}_{z''})^{-1}
\end{equation}
maps the unit polydisc into itself. Now recall from (2.4), that the sequence of Jacobians of ${}^\nu\Phi_{z'} \circ {}^\nu\Phi_{z''}^{-1}$ is uniformly bounded above by $L^2$. Unravelling the definition of the scalings ${}^\nu B_{z'}^{C \delta}$ and ${}^\nu B_{z''}^\delta$, then shows that the map defined in (2.10) carries the unit polydisc into the polydisc of polyradius given by
\begin{equation}
\big( L^2 \tau^\nu(z'', \delta)/ \tau^\nu(z', C \delta), L^2 \delta^{1/2}/(C\delta)^{1/2}, \ldots, L^2 \delta^{1/2}/(C\delta)^{1/2}, L^2 \delta/ C\delta \big)
\end{equation}
Now the uniform comparability of the $\tau$'s obtained in (2.9) gives $C_7>0$ independent of $\nu$ and $z'$, such that 
\[
\tau^\nu(z'', \delta) < C_7 \tau^\nu(z', \delta)
\]
whereas looking at the definition of the $\tau$ directly gives 
\[
\tau^\nu (z', C\delta) > C^{1/2m} \tau^\nu (z', \delta)
\]
Combining these two we see that the first component at (2.11) is bounded above by $L^2 C_7/ C^{1/2m}$. Therefore, choosing $C$ to be such that $C^{1/2} > L^2$ and $C^{1/2m} > L^{2}C_7$ ensures that the map at (2.10) leaves the unit polydisc invariant, completing the verification of the lemma. 
\end{proof}

Before passing, let us record one more stable estimate concerning the $\tau$'s to be of use later, namely 
\begin{equation}
\tau^\nu(z, \delta) \leq C_8 \delta^{1/2m'}
\end{equation}
for $C_8>0$ (independent of $\nu$), which follows as in section 2.1 from the fact that the $A^\nu _{2m'}(z)$'s are uniformly 
bounded below by a positive constant on a neighbourhood of the origin (which as usual we may assume to be $U$).

\medskip

Next, observing that all the constants in the calculations in the proof of lemma 3.6 of \cite{TT}  are stable
owing to (2.9), lemma 2.8 and the stability of the estimates on the various coefficients of ${}^\nu\Phi$'s given in lemma 3.4 of \cite{TT} -- because all these coefficients are nothing but the derivatives of some order (less than $2m'$) of the defining functions of the scaled domains, which converge in the $C^\infty$-topology on $U$-- lemma 2.7 follows as well. 

\begin{proof}[Proof of theorem 2.3]
If the theorem were false, there would exist a subsequence $\nu_j$ of $\mathbb{N}$ and $\zeta^j \in D'^{\nu_j}$ converging to the origin such that 
\[
K_{D'_{\nu_j}} \Big( \zeta^j, X \big/ M_{D'_{\nu_j}}( \zeta^j, X) \Big) < 1/j^2
\]
which after re-indexing the $D'^{\nu_j}$'s as $D'^j$ and using lemma 2.4 reads as : \medskip\\
There exists a sequence $\zeta^j \in  U\cap D'^j$ with $\zeta^j$ converging to the origin and a sequence of analytic discs 
\[ 
f_j: \Delta \to U \cap D'_j
\]
such that 
\[
f_j(0) = \zeta^j \; \; \text{and} \; \; f'_j(0) = R_j \Big( X \big / M_{D'_j}(\zeta^j,X) \Big)
\]
with $R_j \geq j^2$. \medskip\\
The idea as in \cite{Ber2} and \cite{TT} now consists of scaling these analytic discs appropriately to enlarge their domains to discs of growing radius while ensuring their normality also, so that taking their limit produces an entire curve lying inside $D'_\infty$ to contradict its Brody hyperbolicity. To work this out, define $M_j(t) : \overline{\Delta}_{1/2} \to \mathbb{R}^{+}$ by
\[
M_j(t)=M_{D'_j}\big( f_j(t), f'_j(t) \big)
\]
Then 
\begin{align*}
M_j(0)=M_{D'_j} \big(f_j(0), f'_j(0)\big) &=M_{D'_j} \Big( \zeta^j , R_j X \big / M_{D'_j}(\zeta^j, X) \Big)\\
&= R_j \geq j^2
\end{align*}
Recall the following lemma from \cite{Ber2}. 
\begin{lem}
Let $(X,d)$ be a complete metric space and let $M:X \to \mathbb{R}^{+}$ be a locally bounded function. Then for all $\sigma>0$ and for all $u \in X$ satisfying $M(u) >0$, there exists $v \in X$ such that
\begin{enumerate}
\item[(i)] $d(u,v) \leq 2/ \sigma M(u)$
\item[(ii)] $M(v) \geq M(u)$
\item[(iii)] $M(x) \leq 2M(v)$ if $d(x,v) \leq 1/ \sigma M(v)$
\end{enumerate}
\end{lem}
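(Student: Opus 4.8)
The plan is to run a standard ``maximal-point'' iteration in the spirit of Brody's reparametrisation lemma: a failure of property (iii) is used to manufacture a point where $M$ has (strictly) grown by a definite factor but has moved only a controlled distance; iterating, either the process stops and delivers the desired $v$, or it produces a Cauchy sequence along which $M$ blows up, contradicting local boundedness. Concretely, set $u_0 = u$ and build a (possibly finite) sequence $u_0, u_1, u_2, \ldots$ as follows. Given $u_k$ (with $M(u_k)>0$, which will be maintained automatically), if $u_k$ already satisfies the conclusion (iii) with $v$ replaced by $u_k$ -- i.e.\ $M(x) \le 2M(u_k)$ whenever $d(x,u_k) \le 1/(\sigma M(u_k))$ -- then halt and put $v = u_k$. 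Otherwise negating (iii) furnishes a point $u_{k+1}$ with
\[
d(u_{k+1},u_k) \le \frac{1}{\sigma M(u_k)} \quad\text{and}\quad M(u_{k+1}) > 2 M(u_k),
\]
and the construction continues; in particular $M(u_{k+1})>0$, so the recursion is legitimate.

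Next I would rule out that the iteration runs forever. If it did, then $M(u_k) > 2^k M(u_0) = 2^k M(u)$ for every $k$, so $M(u_k) \to \infty$. On the other hand, for $k<m$,
\[
d(u_k,u_m) \le \sum_{j=k}^{m-1} d(u_j,u_{j+1}) \le \sum_{j=k}^{m-1}\frac{1}{\sigma M(u_j)} < \frac{1}{\sigma M(u)}\sum_{j=k}^{\infty} 2^{-j} \longrightarrow 0
\]
as $k\to\infty$, so $\{u_k\}$ is Cauchy. By completeness it converges to some $u_\infty\in X$; but then every neighbourhood of $u_\infty$ contains infinitely many of the $u_k$, on which $M$ is unbounded, contradicting the local boundedness of $M$. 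Hence the process terminates at some index $K\ge 0$, and we take $v = u_K$.

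It then remains only to verify the three conclusions. Property (iii) holds for $v=u_K$ by the termination criterion. For (ii), $M(v) = M(u_K) \ge 2^K M(u) \ge M(u)$ (with equality when $K=0$). For (i), using $M(u_j) \ge 2^j M(u)$,
\[
d(u,v) = d(u_0,u_K) \le \sum_{j=0}^{K-1}\frac{1}{\sigma M(u_j)} \le \frac{1}{\sigma M(u)}\sum_{j=0}^{K-1} 2^{-j} < \frac{2}{\sigma M(u)},
\]
which is exactly (i). The only point requiring any thought is the role of the factor $2$ in the blow-up step: it is precisely what makes the displacements form a convergent geometric series (producing the constant $2$ in (i)) while still forcing the blow-up of $M$ and hence the contradiction with local boundedness; any fixed constant $>1$ would serve, at the cost of a different constant in (i). Beyond this there is no genuine obstacle -- the argument is purely metric, using completeness and local boundedness and nothing about the ambient complex geometry.
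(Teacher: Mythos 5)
Your proof is correct: the halting/iteration scheme, the geometric decay of the displacements giving the Cauchy property and the constant $2$ in (i), and the contradiction with local boundedness at the limit point are all sound. The paper itself offers no proof of this lemma (it is quoted from the reference [Ber2] of Berteloot), and your argument is precisely the standard iteration proof given there, so there is nothing further to reconcile.
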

\noindent Apply this lemma to $M_j(t)$ on $\overline{\Delta} _{1/2}$ with $u=0$ and $\sigma=1/j$, to get $a_j \in \overline{\Delta}_{1/2}$ such that $\vert a_j \vert \leq 2 j /M_j(0)$ and 
\begin{equation}
M_j(a_j) \geq M_j(0) \geq j^2
\end{equation}
and furthermore,
\begin{equation}
M_j(t) \leq 2M_j(a_j) \; \; \text{on} \;\; \Delta \big( a_j, j/M_j(a_j) \big)
\end{equation}
(for all $j$ big enough so that the discs above lie inside $\Delta_{1/2}$). Now, scaling the unit disc with respect to the points $a_j$ which approach the origin (since the contention is that the derivatives of the $f_j$'s near the origin, in the $M$-metric, are blowing up rapidly), we get the discs $\Delta_j = s_j(\Delta)$ where $s_j= b^j \circ \phi_{a_j}$ with $\phi_{a_j}$ the translation that transfers $a_j$ to the origin and $b^j$ being the map that dilates by a factor of $2A^{-1}M_j(a_j)$ with $A$ the constant of lemma 2.7. The scaled analytic discs $g_j : \Delta_j \to U \cap D'_j$ are then given by
\begin{equation*}
g_j(t) = (f_j\circ s_j^{-1})(t)= f_j(a_j + c_jt),
\end{equation*}
where $c_j = A/2M_j(a_j)$. Note that
\[
g'_j(t) = c_j f'_j(a_j + c_j t)
\]
which implies
\begin{align*}
M_{D'^j} \big( g_j(t), g'_j(t) \big) &= c_j M_j\Big( f_j(a_j+ c_jt), f'_j(a_j + c_j t) \Big) \\
&=c_j M_j(a_j + c_j t )
\end{align*}
Now note that $ a_j + c_j t$ lies in $\Delta\big( a_j, j/M_j(a_j) \big)$ for all $\vert t \vert <j$ as $0<A<1$
and therefore by (2.13), (2.14) and the definition of $c_j$, we have for $t \in \Delta_j$ that
\begin{align}
M_{D'^j} \big( g_j(t), g'_j(t) \big) &= c_j M_j(a_j+c_jt) \nonumber\\
&< c_j\big(2 M_j(a_j) \big) \nonumber\\
&= A
\end{align}
We wish to apply lemma 2.7 to these $g_j$'s. First we must that verify their centres lie close enough to the origin; to that end write
\[
\vert g_j(0) \vert \leq \vert f_j(a_j) - f_j(0) \vert + \vert f_j(0)\vert 
\]
and note that the first term on the right can be made arbitrarily small by the equicontinuity of the $f_j$'s which map into the bounded neighbourhood $U$. After passing to a subsequence if necessary to ensure that $\epsilon(g_j(0)) \leq \alpha/ K^{j-1}$, we have with (2.15), verified all the criteria of lemma 2.7 and therefore
\begin{equation}
g_j(\Delta_N) \subset Q^j \Big( g_j(0), K^N \epsilon( g_j(0) ) \Big)
\end{equation}
for all $j \geq N$. Let $\eta_j=g_j(0)$ and $\eta'_j = \eta_j + (0, \ldots,0, \epsilon_j)$ with $\epsilon_j >0 $ such that $r_j(\eta'_j)=0$ and note that 
\begin{equation*}
\eta_j \in Q^j \big( \eta'_j, C\epsilon_j \big)
\end{equation*}
for a uniform constant $C \geq 1$ since $\epsilon_j \approx r_j(\eta_j)$, uniformly in $j$. Consequently using lemma 2.8, (2.16) becomes
\begin{equation}
g_j(\Delta_N) \subset Q^j \Big( \eta'_j, CK^N\epsilon _j \Big)
\end{equation}
for all $j \geq N$. Now if we let 
\[
h_j = {}^j B_{\eta_j} \circ {}^j \Phi_{\eta'_j} \circ g_j : \Delta_j \to \tilde{S}_j(U \cap D')
\]
where $\tilde{S}_j$ is the map
\[
{}^j B_{\eta_j} \circ {}^j \Phi_{\eta'_j} \circ B_{p'^j} \circ \Phi_{w'^j}
\]
then what (2.17) translates to, for the map $h_j$ is that 
\[
h_j (\Delta_N) \subset \Delta_{(CK^N)^{1/2}} \times \ldots \times \Delta_{(CK^N)^{1/2}} \times \Delta_{CK^N}
\]
for all $N \leq j$, exactly as in the proof of lemma 2.4. Also, note that the domains $\tilde{S}_j(U \cap D')$ after passing to a subsequence if necessary, converge to a domain $\tilde{D}'_\infty$ of the same form as $D'_\infty$, by the same argument as in section 2.1 together with (2.12). Thus Montel's theorem, a diagonal sequence argument and an application of the maximum principle to the limit, gives rise to an entire curve
\[
h: \mathbb{C} \to \tilde{D}'_\infty. 
\]
Indeed, note that 
\[
h_j(0) = \big( {}^j B_{\eta_j} \circ {}^j \Phi_{\eta'_j} \big)(\eta_j) =(0, \ldots,0,-1)
\]
for all $j$ and so $h(0)=(0, \ldots, 0, -1)$, which lies in $\tilde{D}'_\infty$ all of whose boundary points -- including the point at infinity -- are peak points. Now, to check that $h$ is nonconstant, we examine the sequence of their derivatives at the origin. We write $X_j$ for $g'_j(0) = c_jf'_j(a_j)$ (which tend to $0$ but are bounded below in the $M$-metric).
\begin{align}
\big\vert h'_j(0) \big\vert_{l^1} &= \big\vert \big( {}^j B_{\eta'_j} \circ D ({}^j \Phi_{\eta'_j}) \big) (\eta_j) (X_j) \big\vert_{l^1}\\
&\geq    \big\vert \big(  {}^j B_{\eta_j} \circ D ({}^j\Phi_{\eta_j})  \big)(\eta_j) (X_j) \big \vert_{l^1} \nonumber\\
&= M_{D'^j} \big( g_j(0), X_j \big) \nonumber\\
&=c_jM_j(a_j) =A/2 \nonumber
\end{align}
where the lower bound here is a consequence of (2.9) applied to $\tau^j$ for the points $\eta_j$ and $\eta'_j$ and the fact that the family
\[
D({}^\nu \Phi_{z'}) \circ D( {}^\mu \Phi_{z''}^{-1})
\]
is uniformly bounded below in norm. Passing to the limit in (2.18), now gives
\[
\vert h'(0) \vert _{l^1} \geq C(A/2)  >0
\]
and we reach the contradiction mentioned earlier namely to the Brody hyperbolicity of $\partial \tilde{D}'_\infty$ (see for instance, lemma 3.8 of \cite{TT}). 
\end{proof}
\noindent Finally, let us note the consequence of theorem (2.3) in the form that we shall make use of
\begin{cor}
There is neighbourhood $U$ of the origin and a positive constant $C$ such that
\[
K_{D'^\nu}(z,X) \geq C \; \vert X \vert / \big( {\rm dist}(z, \partial D'^\nu) \big)^{1/2m'} 
\]
for all $z \in U \cap D'_\nu$ and all $\nu \gg 1$. 
\end{cor}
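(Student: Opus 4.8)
The plan is to read the corollary off from Theorem 2.3 by bounding the $M$-metric $M_{D'_\nu}$ below by $|X|/\big({\rm dist}(z,\partial D'_\nu)\big)^{1/2m'}$, keeping every constant independent of $\nu$. By Theorem 2.3 there is a neighbourhood $U$ of the origin and a constant $C>0$, independent of $\nu$, with $K_{D'_\nu}(z,X)\ge C\,M_{D'_\nu}(z,X)$ for all $z\in U\cap D'_\nu$ and $\nu\gg1$; so it suffices to produce $c>0$ independent of $\nu$ with $M_{D'_\nu}(z,X)\ge c\,|X|/\big({\rm dist}(z,\partial D'_\nu)\big)^{1/2m'}$, after shrinking $U$ if necessary.

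Recall that $M_{D'_\nu}(z,X)=\sum_{k=1}^{n}\big|\big(D({}^\nu\Phi_z)(z)X\big)_k\big|/\tau^\nu_k(z,\epsilon(z))$, where $\epsilon(z)>0$ is the number for which $z+(0,\ldots,0,\epsilon(z))\in\partial D'_\nu$, and $\tau^\nu_1(z,\epsilon(z))=\tau^\nu(z,\epsilon(z))$, $\tau^\nu_k=\epsilon(z)^{1/2}$ for $2\le k\le n-1$, $\tau^\nu_n=\epsilon(z)$. Three uniform facts are needed. First, $\epsilon(z)\approx{\rm dist}(z,\partial D'_\nu)$ with constants independent of $\nu$; this is immediate from $\partial r_\nu/\partial z_n$ being bounded away from zero uniformly in $\nu$ near the origin, and is the same comparison already used in the proof of Theorem 2.3. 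Second, by (2.4) the Jacobian determinant of ${}^\nu\Phi_z$ is pinched between $1/L$ and $L$ uniformly in $\nu$ and $z\in U$, while its entries — being derivatives of order less than $2m'$ of the $C^\infty$-convergent defining functions $r_\nu$ — are uniformly bounded; hence the operator norm of $\big(D({}^\nu\Phi_z)(z)\big)^{-1}$ is uniformly bounded, so $\sum_{k=1}^{n}\big|\big(D({}^\nu\Phi_z)(z)X\big)_k\big|\gtrsim|X|$ uniformly in $\nu$. Third, $\tau^\nu(z,\epsilon(z))\le C_8\,\epsilon(z)^{1/2m'}$ uniformly in $\nu$ by the stable estimate (2.12); and shrinking $U$ so that $\epsilon(z)<1$ on $U\cap D'_\nu$ for $\nu\gg1$, the inequality $2m'\ge2$ gives $\epsilon(z)^{1/2}\le\epsilon(z)^{1/2m'}$ and $\epsilon(z)\le\epsilon(z)^{1/2m'}$, so that $\max_{1\le k\le n}\tau^\nu_k(z,\epsilon(z))\le C_8\,\epsilon(z)^{1/2m'}$.

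Putting these together,
\[
M_{D'_\nu}(z,X)\ \ge\ \frac{1}{\max_{1\le k\le n}\tau^\nu_k(z,\epsilon(z))}\sum_{k=1}^{n}\big|\big(D({}^\nu\Phi_z)(z)X\big)_k\big|\ \gtrsim\ \frac{|X|}{\epsilon(z)^{1/2m'}}\ \approx\ \frac{|X|}{\big({\rm dist}(z,\partial D'_\nu)\big)^{1/2m'}},
\]
with all implied constants independent of $\nu$, and feeding this into the inequality of Theorem 2.3 yields the corollary. I do not expect a genuine obstacle here: the entire content is bookkeeping to ensure that the comparison $\epsilon(z)\approx{\rm dist}(z,\partial D'_\nu)$, the Jacobian bounds, and the constant $C_8$ are all independent of $\nu$ — which is precisely what the $C^\infty$-convergence of the $r_\nu$ and the stable estimates (2.4), (2.9) and (2.12) were arranged to guarantee. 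Thus the genuinely hard part, namely the stable lower bound for $K_{D'_\nu}$ itself, is already contained in Theorem 2.3.
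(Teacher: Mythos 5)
Your proposal is correct and follows essentially the paper's own route: the corollary is obtained by combining Theorem 2.3 with the lower bound $M_{D'_\nu}(z,X)\gtrsim \vert X\vert/(\epsilon^\nu(z))^{1/2m'}$ coming from the stable estimate (2.12), together with $\epsilon^\nu(z)\approx{\rm dist}(z,\partial D'_\nu)$ uniformly in $\nu$. Your explicit verification of the uniform bound on $D({}^\nu\Phi_z)(z)$ and its inverse (via (2.4) and the $C^\infty$-convergence of the $r_\nu$) is exactly the step the paper leaves implicit, so there is nothing to add.
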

\noindent This comes from the facts that $\epsilon_\nu (z) \approx \textrm{dist}(z, \partial D'^\nu)$ and for some $C_9>0$ we have for all $\nu$ that
\[
M_{D'^\nu}(z,X) \geq C_9 \vert X \vert_{l^1} / (\epsilon^\nu(z) )^{1/2m'}
\]
which in turn follows from (2.12).

\subsection{Uniform H\"older continuity of the scaled maps near the origin}
In this section, we recall from \cite{CS}, the arguments which show that the family of scaled maps is uniformly H\"older continuous upto the boundary -- we already know that each scaled map is H\"older continuous upto the origin.
\begin{thm}
There exist positive constants $r,C$ such that 
\[
\vert F^\nu(z') -F^\nu(z'') \vert \leq C\vert z' - z'' \vert^{1/2m'}
\]
for any $z', z'' \in \bar{D}_\nu \cap B(0,r)$ and all large $\nu$.
\end{thm}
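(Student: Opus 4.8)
The plan is to establish the uniform H\"older estimate by transferring the known interior-type decay of the Kobayashi metric on the scaled targets $D'_\nu$ (Corollary 2.10) back to the source through the scaled maps $F^\nu$, exactly along the lines of \cite{CS}. First I would fix a small ball $B(0,r)$ and observe that, by the scaling construction of Section 2.1, the domains $D_\nu$ near the origin are uniformly (in $\nu$) exhausted by, and contained in, model-type domains with defining functions $r_\nu \to r_\infty$ in $C^\infty$ on compacta; in particular there is a uniform Lipschitz defining function and a uniform lower bound for the Kobayashi metric of $D_\nu$ from inside, and also the opposite side is controlled because $\partial D_\nu$ is smooth pseudoconvex finite type with stable data. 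The key point is that $F^\nu : D_\nu \to D'_\nu$ is distance-to-the-boundary preserving up to uniform constants: from the relation $|r'_\nu \circ F^\nu(z)| \lesssim |r_\nu(z)|$ derived in Section 2.1, together with the Hopf-lemma lower bound coming from plurisubharmonicity of $r'_\nu$ on $D'_\nu$, one gets ${\rm dist}(F^\nu(z),\partial D'_\nu) \approx {\rm dist}(z,\partial D_\nu)$ with constants independent of $\nu$.

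Next I would run the standard Kobayashi-metric comparison argument for H\"older continuity. Since $K_{D_\nu} \le K$ decreases under $F^\nu$, we have $K_{D'_\nu}(F^\nu(z), dF^\nu(z)X) \le K_{D_\nu}(z,X)$. On the source side, near the origin $D_\nu$ is a smooth pseudoconvex finite type domain with multitype bounded by that of $\partial D_\infty$, so the Kobayashi metric grows at most like $|X|/{\rm dist}(z,\partial D_\nu)$ away from tangential directions and, crucially, integrating along curves gives the upper estimate $K_{D_\nu}(z,X) \lesssim |X|/{\rm dist}(z,\partial D_\nu)$ with uniform constants (this is the elementary upper bound for the Kobayashi metric of a domain admitting uniform interior balls, hence uniform in $\nu$). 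On the target side, Corollary 2.10 gives the stable lower bound $K_{D'_\nu}(w,Y) \ge C|Y|/({\rm dist}(w,\partial D'_\nu))^{1/2m'}$ for $w$ near the origin and all $\nu \gg 1$. Combining these with the distance-preserving property yields, for $z \in D_\nu$ near $0$,
\[
\frac{|dF^\nu(z)X|}{\big({\rm dist}(z,\partial D_\nu)\big)^{1/2m'}} \lesssim \frac{|X|}{{\rm dist}(z,\partial D_\nu)},
\]
i.e. $|dF^\nu(z)| \lesssim {\rm dist}(z,\partial D_\nu)^{1/2m' - 1}$ uniformly in $\nu$. Integrating this gradient bound along nearly-straight paths inside $D_\nu \cap B(0,r)$ (using Hardy--Littlewood type reasoning, since $1/2m' - 1 \in (-1,0)$) produces the H\"older estimate $|F^\nu(z') - F^\nu(z'')| \lesssim |z'-z''|^{1/2m'}$ for $z',z''$ in the interior, and then continuity up to $\partial D_\nu \cap B(0,r)$ extends it to the closure, with $r,C$ independent of $\nu$.

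I expect the main obstacle to be the uniformity, in $\nu$, of the upper bound for $K_{D_\nu}$ and of the boundary-distance comparison near a degenerate (non-strongly-pseudoconvex) boundary point of the source. The source domains $D_\nu$ are scalings of the $h$-extendible hypersurface $M$, so their boundaries near $0$ are only weakly pseudoconvex of finite type; one must check that the constants in the interior Kobayashi upper estimate and in the Hopf-lemma lower bound do not degenerate as $\nu \to \infty$. This is handled by noting that $r_\nu \to r_\infty$ together with all derivatives on compacta and that $D_\infty$ is of finite type (by $h$-extendibility), so all the relevant geometric quantities — multitype, the size of the largest inscribed polydisc, the plurisubharmonicity constants of $r'_\nu$ on $D'_\nu$ — are stable, precisely as the analogous target-side facts were made stable in Section 2.3. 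Once this stability is in hand, the rest is the routine integration argument from \cite{CS}, and the fact that each individual $F^\nu$ is already H\"older up to the origin (noted before the theorem) guarantees the estimate propagates cleanly to the boundary.
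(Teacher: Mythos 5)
Your overall strategy is the same as the paper's: decrease of the Kobayashi metric under $F^\nu$, the stable lower bound of Corollary 2.10 on the target, the trivial upper bound $K_{D_\nu}(z,X)\le \vert X\vert/{\rm dist}(z,\partial D_\nu)$ on the source, the boundary-distance estimate ${\rm dist}(F^\nu(z),\partial D'_\nu)\lesssim {\rm dist}(z,\partial D_\nu)$ coming from the Lipschitz hypothesis, and finally integration of the resulting bound $\vert dF^\nu(z)\vert \lesssim {\rm dist}(z,\partial D_\nu)^{-1+1/2m'}$ along suitable paths. However, there is a genuine gap: the lower bound of Corollary 2.10 is only valid for points of $D'_\nu$ in a fixed small neighbourhood of the origin (equivalently, near the limit point $q$ of $F^\nu(0)$); it certainly does not hold on all of $D'_\nu$, which becomes unbounded in the limit. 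So before you may write $\vert dF^\nu(z)\vert \lesssim {\rm dist}(z,\partial D_\nu)^{-1+1/2m'}$ for every $z\in D_\nu\cap B(0,r)$, you must first prove that $F^\nu$ maps $D_\nu\cap B(0,\delta)$ into a prescribed ball $B(q,2\alpha)$, uniformly in $\nu$. This uniform localization is exactly Lemma 2.12 of the paper and is the crux of the proof; it does not follow from normality of $\{F^\nu\}$ (which only gives locally uniform convergence on compact subsets of $D_\infty$, not up to the boundary) nor from the individual H\"older continuity of each $F^\nu$, since those constants are a priori $\nu$-dependent. The paper proves it by contradiction: given $a_\nu\to 0$ with $\vert F^\nu(a_\nu)-q\vert\ge\epsilon_0$, one joins $a_\nu$ to inner-normal points $b_\nu$ (where $\vert F^\nu(b_\nu)-q\vert\to 0$ by continuity of each $F^\nu$ up to the boundary and $F^\nu(0)\to q$) by paths $\gamma_\nu$ of length $\approx s_\nu=\vert a_\nu-b_\nu\vert$ with controlled distance to $\partial D_\nu$, chooses the last exit time $t_\nu$ of the image from $\partial B(q,2\eta)$ so that the gradient bound is legitimately applicable on $[t_\nu,3s_\nu]$, and integrates to get $\vert F^\nu(\gamma_\nu(t_\nu))-F^\nu(b_\nu)\vert\lesssim s_\nu^{1/2m'}\to 0$, contradicting the fact that this quantity stays of size $\eta$. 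Your proposal omits this step entirely, and without it the chain of estimates is applied outside the region where the target metric bound is known.

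A secondary remark: the uniformity issue you flag on the source side is not the real difficulty. The upper bound $K_{D_\nu}(z,X)\le \vert X\vert/{\rm dist}(z,\partial D_\nu)$ holds with constant $1$ for an arbitrary domain (restrict to the inscribed ball), so no finite-type or multitype stability of $\partial D_\nu$ is needed there; likewise only the one-sided estimate ${\rm dist}(F^\nu(z),\partial D'_\nu)\lesssim{\rm dist}(z,\partial D_\nu)$ (from the Lipschitz extension, scaled, using $\delta_\nu\approx\gamma_\nu$) enters, not the two-sided comparison. Once you add the uniform localization lemma, the rest of your argument — the gradient estimate and the path-integration yielding the exponent $1/2m'$, extended to $\bar D_\nu\cap B(0,r)$ by continuity of each $F^\nu$ up to the boundary — matches the paper's proof.
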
 
\noindent By the previous section we may assume that $\{F^\nu(0)\}$ converges to a (finite) boundary point $q \in \partial D'_{\infty}$. Recall that the origin (respectively the $\Re z_n$ axis) is a common boundary point (respectively, the common normal at the origin) for all the scaled domains. At this special boundary point, we first show that the family of scaled mappings is equicontinuous -- we already know that this holds on compact subsets of $D_{\infty}$-- or equivalently that, given any neighbourhood of $q$, there exists a small ball about the origin, such that every scaled map carries the piece of its domain intercepted by this ball into that given neighbourhood. Then, the uniform lower bound on the Kobayashi metric near $q$ will sharpen the uniform boundary distance decreasing property to uniform H\"older continuity, of the scaled maps. In particular then, the family ${\{ F^\nu \}} \cup {\{F\}}$ is equicontinuous near the origin, upto the respective boundary.
\begin{lem}
For any $\epsilon>0$ there exists $\delta>0$, such that $\vert F^\nu(z)-q\vert <\epsilon$ for any $z\in D_\nu \cap B(0, \delta)$ and all large $\nu$.
\end{lem}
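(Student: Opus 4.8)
\medskip
\no The plan is to transfer to the maps $F^\nu$ a localisation of the target near $q$ and then play it off against the boundary--distance estimate for $F^\nu$ and the stable blow--up of the Kobayashi metric. Since $\pa D_\infty'$ has the explicit form (1.2), its Levi rank equals $n-2$ at \emph{every} finite boundary point, so the scaled domains $D_\nu'$ are uniformly of the standard form near $q$ and their defining functions converge in $C^\infty$ there; consequently the localisation results of the previous section --- the uniform attraction of analytic discs (Lemma 2.4 and its refinement Lemma 2.7) and the stable lower bound for the Kobayashi metric (Theorem 2.3 and Corollary 2.10) --- hold near $q$ with constants independent of $\nu$, by the same proofs. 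On the source side, $D_\infty$ is $h$-extendible, hence of finite type near $0$, so $0 \in \pa D_\nu$ is a uniform local plurisubharmonic peak point (again by stability, since $D_\nu \to D_\infty$ with error controlled as in Section 2.1), and $\pa D_\nu$ carries a uniform system of distinguished polydiscs $Q(\cdot,\rho)$ around $0$ with the engulfing property. Finally, recall that $f$, being Lipschitz, gives ${\rm dist}(F^\nu(z),\pa D_\nu') \lesssim {\rm dist}(z,\pa D_\nu)$ uniformly in $\nu$ for $z \in D_\nu$ near $0$; in particular $F^\nu$ maps $D_\nu \cap B(0,\de)$ to within $O(\de)$ of $\pa D_\nu'$, and $F^\nu$ extends continuously to $\ov D_\nu$ near $0$ with $F^\nu(0)\to q$ (Proposition 2.2 and the reduction at the start of this subsection).

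\medskip
\no The core estimate is the following. Fix a ball $B(q,\eta_0)$ on which the near--$q$ machinery above applies with uniform constants, and take $\de$ small; assume for the moment that $F^\nu$ sends the points occurring below into $B(q,\eta_0)$. First control $F^\nu$ along the inner normal: for $0 < t \le \de$ the Euclidean segment from $('0,-t)$ to $('0,-t/2)$ is an analytic disc in $D_\nu$ of bounded Kobayashi length lying at boundary distance $\approx t$, so by the Kobayashi decreasing property and Corollary 2.10 its $F^\nu$-image has Euclidean length $\lesssim t^{1/2m'}$; summing the geometric series along $('0,-t),('0,-t/2),\dots$ down to $0$, whose limit is $F^\nu(0)$ by continuity, gives $|F^\nu('0,-t)-F^\nu(0)| \lesssim \de^{1/2m'}$. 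Next, for $z \in D_\nu \cap B(0,\de)$ join $z$ to the normal point $p^\nu_\ast = ('0,-{\rm dist}(z,\pa D_\nu))$ by an analytic disc $\la^\nu : \Delta \to D_\nu$ of Kobayashi length $O(1)$ lying at boundary distance $\lesssim \de$ inside a distinguished polydisc around $0$ --- this uses the $h$-extendible structure of $D_\nu$ near $0$ and the weighted size estimates $|z_j| \lesssim {\rm dist}(z,\pa D_\nu)^{1/m_{n+1-j}}$ --- and the same argument (Theorem 2.3 to pass to the $M$-metric, the Kobayashi decreasing property, Corollary 2.10) yields $|F^\nu(z)-F^\nu(p^\nu_\ast)| \lesssim \de^{1/2m'}$. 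Combining the two, $|F^\nu(z)-q| \lesssim \de^{1/2m'} + |F^\nu(0)-q|$, which is $< \ep$ once $\de$ is small and $\nu$ is large.

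\medskip
\no It remains to remove the provisional hypothesis that the images of $z$, of $p^\nu_\ast$, and of the intervening disc and segments already lie in $B(q,\eta_0)$; a priori each $F^\nu$ is continuous up to $0$, but with a modulus of continuity that could degenerate in $\nu$, so this is precisely where uniformity must be earned, and it is the step I expect to be the main obstacle. It is handled by a connectedness (open--closed) argument: for $\nu$ large the set of points in $D_\nu \cap Q(0,C\rho)$ whose $F^\nu$-image lies in $B(q,\eta_0)$ is open and non-empty (it contains a neighbourhood of $0$, since $F^\nu(0)\to q$ and $F^\nu$ is continuous near $0$), and on this set the core estimate forces the image to stay in $B(q,\eta_0/2)$, so it never reaches $\pa B(q,\eta_0)$ and the set is also relatively closed; carrying the argument out over the distinguished polydiscs $Q(0,\cdot)$ rather than Euclidean balls --- so that the connecting discs, which may expand in the degenerate directions, do not escape the region under consideration --- the set is therefore all of $D_\nu \cap Q(0,C\rho)$, which contains $D_\nu \cap B(0,\de)$ for $\de$ small. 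A secondary technical obstacle, which also needs care, is the uniform--in--$\nu$ construction of the connecting discs $\la^\nu$ with both Kobayashi length and image controlled; this rests on the homogeneity of $P('z,'\ov z)$ afforded by $h$-extendibility, on the stability of the attraction and engulfing properties recorded in Section 2.3, and on the comparability of the distinguished radii at nearby points.
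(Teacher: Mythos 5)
Your overall scheme — uniform boundary--distance decrease for $F^\nu$, the stable lower bound of Corollary 2.10 near $q$, and an integration along paths reaching the inner normal to get a bound of size $\delta^{1/2m'}$, plus a device to keep the relevant images inside a ball around $q$ — is the same machinery the paper uses, and your first (dyadic) estimate along the normal is fine. But two steps, as written, do not hold up. First, the connecting disc in your second step: you cannot join a general $z\in D_\nu\cap B(0,\delta)$ to $p^\nu_\ast=('0,-\mathrm{dist}(z,\partial D_\nu))$ with uniformly bounded Kobayashi length. Already in the half--plane model, two points at common depth $d$ but horizontal separation $\approx\delta$ have Kobayashi distance $\approx\log(\delta/d)$, which blows up as $z$ approaches the boundary with $\delta$ fixed; likewise the weighted bound $|z_j|\lesssim \mathrm{dist}(z,\partial D_\nu)^{1/m_{n+1-j}}$ is false for a general point near the boundary (with $\delta$ instead of the boundary distance on the right it is true, but that is a different statement). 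What actually saves the estimate is not a bounded Kobayashi length but the finiteness of $\int \mathrm{dist}(\cdot,\partial D_\nu)^{-1+1/2m'}$ along a path whose boundary distance grows linearly in the parameter: the same dyadic summation you carried out on the normal leg must be carried out on the leg joining $z$ to the normal (go inward from $z$ to depth $\approx\delta$, across, then down). This is exactly what the paper encodes in its path $\gamma_\nu$ with $\mathrm{dist}(\gamma_\nu(t),\partial D_\nu)\geq Ct$, combined with the derivative bound $|dF^\nu|\lesssim \mathrm{dist}(\cdot,\partial D_\nu)^{-1+1/2m'}$ obtained from Corollary 2.10 and the distance--decreasing property.

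Second, the localization near $q$: your open--closed argument is run on the wrong set. Knowing only that $F^\nu(z)\in B(q,\eta_0)$ for the single point $z$ does not let you apply the core estimate at $z$, because that estimate integrates the metric bound along the whole connecting path and therefore needs the image of the entire path to lie where Corollary 2.10 is available; so ``the core estimate forces the image into $B(q,\eta_0/2)$'' is not a consequence of membership in your set, and relative closedness does not follow as stated. The continuity argument has to be run along each path, i.e.\ via a first/last--exit time — which is precisely the paper's device: it takes $b_\nu$ on the normal with $F^\nu(b_\nu)\to q$, a putative bad point $a_\nu$ with $|F^\nu(a_\nu)-q|\geq\epsilon_0$, joins them by $\gamma_\nu$, chooses $t_\nu$ so that $F^\nu(\gamma_\nu(t_\nu))\in\partial B(q,2\eta)$ while $F^\nu\circ\gamma_\nu((t_\nu,3s_\nu])\subset B(q,2\eta)$, and integrates the derivative bound only over $(t_\nu,3s_\nu]$, getting $|F^\nu(\gamma_\nu(t_\nu))-F^\nu(b_\nu)|\lesssim s_\nu^{1/2m'}\to 0$, a contradiction. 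With these two repairs your direct argument collapses into essentially the paper's contradiction argument; as it stands, the bounded--Kobayashi--length claim and the point-wise open--closed set are genuine gaps.
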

\noindent {\it Proof.}
Suppose to obtain a contradiction, that the assertion were false. Then there exists $\epsilon_0>0$ and a sequence $a_\nu \in D_\nu$ such that $a_\nu \to 0$ and $\vert F^\nu(a_\nu)-q\vert \geq\epsilon_0$. 
Since every $F^\nu$ is continuous upto the boundary $\partial D_\nu$, we can also choose a sequence of points $b_\nu \in D_\nu$ lying on the common inner normal to $D_\nu$ at the origin i.e., $b_\nu=(0,- \beta _\nu)$ with $ \beta _\nu>0$, such that $b_\nu \to 0$ and $\vert F^\nu(b_\nu)-q\vert \to 0$. 
Let $s_\nu=\vert a_\nu-b_\nu \vert$. It is not difficult to see that there exists a constant $C>0$ such that for all $\nu$, there is a smooth path $\gamma_\nu : [0,3s_\nu] \to D_\nu$ with the following properties:
\begin{enumerate}
\item[(i)] $\gamma _\nu(0)=a_\nu$, $\gamma_\nu(3s_\nu)=b_\nu$
\item[(ii)] ${\rm dist}(\gamma _\nu(t), \partial D_\nu) \geq Ct$, for $t\in[0,s_\nu]$,\\
${\rm dist} (\gamma_\nu(t), \partial D_\nu)\geq Cs_\nu$, for $t\in[s_\nu, 2s_\nu]$,\\ 
${\rm dist}(\gamma_\nu(t), \partial D_\nu)\geq C(3s_\nu - t)$, for $t\in[2s_\nu, 3s_\nu]$, 
\item[(iii)] $\vert d\gamma_\nu(t) / dt \vert \leq C$, for $t\in[0,s_\nu]$.
\end{enumerate}

\noindent By corollary (2.10) there is a positive constant $C$ such that for any $w \in D'_\nu \cap B(q,2\alpha)$ for some $\alpha >0$, and any vector $X \in \mathbb{C}^n$ the lower bound
\begin{equation}
K _{D'_\nu}(w,X) \geq C \; \textrm {dist}(w, \partial D'_\nu)^{-1/2m'} \vert X \vert
\end{equation}
holds for all large $\nu$. Let $\eta=\rm{min} \{ \alpha/4, \epsilon _0/4 \}$. Since $F^\nu(b_\nu)$ lies in $B(q,\alpha)$ for $\nu$ large enough, we can choose $t_\nu\in[0,3s_\nu]$ such that $F^\nu\circ \gamma_\nu(t_\nu)\in \partial B(q,2\eta)$ and $F^\nu \circ \gamma _\nu((t_\nu,3s_\nu])$ is contained in $B(q,2\eta)$. From section 2.1 we know that
\begin{equation}
{\rm dist}(F^\nu(z), \partial D'_\nu) \leq C(R) \; {\rm dist}(z, \partial D_\nu)
\end{equation}
for any $R>0$ and $z\in D_\nu \cap B(0,R)$ with $F^\nu(z) \in D'_\nu \cap (B(q,2\alpha))$. Fix $r>0$ and let $z\in D_\nu \cap B(0,r)$ be such that $F^\nu(z) \in D'_\nu \cap B(q,2\tau)$. Since the Kobayashi metric is decreasing under holomorphic mappings, we get from (2.19) and (2.20) that 
\begin{align*}
K_{D_\nu}(z,X) &\geq K_{D'_\nu} (F^\nu(z), dF^\nu(z)X)\\
&\geq C \; {\rm dist}(F^\nu(z),\partial D'_\nu)^{-1/2m'} \vert dF^\nu(z)X \vert \\
&\geq C \; {\rm dist}(z,\partial D_\nu)^{-1/2m'} \vert dF^\nu(z)X \vert.
\end{align*}

\noindent On the other hand it can be seen that for all $\nu$,
\[
K_{D_\nu} (z,X) \leq \vert X\vert/ \textrm {dist}(z, \partial D_\nu)
\]
and this implies the uniform estimate 
\begin{equation}
\vert dF^\nu (z) \vert \leq C \textrm {dist} (z, \partial D_\nu)^{-1 + 1/2m'}
\end{equation}
for all large $\nu$ and $z\in D_\nu \cap B(0,r)$ such that $F^\nu(z) \in D_\nu \cap B(p,2\alpha)$. Therefore,
\begin{align*}
\vert F^\nu(\gamma^\nu(t_\nu)) - F^\nu(b_\nu) \vert &\leq \int_{t_\nu}^{3s_\nu} \vert dF^\nu(\gamma^\nu(t)) \vert
\vert d\gamma_\nu/ dt\vert dt \\
&\leq C \int_{t_\nu}^{3s_\nu} \textrm {dist} (\gamma_\nu(t), \partial D_\nu)^{-1+1/2m'} dt \\
& \leq Cs_\nu^{1/2m'} \to 0
\end{align*}
as $\nu \to \infty$, which is a contradiction.
\medskip
\begin{proof}[{Proof of theorem 2.11}]
It was just shown that ${F^\nu(z)}$ lies in $B(q,2\alpha)$ for any $z\in \bar{D}_\nu \cap B(0, \delta)$ if $\delta>0$ is chosen small enough. Hence (2.21) holds for any $w=F^\nu(z)$ and a similar integration argument as above, then gives the estimate asserted in the theorem with a uniform constant. 
 \end{proof}

\subsection{Compactness of $f^{-1}(0)$}
Recall that to establish theorem 1.1, we were to prove that $f^{-1}(0)$ is compact in $M$. Suppose to obtain a contradiction that this were false. Then the intersection
\[
f^{-1}(0) \cap M \cap \partial B(0,\epsilon) \neq \phi.
\]
for all $\epsilon>0$ small. Since the scaled mappings differ from $f$ by a biholomorphic change of coordinates on the domain and the target, the same holds for them as well, i.e.,
\[
(F^\nu)^{-1}(F^\nu(0))\cap \partial D_\nu \cap \partial B(0,\epsilon)\neq \phi.
\]
Let us show that this property passes to the limit as well, i.e.,
\[
F^{-1}(q) \cap \partial D_{\infty} \cap \partial B(0,\epsilon)  \neq \phi
\]
for all $\epsilon>0$ small -- so small that Theorem 2.11 holds for $r=\epsilon$. Let $a^\nu$ be in  $(F^\nu)^{-1}(F^\nu(0))\cap \partial D_\nu \cap \partial B(0,\epsilon)$ and $a^\nu \to a\in \partial D_{\infty} \cap \partial B(0,\epsilon)$. Let $b$ be a point in $D_{\infty}$ near $a$. Since $D_\nu \to D_{\infty}$, there exists an integer $N$ such that for all $\nu > N$, $b$ lies in $D^\nu$ and
\[
\vert F(a) - q \vert \leq \vert F(a) - F(b) \vert + \vert F(b)- F^\nu (b) \vert + \vert F^\nu (b) - F^\nu (a^\nu) \vert + \vert F^\nu(a^\nu) - q \vert
\]
By taking $\nu$ large enough, the second term can be made arbitrarily small by the convergence of $F^\nu$ at $b$ and the last term as well by the convergence 
of $F^\nu (a^\nu)=F^\nu(0)$ to $q$. The 
equicontinuity of the family ${\{ F^\nu \}} \cup {\{ F \}}$ given by theorem 2.11, 
then assures that the same can be done with the third and the first terms, by taking $b$ sufficiently 
close to $a$ and $\nu$ larger if necessary. Thus, $F^{-1}(q)$ 
is not compact in any neighbourhood of the origin in $\partial D_{\infty}$. 

\medskip

On the other hand, by starting from the standpoint of $F$ being a holomorphic mapping between algebraic domains that extends continuously upto a 
boundary piece $\Sigma$ of $\partial D_\infty$, we may apply the theorem of Webster to establish the algebraicity of $F$. 
Thus $F$ extends as an analytic set and by the invariance property of Segre varieties (see \cite{CP}) as a locally finite to one holomorphic map near $0 \in \pa D_{\infty}$. 
Contradiction.

\medskip

To make this work, we only have to show that $F$ does not map an open piece of $\pa D_{\infty}$ into the weakly pseudoconvex points on $\pa D'_{\infty}$. Let us denote by 
$w(\Gamma)$ the set of all weakly pseudoconvex points of a given smooth hypersurface $\Gamma$. Since 
\[
\partial D_\infty = \big\{ z \in \mathbb{C}^n \; :\; 2\Re z_n + P('z, ' \bar{z})=0 \big\}
\]
it follows that 
\[
w(\partial D_\infty) = \big( Z \times \mathbb{C} \big) \cap \partial D_\infty
\]
where $Z$ is the real algebraic variety in $\mathbb{C}^{n-1}$ defined by the vanishing of the determinant of the complex Hessian of $P$ in $\mathbb{C}^{n-1}$.  The finite type assumption on $\partial D_\infty$ implies that $w(\partial D_\infty)$ is a real algebraic set of dimension at most $2n-2$. Recall that 
\[
F(\Sigma) \subset \partial D'_\infty = \big \{ z \in\mathbb{C}^n : 2\Re z_n + Q_{2m'}(z_1,\bar{z}_1) + \vert z_2 \vert^2 + \ldots + \vert z_{n-1} \vert^2 =0 \big \},
\] 
and denote $Q_{2m'}$ by $Q$ for brevity.

\begin{prop}
The map  $F$ extends to an algebraic map in $\mathbb{C}^n$.
\end{prop}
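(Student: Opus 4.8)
The plan is to invoke Webster's algebraicity theorem, whose hypothesis requires that $F$ extend continuously to a boundary piece $\Sigma \subset \partial D_\infty$ and that $F$ does not collapse a relatively open piece of $\Sigma$ into the weakly pseudoconvex locus $w(\partial D'_\infty)$ of the target. By Theorem 2.11 we already know $F$ is H\"older continuous up to $\partial D_\infty$ near the origin, so the only thing to verify is the second requirement. Thus the heart of the argument is the following dichotomy: either $F$ is locally constant (impossible, since $F$ is nondegenerate by the discussion at the end of Section 2.1), or else the set $F^{-1}\big(w(\partial D'_\infty)\big)\cap \Sigma$ is a proper real-analytic subset of $\Sigma$, hence nowhere dense.

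First I would record the structure of the two weakly pseudoconvex loci. On the target side, $\partial D'_\infty$ has Levi rank at least $n-2$ everywhere (the $|z_2|^2+\cdots+|z_{n-1}|^2$ block is always positive definite), so a point of $\partial D'_\infty$ is weakly pseudoconvex precisely when the remaining Levi eigenvalue — the one coming from the $z_1$ direction, governed by $Q''(z_1,\bar z_1)=\partial^2 Q/\partial z_1\partial\bar z_1$ — vanishes. Hence $w(\partial D'_\infty) = \{Q''(z_1,\bar z_1)=0\}\cap\partial D'_\infty$, which is a real-algebraic subset of real dimension at most $2n-3$ (the zero set of the nonzero real-analytic, indeed polynomial, function $Q''$ is a proper subvariety of $\mathbb{C}$, as $Q$ has degree $\le 2m'$ and is not harmonic). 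On the domain side, as already noted in the excerpt, $w(\partial D_\infty) = (Z\times\mathbb{C})\cap\partial D_\infty$ with $Z$ the real-algebraic hypersurface $\{\det(\text{complex Hessian of }P)=0\}$ in $\mathbb{C}^{n-1}$, and the finite type of $\partial D_\infty$ forces $Z$ to have real dimension at most $2n-4$, so $w(\partial D_\infty)$ has real dimension at most $2n-3<2n-1=\dim_{\mathbb{R}}\partial D_\infty$.

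Next I would argue that $F$ cannot send an open piece of $\Sigma$ into $w(\partial D'_\infty)$. Suppose it did, on some relatively open $\Sigma_0\subset\Sigma$. Then $Q''\big((F_1)|_{\Sigma_0}\big)\equiv 0$, where $F_1$ is the first component of $F$. Since $F$ extends holomorphically (in fact, we only need continuity plus the reflection/algebraicity machinery, but here I want to exploit analyticity of $F$ on $D_\infty$), the composition $Q''\circ F_1$ is real-analytic on $\Sigma_0$ and CR; a CR function on a real hypersurface of finite type that vanishes on an open set vanishes identically near that piece, so $Q''\circ F_1\equiv 0$ on a full neighbourhood in $\overline{D_\infty}$, forcing $F_1$ to take values in the finite set $\{Q''=0\}\subset\mathbb{C}$, hence $F_1$ is constant there and (by the identity principle) constant on $D_\infty$. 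Plugging this back into the target defining equation, the image $F(D_\infty)$ would have to lie in a translate $\{2\Re z_n + |z_2|^2+\cdots+|z_{n-1}|^2 = \text{const}\}$, which is a \emph{strongly pseudoconvex} hypersurface in the remaining $n-1$ variables; but then $r'_\infty\circ F$ would be a plurisubharmonic function on $D_\infty$ whose zero set contains interior points, contradicting the nondegeneracy of $F$ established earlier (the cluster set of a finite boundary point of $D_\infty$ was shown not to meet $D'_\infty$). This contradiction shows $F(\Sigma_0)\not\subset w(\partial D'_\infty)$ for every relatively open $\Sigma_0$, so $\Sigma\setminus F^{-1}(w(\partial D'_\infty))$ is dense, and restricting $\Sigma$ if necessary we may assume $F(\Sigma)$ meets the strongly pseudoconvex part of $\partial D'_\infty$. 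Webster's theorem (\cite{W}) then applies verbatim: $F$, being a holomorphic map between the algebraic domains $D_\infty$ and $D'_\infty$ that is continuous up to the strongly pseudoconvex (and hence essentially finite, Levi-nondegenerate) piece $\Sigma$, must be the restriction of an algebraic map of $\mathbb{C}^n$.

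The main obstacle I anticipate is the book-keeping in the step ruling out $F_1\equiv\text{const}$: one must be careful that ``$F$ nondegenerate'' as proven in Section 2.1 really does preclude $F$ mapping into a lower-dimensional strongly pseudoconvex slice, and one must make sure that the finite-type propagation argument for CR functions is legitimately applicable at the (a priori only H\"older, not smooth) boundary values of $F$ — this is where one leans on the holomorphic extension of $F$ to $D_\infty$ together with the boundary regularity from Theorem 2.11, rather than on smoothness of $F$ up to $\partial D_\infty$, which is not yet available.
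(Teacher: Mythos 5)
Your overall plan --- reduce to showing that $F$ cannot send a relatively open piece of the strictly pseudoconvex part $\Sigma \subset \partial D_\infty$ into $w(\partial D'_\infty)$, then conclude algebraicity via the reflection principle and Webster's theorem --- is the same as the paper's, but the step that carries all the weight fails as written. The function you propagate, $\Delta Q(F_1,\overline{F_1})$ (your $Q''\circ F_1$), is real valued and involves $\overline{F_1}$, so it is \emph{not} a CR function, and no identity or propagation theorem for CR functions lets you pass from its vanishing on $\Sigma_0$ to vanishing on a full neighbourhood in $\overline{D_\infty}$. Relatedly, $\{\Delta Q=0\}$ is a real-algebraic subset of $\mathbb{C}$ that is in general a curve, not a finite set (the paper's proof stratifies $w(\partial D'_\infty)$ precisely along the regular and singular points of this curve $V(\Delta Q)$); that slip alone could be repaired by the open mapping theorem, but the propagation step cannot. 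Finally, even granting $F_1\equiv c$, your contradiction is a non sequitur: the image of the open set $D_\infty$ then lies in the \emph{open} slice $D'_\infty\cap\{z_1=c\}$, not in the hypersurface $\{2\Re z_n+\vert z_2\vert^2+\cdots+\vert z_{n-1}\vert^2=\mathrm{const}\}$, so $r'_\infty\circ F$ is just a negative plurisubharmonic function and nothing contradicts the nondegeneracy from Section 2.1, which only says that cluster sets of finite boundary points avoid $D'_\infty$; a degenerate limit with $F_1$ constant is not excluded by anything you have used.

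The paper closes exactly this gap by a different device: it stratifies $w(\partial D'_\infty)=\{\Delta Q=0\}\cap\partial D'_\infty$ into a generic stratum $S_{2n-2}$ (locally $\partial\mathbb{B}^{n-1}\times\mathrm{Reg}(V)$) and a smaller stratum $S_{2n-3}$, and, assuming an open piece $\tilde\Sigma$ is mapped into $S_{2n-2}$, uses the Diederich--Pinchuk argument: choose a complex $2$-plane $L_{F(b)}$ meeting $w(\partial D'_\infty)$ in a totally real $2$-manifold $S_{F(b)}$, construct a nonnegative strictly plurisubharmonic $\phi_{F(b)}$ vanishing on it, and apply the maximum principle to $\phi_{F(b)}\circ F$ on the positive-dimensional analytic set $A_b=\{z_n=b_n\}\cap F^{-1}(L_{F(b)})$ with $\partial A_b\subset\tilde\Sigma$; this forces an interior analytic set into $\partial D'_\infty$, the desired contradiction, with the modification $\vert p(z_1)\vert^2$ in place of $\vert \Im z_1\vert^2$ for the stratum $S_{2n-3}$. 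Note also that in the favourable case the paper does not feed a merely continuous boundary map to Webster's theorem: it first upgrades $F$ near a strictly pseudoconvex point to a smooth CR diffeomorphism by Pinchuk--Tsyganov and extends it biholomorphically across the boundary by the reflection principle, and only then invokes Webster; your proposal would need the same intermediate regularity step.
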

\begin{proof}[{Proof.}]
Suppose now that there exists a strictly pseudoconvex point $a \in \partial D_\infty$ such that $F(a)$ is also a strictly pseudoconvex point in $\partial D'_{\infty}$. Then by \cite{PT}, $F$ is a smooth CR-diffeomorphism near $a$ and extends locally biholomorphically across $a$ by the reflection principle in \cite{P1}. Then Webster's theorem \cite{W} assures us that $F$ is algebraic. So, we may as well let $\Sigma$ be the set of strictly pseudoconvex points, assume that $F$ maps it into $w(\partial D'_{\infty})$ and argue only to obtain a contradiction. As before note that 
\[   
w(\partial D'_{\infty}) = \big\{ z\in \mathbb{C}^n \; : \; \Delta Q(z_1,\bar{z}_1) =0 \big\} \cap \partial D'_{\infty}.
\]
Then $w(\partial D'_\infty)$ admits a semi-analytic stratification by real analytic manifolds of dimension $2n-2$ and $2n-3$. Using the specific form of $\partial D'_\infty$, this can be explicitly described as follows. Let us denote by $V=V(\Delta Q)$ the real algebraic variety in $\mathbb{C}$ defined by the polynomial $\Delta Q$. The set of singular points $\textrm{Sng}(V) $, near which $V$ may fail to be a smooth curve is finite. 
Let $a\in w(\partial D'_\infty)$ be such that $\pi_1(a) \in \textrm{Reg} (V)$ where $\pi_1$ is the natural surjection onto the $z_1$-axis. Then $V$ is a smooth curve near $\pi_1(a)$ and after a biholomorphic change of coordinates, we may assume that in its vicinity $V$ coincides with $\{ \Im z_1 =0\}$. Note that the fibre of $\pi_1$ over $a$ in $w(\partial D'_\infty)$ namely $\pi_1^{-1}(\pi_1(a)) \cap w(\partial D'_\infty)$, is given by 
\[
\Big\{ z \in \mathbb{C}^n  :  2\Re z_n + \vert z_2 \vert^2 + \ldots + \vert z_{n-1} \vert^2 = -Q( \pi_1(a), \overline{\pi_1(a)} ) \Big\} 
\cap \Big\{ z \in \mathbb{C}^n : z_1 = \pi_1(a) \Big\}
\]
which is evidently equivalent to $\partial \mathbb{B}^{n-1}\subset \mathbb{C}^{n-1}$. This description of the fibres of $\pi_1$ restricted to $w(\partial D'_\infty)$ persists in a neighbourhood of $a$ and then the real analytic strata $S_{2n-2} \subset w(\partial D'_\infty)$ of dimension $2n-2$ is locally biholomorphic to 
$\partial \mathbb{B}^{n-1} \times \{ \Im z_1 =0 \}$. The complement of $S_{2n-2}$ in $w(\partial D'_\infty)$, which we will denote by $S_{2n-3}$ is locally biholomorphic to $\partial \mathbb{B}^{n-1} \times \textrm{Sng}(V)$ and this evidently has dimension $2n-3$. Observe also that $S_{2n-2}$ is generic since its complex tangent space has dimension $n-2$ as a complex vector space. Now, if 
\[
F(\Sigma) \cap S_{2n-2} \neq \phi
\]
then by the continuity of $F$, there is an open piece of $\Sigma$ that is mapped by $F$ into $S_{2n-2}$. Denote this piece by $\tilde{\Sigma}$. We may assume that $0 \in \tilde{\Sigma}$ and that $F(0)=0 \in S_{2n-2}$. Using an idea from Lemma 3.2 in \cite{DP2}, let $L$ be a 2 dimensional complex plane that intersects $w(\partial D'_\infty)$ in a totally real submanifold of real dimension 2 -- this is possible by the genericity of $S_{2n-2}$ -- and this also holds for all translates $L_{a'}$ of $L$ passing through $a'$ in a sufficiently small neighbourhood $U'$ of the origin. We may therefore find a non-negative, strictly plurisubharmonic function $\phi_{a'}$ on $U'$ that vanishes on 
\[
S_{a'}=L'_{a'} \cap w(\partial D'_{\infty}) \cap U'.
\]
Indeed by a change of coordinates, we may assume that
\[ 
L= \textrm{span} _{\mathbb{C}}  \left< \partial / \partial  z_1, \partial / \partial z_n \right> =\{ z_2= \ldots =z_{n-1}=0 \} 
\]
and then
\[
\phi_{a'}= \vert z_2 - a'_2 \vert^2 + \ldots + \vert z_{n-1}- a'_{n-1} \vert^2 + \vert \Im z_1 \vert^2 + (r'_{\infty}(z,\bar{z}))^2
\]
furnishes an example.
By the continuity of $F$ and the openness of $\tilde{\Sigma}$ in $\partial D_\infty$, we can pick $b\in D_\infty$ so near the origin that $F(b) \in U'$ and $\partial A_b \subset \tilde{\Sigma}$ where 
\[
A_b=\{ z\in \mathbb{C}^n : z_n=b_n \} \cap F^{-1}(L_{F(b)})
\]
Since the pull-back of an analytic set under a holomorphic map is again analytic of no lesser dimension, $A_b$ is a positive dimensional analytic set. Also, $b \in A_b$ and $ F( \partial A_b) \subset S_{F(b)}$. Therefore, $\psi_b =\phi_{F(b)} \circ F $ is a non-negative, plurisubharmonic funtion on $A_b$ that vanishes on $\partial A_b$. By the maximum  principle $\psi_b\equiv 0$ on $A_b$ which implies that $F$ maps all of $A_b$ into $S_{F(b)}$. Since $F$ maps $D_\infty$ into $D' _\infty$, this is a contradiction.\medskip\\
To finish, note that the the remaining possibility is $F(\Sigma) \subset S_{2n-3}$. The above argument can be repeated in this case as well -- we will only need to replace $\vert \Im z_1\vert^2$ as a subharmonic function vanishing on a curve-segment of $\textrm{Reg}(V)$ by $\vert p(z_1) \vert^2 $ where $p$ is a holomorphic polynomial that vanishes on the finite set $\textrm{Sng}(V)$.
\end{proof}


\section{Proof of theorem 1.3}
\noindent We begin with the scaling template for $(D,D',f)$ in section 2.1, maintaining as far as possible the notations therein. For clarity and completeness, let us briefly describe the scaling of $D'$ which is simpler this time as $M'$ is strongly pseudoconvex. As before, let $p^\nu=('0, -\delta_\nu) \in D$ and note that $p'^\nu = f(p^\nu)$ converges to the origin which is  a strongly pseudoconvex point on $\partial D'$. Let $w^\nu \in \partial D'$ be such that 
\[
\vert w^\nu - p'^\nu \vert = \textrm{dist} (p'^\nu, \partial D') = \gamma_\nu
\]
Furthermore, since $\partial D'$ is strongly pseudoconvex near the origin, we may choose a strongly plurisubharmonic function in a neighbourhood of the origin that serves as a defining function for $D'$. Arguing as in Section 2.1, it follows that $f$ preserves the distance to the boundary, i.e.,
\[
\delta_\nu \approx \gamma_\nu
\]
for $\nu \gg 1$. For each $w'^\nu$, lemma 2.2 in \cite{P2} provides a degree two polynomial automorphism of $\mathbb{C}^n$ that firstly, transfers $w '^\nu$ and the normal to $\partial D'$ there, to the origin and the  $\Re z_n$-axis respectively and secondly, ensures that the second order terms in the Taylor expansion of the defining function $r' \circ (g^\nu)^{-1}$ of the domain $g^\nu (D')$ constitute a hermitian form that coincides with the standard one i.e., $\vert z_1 \vert^2 + \ldots + \vert z_{n-1} \vert^2 $, upon restriction to the complex tangent space. Define the dilations
\[
B^\nu ('z, z_n)= ( \gamma ^{-1/2}_\nu \; {'z}, \gamma _\nu^{-1}z_n  )
\]
and note that the scaled domains $D'_\nu = (B^\nu \circ g^\nu)  (D')$ are defined by 
\[
\gamma _\nu^{-1} r'_\nu (z) = 2 \Re(g^\nu(z))_n + \vert (g^\nu(z))_1 \vert^2+\ldots+\vert (g^\nu(z))_{n-1}\vert^2 + O(\gamma _\nu ^{1/2}).
\]
These converge in the Hausdorff metric to 
\[
D'_\infty = \mathbb{H} = \{ z\in \mathbb{C}^n : 2\Re z_n + \vert z_1 \vert^2 + \ldots + \vert z_{n-1} \vert^2 \ <0 \},
\]
which is the unbounded manifestation of the ball, in view of the fact that the $g^\nu$'s converge uniformly on compact subsets of $\mathbb{C}^n$ to the identity map. \medskip\\
Standard arguments as in \cite{P2} show that the scaled maps 
\[
F^\nu = B^\nu \circ g^\nu \circ f \circ (T^\nu)^{-1}
\]
converge to a map $F: D_\infty \to \overline{\mathbb{H}}$. If some point $z_0$ of $D_\infty$ is sent by $F$ to $w_0 \in \partial\mathbb{H} \cup \{\infty\}$, then composing $F$ with a local peak function at $w_0$, we get a function holomorphic on a neighbourhood of $z_0$ and peaking precisely at $z_0$. By the maximum principle, $F(z)\equiv w_0$. However, $F^\nu('0,-1)= B^\nu \circ g^\nu \circ f \circ T^\nu('0,-1)= B^\nu \circ g^\nu(f(p^\nu)) = B^\nu('0, -\gamma_\nu)=('0,-1)$. Hence, $F('0,-1)=('0,-1)\neq w_0$. Thus, $F$ maps $D_\infty$ into $D'_\infty$  and is again as in section 2, a non-degenerate, locally proper map extending continuously upto the boundary in a neighbourhood of the origin. By theorem 2.1 in \cite{CP}, $F$ extends holomorphically across the origin. By composing with a suitable automorphism of the ball, we may also assume $F(0)=0$. Then, $\Re(F_n(z))$ is a pluriharmonic function that is negative on $D_\infty$ and attains a maximum at the origin. So, by the Hopf lemma, we must have $\alpha = \partial(\Re F_n)/\partial x_n(0) > 0$ which combined with the fact that $DF$ preserves the complex tangent space and thereby the complex normal at the origin (to the hypersurfaces $\partial D_\infty$ and $\partial D'_\infty$, which themselves correspond under $F$ near the origin), implies that 
\[
F_n(z)= \alpha z_n + g(z)
\]
for some holomorphic function $g$ with $g(z)=o(\vert z \vert)$. Now, let us compare the two defining functions 
for $D_\infty$, near the origin:
\[
2 \Re (F_n(z)) + \vert F_1(z) \vert^2+ \ldots + \vert F_{n-1}(z)\vert^2 
= h(z,\bar{z})(2\Re z_n + P('z,'\bar{z}))
\]
for a non-vanishing real analytic function $h(z,\bar{z})$. 
Contemplate a weighted homogeneous expansion of the above equation with respect to the weight $(1/m_n, \ldots , 1/m_2)$ given by the multitype of $\partial D_\infty$. Note firstly that on the left, pluriharmonic terms arise precisely from $\Re(F_n(z))$. Next, note that the lowest possible weight for any term on the right is one and the non-pluriharmonic component of this weight is $h(0)P('z,'\bar{z})$. What this means for the left, is that each $F_j$ must expand as
\[
F_j(z)=P_j('z) + \text{ (terms of weight} > 1/2)
\]
where each $P_j$ is either weighted homogeneous of weight $1/2$ or identically zero and
\begin{equation}
h(0)P('z,'\bar{z}) = \vert P_1('z) \vert^2 + \ldots + \vert P_{n-1} ('z) \vert^2
\end{equation}
Clearly, all the $P_j$'s cannot be zero as $P$ is non-zero. In fact, the finite type character of $\partial D_\infty$ forces all of them to be non-zero, as follows. After a rearrangement if necessary, assume that 
$P_j \in \mathbb{C}['z] = \mathbb{C}[z_1,\ldots,z_{n-1}]$ is non-zero precisely when $1\leq j \leq m \leq n-1$. Then the common zero set $V \subset \mathbb{C}^{n-1}$, of these $P_j$'s gives rise to the complex analytic variety $i(V)$ in $\partial  
D_\infty$ where $i: \mathbb{C}^{n-1} \to \mathbb{C}^n$ is the natural inclusion. The finite type constraint compels this variety and needless to say $V$, to be discrete. Furthermore, the weighted homogeneity reduces it to \{0\} : If $(z_1, \ldots ,z_{n-1})$ is a non-trivial zero of the $P_j$'s and $t\in \mathbb{C}$, then 
\[
P_j(e^{t/m_n}z_1, \ldots , e^{t/m_2} z_{n-1}) = e^{t/2} P_j(z_1,\ldots,z_{n-1}) =0
\]
for all $1 \leq j \leq m$ and so the entire curve defined by
\[
\gamma(t) = (e^{t/m_n}z_1,\ldots,e^{t/m_2}z_{n-1})
\]
lies inside $V$. Now, consider the ideal $I$ generated by these polynomials $P_1, \ldots, P_m$, which is a $\mathbb{C}$-algebra whose transcendence degree cannot exceed $m \leq n-1$. On the other hand, by the Nullstellensatz, for a large integer $N$ the algebraically independent monomials, $z_1^N, \ldots, z_{n-1}^N$ must all lie in $I$, forcing its transcendence degree over $\mathbb{C}$ and hence $m$ to equal $n-1$. The upshot therefore, is that $P$ is the squared norm of a weighted homogeneous polynomial endomorphism $\hat{P}$ of $\mathbb{C}^{n-1}$ with $\hat{P}^{-1}(0)=0$. 
Now, put $z_2= \ldots =z_{n-1}=0$ in (3.1). This cannot reduce the left hand side there to zero, otherwise the $z_1$-axis will lie inside the zero set of $P$ and hence in $\partial D_\infty$. What this means for the right hand side of (3.1), is that terms involving $z_1$ (and $\bar{z}_1$) must occur. Since all polynomials therein are homogeneous of same weight, we conclude that terms involving $z_1,\bar{z}_1$ there, must only be of the form $c\vert z_1 \vert^{m_n}$ for some $c>0$. Needless to say, the same holds for all the other variables $z_j$ for $2\leq z_j \leq n-1$, as well and we have the last statement of theorem 1.3, namely
\[
P('z, '\bar{z})= c_1 \vert z_1 \vert ^{m_n} + c_2 \vert z_2 \vert^{m_{n-1}} + \ldots + c_{n-1} \vert z_{n-1} \vert^{m_2} + \text{mixed terms}
\]
with all $c_j$'s being positive and the mixed terms comprising of weight one monomials in $'z, '\bar{z}$ each of which is annihilated by at least one of the natural 
quotient maps $\mathbb{C}['z,'\bar{z}] \to \mathbb{C}['z, '\bar{z}]/(z_j \bar{z}_k)$, $1 \leq j, k \leq n-1$ where $j \not= k$.


\section {Proof of theorem 1.4}
\begin{proof}
\noindent As before by the lower semi-continuity of rank, $\partial D'$ is of rank at least $n-2$ in a neighbourhood $\Gamma' \subset \partial D'$ of $p'$ which we may assume also to be of finite type and pseudoconvex, consequently regular. Let us apply Theorem A of \cite{BS} to the proper holomorphic correspondence $f^{-1} : D' \to D$. The hypothesis on $cl_{f^{-1}}(\Gamma')$ there holds, since $\partial D$ is globally regular. Therefore by that theorem, $f^{-1}$ extends continuously upto $\Gamma'$ as a proper correspondence.  However, we are not certain of the splitting of $f^{-1}$ near $p$ into branches -- for that $p$ we will have to lie away from the branch locus of $f^{-1}$. Nevertheless, there  exist neighbourhoods $U'$ of $p'$ and $U$ of $p$ and a (local) correspondence
\[
f^{-1}_{loc} : U' \cap D' \to U \cap D,
\] 
extending continuously upto the boundary such that the graph of $f^{-1}_{loc}$ is contained in that of $f^{-1}$ and $cl_{f^{-1}_{loc}}(p') = \{ p \}$ where the last condition comes from the fact that $f$ is finite to one upto the boundary, by that theorem again. Assume that both $p=0$ and $p'=0$ and choose a sequence $p'^\nu=('0,-\delta_\nu)\in D' \cap U'$ on the inner normal approaching the origin. Since $f:D \to D'$ is proper and $0 \in cl_f (0)$ there exists a sequence $p^\nu \in D$ with $p^\nu \to 0$ such that $f(p^\nu)=p'^\nu$. Moreover, by the continuity of $f^{-1}_{loc}$ upto the boundary and the condition $cl_{f^{-1}_{loc}} (0) = \{0 \}$, we may assume after shrinking $U,U'$ if necessary, that $p^\nu \in D \cap U$ with $f^{-1}_{loc}(p'^\nu)=\{ p^\nu \}$. Now scale $D$ with respect to $\{ p^\nu \}$ and $D'$ with respect to $\{p'^\nu\}$ -- to scale $D'$, we only consider the dilations 
\[
T^\nu(z_1,\ldots,z_n)=\big((\tau_1^\nu)^{-1}z_1,(\tau_2^\nu)^{-1}, \ldots, (\tau_n^\nu)^{-1}z_n \big)
\]
where $\tau_1^\nu=\tau(0,\delta_\nu)$, $\tau_j^\nu=\delta_\nu^{1/2}$ for $2\leq j \leq n-1$ and $\tau_n^\nu = \delta_\nu$, while for $D$ we use the composition $B^\nu \circ g^\nu$ where $g^\nu$ and $B^\nu$ are as in section 3. As before, the limiting domains for $D_\nu=(B^\nu \circ g^\nu)(U \cap D)$ and $D_\nu'= T^\nu(U' \cap D')$ are the ball $\mathbb{B}^n$ and 
\[
D'_\infty = \big\{z\in \mathbb{C}^n \; : \; 2 \Re z_n + Q_{2m'}(z_1, \bar{z}_1) + \vert z_2\vert^2 +\ldots + \vert z_{n-1} \vert^2 <0 \big \},
\]
respectively, where this time $Q_{2m'}$ is homogeneous of degree $2m'$ and coincides with the polynomial of this degree in the (homogeneous) Taylor expansion of the defining function $r'$ for $\partial D'$ near the origin. Normality of the scaled mappings 
\[
F^\nu = T^\nu \circ f \circ (B^\nu \circ g^\nu)^{-1}
\]
follows as before by proposition 2.1. But section 2 can no longer guarantee nondegeneracy of a limit map, owing to the lack of a clear cut boundary distance conservation property of $f$. However, the existence of the inverse as a proper correspondence paves the way for a different approach. To begin with, recall from \cite{KP}, the notion of normality for correspondences and theorem 3 therein, the version of Montel's theorem for proper holomorphic correspondences with varying domains and ranges. Let $K_\mu$ be an exhaustion of $D'_\infty$ by compact subsets containing $('0,-1)$. To establish the normality of scaled correspondences 
\[
(f_{loc}^{-1})^\nu = (B^\nu \circ g^\nu) \circ f_{loc}^{-1} \circ (T^\nu)^{-1},
\]
it suffices by theorem 3 of \cite{KP} to show that $(f_{loc}^{-1})^\nu(K_\mu) \Subset D_\infty$ for each $\mu \in \mathbb{N}$. To this end, fix a $K_\mu$, let $(\tilde{f}_{loc}^{-1})^\nu$ denote the correspondence $f^{-1}_{loc} \circ (T^\nu)^{-1}$ and note that $(\tilde{f}^{-1}_{loc})^\nu (K_\mu)$ is connected (since any of its components must contain $p^\nu$). Now (viewing the origin in $\partial D$ as the sequence $p^\nu$ that approaches it from the interior of $D$), we recall the Schwarz lemma for correspondences given by theorem 1.2 of \cite{V}, which assures us that the images of $K_\mu$ under these correspondences $(\tilde{f}^{-1}_{loc})^\nu$, will be contained in Kobayashi balls about $p^\nu$ of a fixed size, i.e., there exists $R>0$ independent of $\nu$ such that 
\[
(\tilde{f}_{loc}^{-1})^\nu  (K_\mu) \subset B_D^K(p^\nu,R),
\]
where $B_\Omega^K(p,R)$ denotes the Kobayashi ball centered at $p\in \Omega$ of radius $R$, for any given domain $\Omega$. Then, since biholomorphisms preserve Kobayashi balls,
\[
(f^{-1}_{loc})^\nu  (K_\mu) \subset (B^\nu \circ g^\nu)((\tilde{f}^{-1}_{loc})^\nu (K_\mu)) \subset (B^\nu \circ g^\nu)(B_D^K(p_\nu,R)) = B_{D_\nu}^K \big(('0,-1),R \big)
\]
while $B_{D_\nu}^K \big( ('0,-1),R \big) \subset B_{\mathbb{B}^n}^K \big(('0,-1),R + 1 \big)$ for all $\nu$ large by lemma 4.4 of \cite{MV}, giving the stability of the images of 
$K_\mu$ under scaling.  Noting that $\big(('0,-1),('0,-1)\big) \in \textrm{Graph}(f^{-1}_{loc})^\nu$ for all $\nu$, we conclude that $\{ (f^{-1}_{loc})^\nu \}$ must admit a 
subsequence that converges to a correspondence that is inverse to $F$, where $F:\mathbb{B}^n \to D'_\infty$ is a limit of $\{ F^\nu \}$ (which means that the composition of these 
correspondences \textbf{contains} the graph of the identity). By \cite{BS}, both $F$ and the inverse correspondence $F^{-1}$ extend continuously upto the respective boundaries 
and in particular $F$ is finite to one on the boundary. 
Then, $F$ is smooth upto the boundary by \cite{BC}, preserves boundaries and then is  
algebraic again as in section 2.6. and thereafter by \cite{BB2} extends holomorphically past the boundary. 
Now recall that the determinant of the Levi forms are related as 
\begin{equation*}
\lambda_{\mathbb{B}^n} (z) = \lambda_{D'_\infty}(F(z)) \vert J_F(z) \vert^2
\end{equation*}
for all $z\in \partial \mathbb{B}^n$, which readily gives the strict pseudoconvexity of $\partial D'_\infty$. In particular therefore, $Q_{2m'}(z_1, \bar{z}_1)$ must be $\vert z_1 \vert^2$ which gives the strict pseudoconvexity of $ p'$ in $\partial D'$. 
\end{proof}
\medskip
\begin{proof}[Proof of corollary 1.5]
By the proof of Theorem 1.1 in section 2, we have that $f^{-1}(f(p))$ is compact in $M$ and as described in \cite{BC}, it is possible to choose neighbourhoods $U$ of $p$ and $U'$ of $p'$ in $\mathbb{C}^n$ such that if $D$ and $D'$ are the pseudoconvex sides of $U \cap M$ and $U' \cap M'$ respectively then $f$ extends to be a proper map from $D$ into $D'$, putting us in the situation of theorem 1.4.
\end{proof}

\end{document}